\DeclareFontFamily{U}{wncy}{}
    \DeclareFontShape{U}{wncy}{m}{n}{<->wncyr10}{}
    \DeclareSymbolFont{mcy}{U}{wncy}{m}{n}
    \DeclareMathSymbol{\Sha}{\mathord}{mcy}{"58} 
\newcounter{theorem}[section]
\renewcommand{\thetheorem}{{\thesection.\arabic{theorem}}}
\theoremstyle{definition}
\theoremstyle{remark}
\newtheorem{remark}[theorem]{Remark}
\newcounter{problem}
\newenvironment{aside}%
{\begin{mdframed}[style=asidestyle]}%
	{\end{mdframed}}
\NewDocumentEnvironment{proofshift}{s}%
{\IfBooleanF{#1}{\vspace{-.7em}}\begin{mdframed}[style=proofstyle]\begin{proof}}%
		{\end{proof}\end{mdframed}}
\NewDocumentEnvironment{problem}{o o o}
{
	\begin{mdframed}[style=problemstyle, 
		\IfValueT{#3}{#3,}
		frametitlefont=\normalfont,
		frametitle={\textbf{
				\IfNoValueTF{#2}
				{Problem~\theproblem.}
				{\pdfmarkupcomment[opacity=0,color=green!5]{\textbf Problem~\theproblem.}{#2}}
			}
			\IfValueT{#1}{\hfill #1}}
		]
	}
	{\end{mdframed}
}
\newaliascnt{definition}{theorem}
\NewDocumentEnvironment{definition}{o}
{
	\begin{mdframed}[style=defstyle, frametitle={
			\IfNoValueTF{#1}
			{Definition~\thedefinition.}
			{\pdfmarkupcomment[opacity=0]{Definition~\thedefinition.}{#1}}
		}]
	}
	{\end{mdframed}
}
\newaliascnt{example}{theorem}
\NewDocumentEnvironment{example}{o}
{
	\begin{mdframed}[style=exstyle, frametitle={
			\IfNoValueTF{#1}
			{Example~\theexample.}
			{Example~\theexample :~ {#1}.}
		}]
	}
	{\end{mdframed}
}
\NewDocumentEnvironment{theorem}{s o} 
{
    \refstepcounter{theorem}
	\begin{mdframed}[style=thmstyle, frametitle={
			\IfNoValueTF{#2}
			{Theorem~\thetheorem.}
			{Theorem~\thetheorem :~ {#2}}
		},
		beforesingleframe={\IfBooleanT{#1}
			{
				\tikz[remember~picture, overlay]{\fill[fill=magenta!3] 
					(0,10pt) rectangle (\textwidth,-10pt);}  
		}}
		]
	}
	{\end{mdframed}}
\newaliascnt{lemma}{theorem}
\NewDocumentEnvironment{lemma}{s o} 
{
    \refstepcounter{lemma}
	\begin{mdframed}[style=thmstyle, frametitle={
			\IfNoValueTF{#2}
			{Lemma~\thelemma.}
			{Lemma~\thelemma :~ {#2}.}
		},
		beforesingleframe={\IfBooleanT{#1}
			{
				\tikz[remember~picture, overlay]{\fill[fill=magenta!3] 
					(0,10pt) rectangle (\textwidth,-10pt);}  
		}}
		]
	}
	{\end{mdframed}}
\newaliascnt{proposition}{theorem}
\NewDocumentEnvironment{proposition}{s o} 
{
    \refstepcounter{proposition}
	\begin{mdframed}[style=thmstyle, frametitle={
			\IfNoValueTF{#2}
			{Proposition~\theproposition.}
			{Proposition~\theproposition :~ {#2}.}
		},
		]
	}
	{\end{mdframed}}
\newaliascnt{corollary}{theorem}
\NewDocumentEnvironment{corollary}{s o} 
{
    \refstepcounter{corollary}
	\begin{mdframed}[style=thmstyle, frametitle={
			\IfNoValueTF{#2}
			{Corollary~\thecorollary.}
			{Corollary~\thecorollary :~ {#2}.}
		},
		beforesingleframe={\IfBooleanT{#1}
			{
				\tikz[remember~picture, overlay]{\fill[fill=magenta!3] 
					(0,10pt) rectangle (\textwidth,-10pt);}  
		}}
		]
	}
	{\end{mdframed}}
\newaliascnt{conjecture}{theorem}
\NewDocumentEnvironment{conjecture}{s o} 
{
    \refstepcounter{conjecture}
	\begin{mdframed}[style=thmstyle, frametitle={
			\IfNoValueTF{#2}
			{Conjecture~\theconjecture.}
			{Conjecture~\theconjecture :~ {#2}.}
		}
		]
	}
	{\end{mdframed}}
\newcommand{\probbox}[2][]{\tikz[overlay]\node[draw=green!55!black,fill=green!5,inner sep=2pt, anchor=text, rectangle, rounded corners=1mm,#1] {#2};\phantom{#2}}
\newcommand{\pref}[1]{\probbox{\hyperref[#1]{Problem \ref*{#1}}}}
\newcommand{\defbox}[2][]{\tikz[overlay]\node[draw=blue,fill=blue!5,inner sep=2pt, anchor=text, rectangle, rounded corners=1mm,#1] {#2};\phantom{#2}}
\newcommand{\dref}[1]{\defbox{\hyperref[#1]{Definition \ref*{#1}}}}
\newcommand{\thmbox}[2][]{\tikz[overlay]\node[draw=magenta!55!black,fill=magenta!3,inner sep=2pt, anchor=text, rectangle, rounded corners=1mm,#1] {#2};\phantom{#2}}
\NewDocumentCommand{\tref}{m o}{%
	\thmbox{%
		\hyperref[#1]{\IfValueTF{#2}{#2}{Theorem \ref*{#1}}}%
	}%
}
\newcommand{\newword}[1]{\textbf{#1}}
\newcommand{\NN}{\mathbb{N}}
\newcommand{\QQ}{\mathbb{Q}}
\newcommand{\RR}{\mathbb{R}}
\newcommand{\ZZ}{\mathbb{Z}}
\newcommand{\cA}{\mathcal{A}}
\newcommand{\cB}{\mathcal{B}}
\newcommand{\cC}{\mathcal{C}}
\newcommand{\cL}{\mathcal{L}}
\newcommand{\cR}{\mathcal{R}}
\newcommand{\cS}{\mathcal{S}}
\newcommand{\tA}{\widetilde{A}}
\newcommand{\tB}{\widetilde{B}}
\newcommand{\tC}{\widetilde{C}}
\newcommand{\tD}{\widetilde{D}}
\newcommand{\tS}{\widetilde{S}}
\newcommand{\tT}{\widetilde{T}}
\newcommand{\tpi}{\widetilde{\pi}}
\newcommand{\tN}{\widetilde{N}}
\newcommand{\talpha}{\widetilde{\alpha}}
\newcommand{\be}{\mathbf{e}}
\newcommand{\tbe}{\widetilde{\mathbf{e}}}
\renewcommand{\aa}{{\mathfrak{a}}} 
\newcommand{\bb}{{\mathfrak{b}}}
\newcommand{\taa}{{\widetilde{\mathfrak{a}}}} 
\newcommand{\rind}[1]{\mathopen{\boldsymbol{(}}#1\mathclose{\boldsymbol{)}}}
\newcommand{\LowerArcs}{\mathsf{LowerArcs}}
\newcommand{\UpperArcs}{\mathsf{UpperArcs}}
\newcommand{\tLowerArcs}{\widetilde{\mathsf{LowerArcs}}}
\newcommand{\LowerWalls}{\mathsf{LowerWalls}}
\newcommand{\JIrrc}{\mathsf{JIrr}^c}
\newcommand{\MIrrc}{\mathsf{MIrr}^c}
\newcommand{\JI}{cJI\xspace}
\newcommand{\JIs}{cJIs\xspace}
\newcommand{\Arcs}{\mathsf{Arcs}}
\newcommand{\Tot}{\mathrm{Tot}}
\newcommand{\TTot}{\mathrm{TTot}}
\newcommand{\WO}{\mathsf{WO}}
\newcommand{\Dyer}{\mathsf{Dyer}}
\newcommand{\precdot}{\prec\mathrel{\mkern-5mu}\mathrel{\cdot}}
\title{Extended weak order for the affine symmetric group}
\author{Grant T. Barkley}
\date{}
\begin{document}

\maketitle

\begin{abstract}
    The extended weak order on a Coxeter group $W$ is the poset of biclosed sets in its root system. In \cite{Barkley2022}, it was shown that when $W=\widetilde{S}_n$ is the affine symmetric group, then the extended weak order is a quotient of the lattice $L_n$ of translation-invariant total orderings of the integers. In this article, we give a combinatorial introduction to $L_n$ and the extended weak order on $\widetilde{S}_n$. We show that $L_n$ is an algebraic completely semidistributive lattice. We describe its canonical join representations using a cyclic version of Reading's non-crossing arc diagrams. We also show analogous statements for the lattice of all total orders of the integers, which is the extended weak order on the symmetric group $S_\infty$. A key property of both of these lattices is that they are profinite; we also prove that a profinite lattice is join semidistributive if and only if its compact elements have canonical join representations. We conjecture that the extended weak order of any Coxeter group is a profinite semidistributive lattice.
\end{abstract}

\tableofcontents

\section{Introduction}

The \newword{extended weak order} of a Coxeter group $W$ was introduced by Matthew Dyer in the study of reflection orders and Kazhdan--Lusztig polynomials \cite{Dyer1993,Dyer2019}. We denote the extended weak order of $W$ by $\Dyer(W)$. It is the poset of \newword{biclosed sets} in the root system of $W$, ordered by containment. $\Dyer(W)$ includes as subposets the \newword{weak order} on $W$ (which is the partial order on $W$ given by orienting its Cayley graph away from the identity) and the \newword{limit weak order} on $W$ \cite{Lam2013,Wang2019} (which includes words of infinite length in the generators of $W$). Extended weak order has been the subject of many conjectures since its introduction, most of which are extensions of properties that are known to hold for the weak order on $W$. For instance, it is conjectured that $\Dyer(W)$ is a complete lattice \cite{Dyer2019}. When $W$ is finite, the conjecture is equivalent to an observation of Bj\"orner \cite{BjornerLatticeFinite} that weak order on a finite Coxeter group is a lattice. The conjecture was solved for rank 3 affine Coxeter groups in \cite{Wang2019} and for all affine Coxeter groups in \cite{Barkley2023}. All other cases are open. Recent work has also made conjectural connections between extended weak order and the lattice of torsion classes for preprojective algebras \cite{Dana2023}.

One goal of this article is to give an introduction to the combinatorics of the extended weak order when $W$ is the affine symmetric group $\tS_n$, which is the affine Coxeter group of type $\tA_{n-1}$. In this case, $\Dyer(\tS_n)$ was shown to be a lattice in \cite{Barkley2022,Barkley2023} (and was earlier known to be a lattice by Dyer). There are two other lattices closely related to $\Dyer(W)$. One is the \newword{weak order on total orders} of the integers, denoted $\WO(\Tot)$. It was shown in \cite{Barkley2022} that $\WO(\Tot)$ is isomorphic to $\Dyer(S_\infty)$, the extended weak order on the infinite symmetric group $S_\infty$. The other lattice is the \newword{weak order on translation-invariant total orders} (TITOs), denoted $\WO(\TTot_n)$. It was shown in \cite{Barkley2022} that $\Dyer(\tS_n)$ is a lattice quotient of $\WO(\TTot_n)$\footnote{The relationship between $\Dyer(S_\infty)$ and $\Dyer(\tS_n)$ is that of \emph{folding} the root system of $S_\infty$ to the root system of $\tS_n$.}. Similar descriptions of $\Dyer(W)$ for $W$ of type $\tB,$ $\tC,$ and $\tD$ are given in \cite[Section 6]{Barkley2022}. It turns out that many properties of $\Dyer(\tS_n)$ follow immediately from properties of $\WO(\TTot_n)$. As a result, we shall focus in this article on the lattices $\WO(\Tot)$ and $\WO(\TTot_n)$, 
and deduce the consequences for $\Dyer(\tS_n)$ as corollaries. Our main theorem is the following.

\begin{theorem}\label{thm:intro}
    The lattices $\WO(\Tot)$, $\WO(\TTot_n)$, and $\Dyer(\tS_n)$ are profinite semidistributive lattices. 
\end{theorem}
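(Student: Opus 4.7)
The plan is to split the theorem into profiniteness and semidistributivity, and to handle the three lattices in the order $\WO(\Tot)$, $\WO(\TTot_n)$, $\Dyer(\tS_n)$, with the last case deduced from $\WO(\TTot_n)$ via the lattice quotient established in \cite{Barkley2022}.

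For profiniteness, I would realize each lattice as a cofiltered inverse limit of finite weak orders. A total order of $\ZZ$ is determined by its restrictions to windows $\{-N, \dots, N\}$, and each restriction is an element of the finite weak order on $S_{2N+1}$, with containment of inversion sets matching weak-order comparisons. The restriction maps should be surjective complete-lattice homomorphisms which commute with further truncation, and their inverse limit should recover $\WO(\Tot)$. An analogous construction handles $\WO(\TTot_n)$, truncating the inversion data of a TITO to pairs $(i,j)$ with $|i-j| \le N$ taken modulo the translation action, landing in a suitable finite affine-type weak order. For $\Dyer(\tS_n)$, profiniteness should follow because the quotient map from $\WO(\TTot_n)$ is compatible with the finite truncations on both sides.

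For semidistributivity, the key is to invoke the general criterion stated in the abstract: a profinite lattice is join-semidistributive if and only if every compact element admits a canonical join representation. With profiniteness in hand, it then suffices to exhibit canonical join representations for compact elements. The compact elements of $\WO(\Tot)$ should correspond exactly to total orders that differ from the standard order on $\ZZ$ at only finitely many pairs, i.e.\ to finitely supported permutations, whose canonical join representations are supplied by Reading's non-crossing arc diagrams on a finite linearly ordered ground set. For $\WO(\TTot_n)$, compact elements should be classified by the cyclic non-crossing arc diagrams introduced in this paper, and their canonical joinands read off from the individual arcs. Meet-semidistributivity then follows by a self-duality argument: complementing the inversion set, equivalently reversing the total order, is an order-reversing bijection, and so exchanges canonical join and canonical meet representations. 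Semidistributivity for $\Dyer(\tS_n)$ is finally deduced from that of $\WO(\TTot_n)$ by transporting canonical joinands through the quotient map.

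The main obstacle will be the second point for $\WO(\TTot_n)$: defining cyclic arcs and the non-crossing condition correctly in a translation-invariant setting, showing that join-irreducible compact elements are in bijection with cyclic arcs, and verifying that canonical join representations of compact elements correspond exactly to sets of pairwise non-crossing cyclic arcs. Secondary technical points to check are that the truncation maps are complete-lattice homomorphisms rather than merely monotone maps, that the inverse-limit construction is compatible with the quotient $\WO(\TTot_n) \to \Dyer(\tS_n)$, and that compact elements behave well under this quotient so that canonical join representations really do descend.
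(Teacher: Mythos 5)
Your overall architecture — profiniteness first, then semidistributivity via the profinite criterion, handling the three lattices in the order $\WO(\Tot)\to\WO(\TTot_n)\to\Dyer(\tS_n)$ — matches the paper closely, and several details differ in ways that are fine: the paper obtains profiniteness of $\WO(\TTot_n)$ by observing it is a \emph{complete sublattice} of the profinite lattice $\WO(\Tot)$ (\cref{lem:sublatticeprofinite}, \cref{thm:latticetSn}) rather than by building its own truncation system, though your proposed truncation by $\tT_N=\{\rind{a,b}\mid b-a\le N\}$ is essentially the filtration the paper sketches in a remark after \cref{thm:TITOcompact}. Your self-duality argument for meet-semidistributivity (reversing a total order complements its inversion set and gives an order-reversing involution of both $\WO(\Tot)$ and $\WO(\TTot_n)$) is a valid alternative to the paper's appeal to ``dual versions'' of its theorems.

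The genuine gap is the profiniteness of $\Dyer(\tS_n)$. You write that it ``should follow because the quotient map from $\WO(\TTot_n)$ is compatible with the finite truncations on both sides,'' but the paper explicitly warns that a complete lattice quotient of a profinite lattice need not be profinite (the remark after \cref{lem:sublatticeprofinite}, citing \cite[Example 2.13]{Iyama2018}). So ``profinite descends along quotients'' is false in general and cannot be invoked. The paper instead constructs explicit cofinite congruences $\equiv'_{A,B}$ on $\Dyer(\tS_n)$, as the finest congruences coarsening both the kernel congruence of $\WO(\TTot_n)\to\Dyer(\tS_n)$ and the truncation congruence $\equiv_{A,B}$, and then verifies by a direct combinatorial argument that these detect equality (the point being that collapsing blocks of size $1$ never changes the relative order of integers in distinct residue classes). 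Your proposal does not supply this verification, and since the quotient step is precisely where profiniteness could fail, this is where the argument needs to be done by hand. The corresponding semidistributivity step for $\Dyer(\tS_n)$ is, by contrast, safe: complete semidistributivity does pass to complete lattice quotients (the paper cites \cite[Exercise 9.49]{Reading2016a}), which is cleaner than trying to transport individual canonical joinands through the quotient as you suggest.
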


\newword{Semidistributivity} is an important property of the weak order on a finite Coxeter group, and is the foundation for constructions such as canonical join representations \cite{Freese}, shard intersection orders \cite{ReadingShardInt}, and rowmotion \cite{RowSlow}. The notion of a \newword{profinite} lattice (see, for instance, \cite{Vosmaer2010PhD}) seems to be relatively unstudied outside of universal algebra. However, we demonstrate here that it is a powerful tool for studying the extended weak order. For instance, all profinite lattices are \newword{algebraic lattices} (\Cref{cor:algebraic}), which is a more well-studied type of lattice that is determined by its \newword{compact} elements. Furthermore, a profinite semidistributive lattice is also \newword{completely semidistributive}. We make the following conjecture generalizing \Cref{thm:intro}:

\begin{conjecture}\label{conj:intro}
    If $W$ is any Coxeter group, then $\Dyer(W)$ is a profinite semidistributive lattice.
\end{conjecture}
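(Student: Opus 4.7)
My plan is to mirror the structure of the argument given in this article for $\tS_n$: namely, to identify a universal profinite semidistributive lattice $\cU(W)$ that admits $\Dyer(W)$ as a lattice quotient. For $\tS_n$ the role of $\cU(W)$ is played by $\WO(\TTot_n)$, and the quotient map is induced by intersecting a translation-invariant total order with the positive root system. For a general Coxeter group $W$, the natural candidates for $\cU(W)$ are some version of Dyer's reflection orders on $\Phi^+$, or a related poset of compatible total orderings of rank-2 parabolic sub-root-systems. Once $\cU(W)$ is in hand, one would verify it is profinite and semidistributive by combinatorial arguments of the type developed in this article, then show $\Dyer(W)$ is a lattice quotient and conclude by preservation of these properties under lattice quotients.

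The profiniteness of $\Dyer(W)$ itself would be established by realizing it as the inverse limit $\varprojlim_J \Dyer(W_J)$, where $J$ ranges over finite subsets of simple generators of $W$ and the connecting maps are given by restriction $B \mapsto B \cap \Phi_J^+$. Each $\Dyer(W_J)$ is the finite weak order on $W_J$. Injectivity of the induced map $\Dyer(W) \to \varprojlim_J \Dyer(W_J)$ follows from the fact that biclosedness of a subset of $\Phi^+$ is detected on its rank-2 sub-root-systems. Surjectivity amounts to the patching statement that a compatible family of biclosed subsets of all finite standard parabolic subsystems extends uniquely to a biclosed subset of $\Phi^+$; this should be accessible in the spirit of Dyer's 2-closure axioms, though it is nontrivial.

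For semidistributivity, assuming the lattice structure, I would invoke the theorem of this article stating that a profinite lattice is join- and meet-semidistributive if and only if every compact element admits a canonical join (resp.\ meet) representation. The compact elements of $\Dyer(W)$ should coincide with the finitely generated biclosed sets, which are exactly the inversion sets of elements $w \in W$, carrying the ordinary weak order. For each such $w$, I would produce a canonical join representation by embedding into a finite standard parabolic subgroup $W_J$ containing $w$, where Reading's shard theory and non-crossing arc diagrams supply such representations; the main technical point is verifying that these representations are independent of the choice of $J$ and stabilize as $J$ grows.

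The dominant obstacle is that the very statement that $\Dyer(W)$ is a \emph{lattice} remains Dyer's open conjecture outside the finite and affine cases; without this, none of the above proves semidistributivity of a genuine pair of meet and join. A secondary obstacle is constructing the universal object $\cU(W)$ uniformly: the $\tS_n$ and $S_\infty$ arguments rely on the combinatorics of totally ordered sets of integers, which does not translate transparently to, say, hyperbolic Coxeter groups. I expect that a successful proof of the conjecture will not proceed by first proving the lattice property and then adding structure, but will instead establish all three properties (lattice, profinite, semidistributive) simultaneously through a uniform model of biclosed sets adapted to arbitrary Coxeter root systems.
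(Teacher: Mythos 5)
This statement is a conjecture, not a theorem: the paper explicitly leaves it open, proving only the special cases $\WO(\Tot)$, $\WO(\TTot_n)$, and $\Dyer(\tS_n)$ in \Cref{thm:intro} and noting that even the lattice property of $\Dyer(W)$ is open outside the finite and affine cases. You correctly recognize this, and your concluding assessment---that the lattice, profinite, and semidistributive properties will likely need to be established simultaneously through a uniform combinatorial model---is in the same spirit as how the paper proceeds for $\tS_n$ via $\WO(\TTot_n)$.

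However, your specific plan for profiniteness contains a gap that would sink it for precisely the cases of interest. You propose realizing $\Dyer(W)$ as $\varprojlim_J \Dyer(W_J)$ over finite subsets $J$ of the simple generators. When $W$ has \emph{finite rank} but is infinite (affine, hyperbolic, etc.), there are only finitely many standard parabolic subgroups $W_J$, and every proper one fails to see most of the root system. The resulting inverse limit is a finite lattice, which cannot be $\Dyer(W)$. This is not merely a technical annoyance: it is why the paper's profiniteness argument for $\WO(\TTot_n)$ does \emph{not} go through parabolic restriction. The cofinite congruences $\equiv_{A,B}$ used there (restricting a translation-invariant total order to the window of integers $[A,B]_<$) are indexed by an infinite family of windows, and restricted from $\WO(\Tot)$ via \Cref{lem:sublatticeprofinite}; they do not correspond to standard parabolics of $\tS_n$. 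So for a general finite-rank Coxeter group, one of the genuinely missing ideas is the correct replacement for parabolic restriction---some infinite family of cofinite congruences that sees the whole root system. (Your parabolic approach would be fine for infinite-rank groups like $S_\infty$, where the family of finite $J$ is itself infinite and exhaustive.) Your candidate $\cU(W)$ built from reflection orders is also only loosely parallel to $\WO(\TTot_n)$: the latter is a lattice of total orders of $\ZZ$, not of total orders of $\Phi^+$, and the relationship between the two is mediated by \Cref{thm:TITOtoDyer} in a way that does not obviously generalize.
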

We note that the ``profinite'' part of Conjecture \ref{conj:intro} follows quickly for affine Coxeter groups from the results in \cite{Barkley2023}.

To study the lattices $\WO(\Tot)$ and $\WO(\TTot_n)$, we introduce \newword{arc diagrams} which describe some of their elements. Arc diagrams were introduced by Nathan Reading in \cite{Reading2015} as a way to encode the \newword{canonical join representation} of an element of the finite symmetric group $S_n$. Our arc diagrams for $\WO(\Tot)$  
are an obvious generalization of Reading's definition, and our \newword{cyclic arc diagrams} for $\WO(\TTot_n)$ can be identified with the translation-invariant arc diagrams of $\WO(\Tot)$. In $S_n$, every element has a canonical join representation, so the arc diagram of a permutation uniquely determines it. In a general infinite completely semidistributive lattice, including $\WO(\Tot)$ and $\WO(\TTot_n)$, this is no longer the case. We introduce the notion of a \newword{widely generated} element of a poset (\Cref{def:widelygenerated}). The widely generated elements of a profinite completely semidistributive lattice turn out to be exactly the ones that have a canonical join representation. We study this notion in depth for $\WO(\Tot)$ and $\WO(\TTot_n)$. 

This paper is written as an introduction to these lattices for the combinatorialist. We do not require any knowledge of Coxeter groups and have included an extensive background section detailing the lattice theory of the weak order on $S_n$. In \Cref{sec:Sinfty} we study $\WO(\Tot)$ and introduce the notion of a widely generated element of a poset. In \Cref{sec:TITO} we study $\WO(\TTot_n)$ and its cyclic arc diagrams. We conclude in \Cref{sec:profinite} by discussing properties of profinite lattices and proving \Cref{thm:intro}. 

\section{Background} 
\label{sec:symmetric}
We begin by recalling the combinatorics of the weak order on the symmetric group.
Let $S_n$ denote the group of permutations of the set $\{1,\ldots,n\}$. 
\begin{definition}\label{def:weakorderSn}
We say that the pair $(a,b)$ is an \newword{inversion} of $\pi$ if 
\( a < b \text{ and } \pi^{-1}(b) < \pi^{-1}(a). \)
Write $N(\pi)$ for the set of inversions of $\pi$. The \newword{weak order} on $S_n$ is the partial ordering such that $\pi_1 \leq \pi_2$ if and only if $N(\pi_1)\subseteq N(\pi_2)$.    
\end{definition} 
Implicit in \Cref{def:weakorderSn} is the fact that the set $N(\pi)$ determines $\pi$ uniquely.  If we write a permutation in one-line notation, then the inversions are the pairs which are out of order. 

\begin{example}
The inversions of the permutation $51423$ are
\[(1,5),~(4,5),~(2,5),~(3,5),~(2,4),~(3,4).\]
\end{example}
\Cref{fig:S3weak} depicts the Hasse diagram of weak order on $S_3$. 

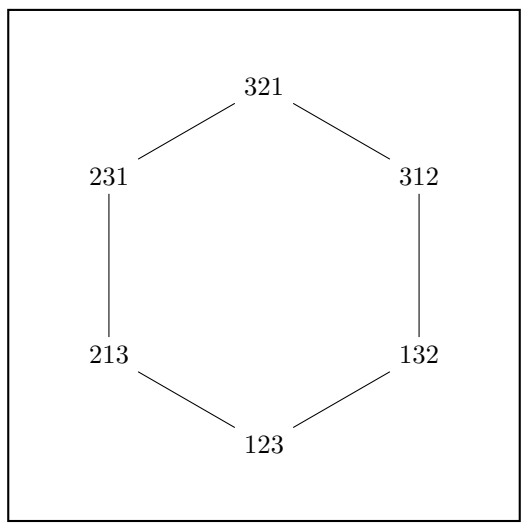
\begin{figure}
    \centering
    \begin{tikzpicture}[scale=1.7]
        \node (B) at (270:1.4) {$123$};
        \node (L1) at (210:1.4) {$213$};
        \node (L2) at (150:1.4) {$231$};
        \node (T) at (90:1.4) {$321$};
        \node (R2) at (30:1.4) {$312$};
        \node (R1) at (-30:1.4) {$132$};
        \draw (B) -- (L1) -- (L2) -- (T) -- (R2) -- (R1) -- (B);
        \draw[thick] (-2,-2) rectangle (2,2);
    \end{tikzpicture}
    \caption{The Hasse diagram of weak order on $S_3$.}
    \label{fig:S3weak}
\end{figure}

\subsection{Lattice structure on $S_n$}

\begin{definition}
    Let $P$ be a poset. Then we say that $P$ is a (bounded) \newword{lattice} if every finite subset $X\subseteq P$ has a least upper bound, called its \newword{join}, and a greatest lower bound, called its \newword{meet}. We denote the join by $\bigvee X$ or $\bigvee_{x\in X} x$ and the meet by $\bigwedge X$ or $\bigwedge_{x\in X} x$. If \emph{every} subset $X\subseteq P$ has a join and a meet, then we say that $P$ is a \newword{complete lattice}.
\end{definition}

Every finite lattice is a complete lattice, but for infinite posets, being a complete lattice is a stronger property. The following is well-known. 
\begin{proposition}\label{prop:latticeSn}
    The weak order on $S_n$ is a (complete) lattice.
\end{proposition}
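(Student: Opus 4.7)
Plan: Since $S_n$ is finite, being a complete lattice is equivalent to being a lattice, and since the reverse permutation $w_0 = n(n-1)\cdots 1$ has inversion set equal to all of $\{(a,b) : 1\le a<b\le n\}$, it is the maximum element of weak order. Consequently, producing binary joins suffices, since meets can be recovered as joins of sets of common lower bounds in the finite setting. My strategy is to characterize inversion sets combinatorially and construct joins via an explicit closure.

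First I would prove that a subset $I \subseteq R := \{(a,b) : 1 \le a < b \le n\}$ equals $N(\pi)$ for some $\pi \in S_n$ if and only if $I$ is \emph{biclosed}, meaning that for every triple $a<b<c$: (i) if $(a,b),(b,c) \in I$ then $(a,c) \in I$; and (ii) if $(a,c) \in I$ then $(a,b) \in I$ or $(b,c) \in I$. The forward direction is a direct check from the definition of inversion. For the reverse, I would induct on $|I|$, using the observation that any nonempty biclosed set contains some simple inversion $(a,a+1)$; removing it yields a biclosed set of smaller size, which corresponds to multiplication by the simple transposition swapping positions $a$ and $a+1$.

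Given this characterization, I would build $\pi \vee \sigma$ as follows. Let $J_0 := N(\pi) \cup N(\sigma)$; this set inherits condition (ii) from each of its two constituents. Iteratively close $J_0$ under (i) by adjoining $(a,c)$ whenever $(a,b),(b,c)$ are both present, and let $\overline{J}$ denote the resulting finite set. The core lemma is that $\overline{J}$ still satisfies (ii). Granting this, $\overline{J} = N(\tau)$ for a unique $\tau \in S_n$, and $\tau$ is forced to be $\pi \vee \sigma$: any upper bound of $\pi$ and $\sigma$ has biclosed inversion set containing $J_0$, and hence, by minimality of the closure, containing $\overline{J}$.

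The main obstacle is the core lemma that the closure $\overline{J}$ still satisfies condition (ii). I would prove this by induction on the sequence of closure steps: when a fresh pair $(a,c)$ is adjoined because $(a,b_0)$ and $(b_0,c)$ are already present, and an arbitrary intermediate $a<b<c$ is given, one splits into the cases $b<b_0$, $b=b_0$, $b>b_0$ and applies (ii), inductively available for the shorter pairs $(a,b_0)$ and $(b_0,c)$, to locate either $(a,b)$ or $(b,c)$ in the current set. This is technical but elementary.
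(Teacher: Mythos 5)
The paper does not actually give a proof of this proposition; it asserts it as well-known and invokes Reading's closure-operator formalism (\cite[Theorem~10-3.25]{Reading2016b}) for computing joins. Your approach---characterize inversion sets as the biclosed subsets of $\{(a,b):1\le a<b\le n\}$ and show the transitive closure of a union of inversion sets is again an inversion set---is precisely the argument the paper is implicitly relying on, and the overall structure (finiteness plus a maximum element, so binary joins suffice; biclosed characterization; closure lemma) is sound.

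There is, however, a gap in your sketch of the core lemma. In the case $b<b_0$, applying condition (ii) to $(a,b_0)$ gives you $(a,b)$ or $(b,b_0)$ in the set. If the answer is $(b,b_0)$, you are not yet done: you want $(b,c)$, and to get it you must combine $(b,b_0)$ with $(b_0,c)$ \emph{via transitivity}. But if you are inducting on ``the sequence of closure steps,'' the current set need not yet be transitively closed, so $(b,c)$ may not literally be present at that stage. Your phrase ``locate either $(a,b)$ or $(b,c)$ in the current set'' papers over this missing step. Two clean fixes: (1) order the closure steps so that shorter pairs (smaller $c-a$) are always adjoined first; then $(b,c)$, being both addable and shorter than $(a,c)$, is guaranteed to be present when $(a,c)$ is adjoined; or (2) abandon the step-by-step induction and work directly with the full transitive closure $\overline{J}$: every $(a,c)\in\overline{J}$ is witnessed by a chain $a=d_0<d_1<\cdots<d_k=c$ with each $(d_i,d_{i+1})\in J_0$; any intermediate $b$ either equals some $d_i$ (whence $(a,b)\in\overline{J}$ by transitivity along the initial segment) or lies strictly between $d_i$ and $d_{i+1}$, in which case (ii) for $(d_i,d_{i+1})\in J_0$ together with transitivity of $\overline{J}$ along the appropriate sub-chain yields $(a,b)\in\overline{J}$ or $(b,c)\in\overline{J}$. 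Either repair closes the gap; the rest of your argument is fine.
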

To compute the join of a list of permutations, we will use the method of \cite[Theorem 10-3.25]{Reading2016b} and introduce a closure operator on sets of inversions. Write $T\coloneqq \{(a,b)\mid 1\leq a < b \leq n\}.$ If $N\subseteq T$, then we define the \newword{closure} of $N$ to be the minimal set $\overline{N}\supseteq N$ such that if $a<b<c$ and $(a,b)$, $(b,c)$ are both in $\overline{N}$, then $(a,c)$ is in $\overline{N}$. The join of two permutations $\pi_1$ and $\pi_2$, denoted $\pi_1\vee \pi_2$, is the unique permutation with inversion set $\overline{N(\pi_1)\cup N(\pi_2)}.$
More generally, the join of a family $\{\pi_i\}_{i\in I}$ has inversion set $\overline{\bigcup_{i\in I}N(\pi_i)}$. One can compute the meet of a family $\{\pi_i\}_{i\in I}$ dually: it has inversion set $(\bigcap_{i\in I}N(\pi_i))^\circ$, where $N^\circ\coloneqq T\setminus \overline{T\setminus N}$ is the \newword{interior} of $N$.

\begin{example}
Let's compute the join of $213$ and $132$. Then $N(213)=\{(1,2)\}$ and $N(132)=\{(2,3)\}$. We need to compute the closure of $N(213)\cup N(132) = \{(1,2),(2,3)\}$. The closure is forced to contain $(1,3)$, since $(1,2)$ and $(2,3)$ are both elements. Hence 
$\overline{N(213)\cup N(132)}=\{(1,2),(2,3),(1,3)\}.$
The join $213\vee 132$ should be the unique permutation with this inversion set, which is the permutation $321$. As can be seen in \Cref{fig:S3weak}, we indeed have $213\vee 132=321$.

\end{example}

\subsection{Arc diagrams}
In this section, we summarize Nathan Reading's description of canonical join representations in $S_n$ via non-crossing arc diagrams \cite{Reading2015}. We use the language from \cite{Padrol}.
Fix $n\in \NN$. Then an \newword{arc} for $S_n$ is a tuple $(a,b,L,R)$, where $1 \leq a < b \leq n$ and $L,R$ is a partition of the integers $\{a+1, a+2, \ldots, b-1\}$ into two sets: the \newword{left set} $L$ and the \newword{right set} $R$. We depict arcs using \newword{arc diagrams}. The arc $(a,b,L,R)$ is drawn as an arc starting at $a$ (its \newword{initial value}) and ending at $b$ (its \newword{terminal value}), passing over the elements of $L$ and under the elements of $R$. Examples are shown in \Cref{fig:shardarc}.
\begin{definition}
Let $\pi$ be a permutation. A pair $(a,b)$ is called a \newword{lower wall} of $\pi$ if $\pi^{-1}(a) = \pi^{-1}(b)+1$. Similarly, $(a,b)$ is an \newword{upper wall} if $\pi^{-1}(a) = \pi^{-1}(b)-1$. An arc $(a,b,L,R)$ is said to be a \newword{lower arc} (respectively, \newword{upper arc}) of $\pi$ if

\begin{itemize}
    \item $(a,b)$ is a lower (respectively, upper) wall of $\pi$, and
    \item $\pi^{-1}(\ell) < \pi^{-1}(a)$ for all $\ell\in L$, and
    \item $\pi^{-1}(b) < \pi^{-1}(r)$ for all $r\in R$.
\end{itemize} 
\end{definition}

We identify the pair $(a,b)$ with the transposition swapping $a$ and $b$. Hence $(a,b)\cdot \pi$ denotes product of two permutations. For two elements $x,y$ in a poset, we say $y$ \newword{covers} $x$ if $x<y$ and there does not exist $z$ so that $x<z<y$. We write $x\lessdot y$ to mean that $y$ covers $x$. We note that if $\pi' \leq \pi$ and $|N(\pi')| = |N(\pi)|-1$, then necessarily $\pi'\lessdot \pi$. The following shows that these are the only cover relations in weak order, and relates covers to lower walls.

\begin{proposition}\label{prop:wallscoverSn}
    The following are equivalent, for a permutation $\pi$ and a pair $(a,b)$ with $a<b$:
    \begin{itemize}
        \item[(a)] The pair $(a,b)$ is a lower wall of $\pi$;
        \item[(b)] The product $(a,b)\cdot \pi$ is covered by $\pi$;
        \item[(c)] The pair $(a,b)$ is in $N(\pi)$ and $N(\pi)\setminus \{(a,b)\}$ is the inversion set of some permutation.
    \end{itemize}
    Conversely, if $\pi' \lessdot \pi$ is a cover relation, then there exists a unique pair $(a,b)$ so that $N(\pi) \setminus \{(a,b)\} = N(\pi')$.
\end{proposition}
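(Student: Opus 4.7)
The plan is to establish (a)$\Rightarrow$(b) and (a)$\Rightarrow$(c) by direct computation, (c)$\Rightarrow$(a) via biclosedness of inversion sets, and then the converse statement by a minimality argument, from which (b)$\Rightarrow$(a) follows by cancellation. The computational core is the observation that if $(a,b)$ is a lower wall of $\pi$, then, writing $i=\pi^{-1}(b)$, the permutation $\pi$ has $b,a$ in consecutive positions $i,i+1$, and $(a,b)\cdot\pi$ agrees with $\pi$ outside those two positions while swapping the values $b,a$ within them. Checking each pair shows that exactly the inversion $(a,b)$ is removed, so $N((a,b)\cdot\pi)=N(\pi)\setminus\{(a,b)\}$. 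This proves (c); combined with the preceding remark that a lesser permutation with one fewer inversion is covered by $\pi$, we obtain (b).

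For (c)$\Rightarrow$(a), I use that inversion sets are \emph{biclosed}: for any triple $x<y<z$, both $(x,y),(y,z)\in N$ forces $(x,z)\in N$ (closed), and both absent forces $(x,z)$ absent (co-closed). Writing $M=N(\pi)\setminus\{(a,b)\}$, assume $M$ is an inversion set and suppose $(a,b)$ is not a lower wall. Then some $c=\pi(j)$ with $\pi^{-1}(b)<j<\pi^{-1}(a)$ satisfies $c\neq a,b$, and I split into three cases. If $a<c<b$, then $(a,c)$ and $(c,b)$ are inversions of $\pi$ distinct from $(a,b)$, hence both lie in $M$, so closedness forces $(a,b)\in M$. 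If $c<a$, then $(c,a)\notin N(\pi)\supseteq M$ while $(c,b)\in M$, so the co-closed condition on $c<a<b$ forces $(a,b)\in M$. The case $c>b$ is symmetric. Each contradicts $(a,b)\notin M$.

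The main obstacle is the converse statement; (b)$\Rightarrow$(a) will follow from it. Assume $\pi'\lessdot\pi$, set $D=N(\pi)\setminus N(\pi')$, and suppose $|D|\geq 2$ for contradiction. Choose $(a,b)\in D$ minimizing the position difference $\pi^{-1}(a)-\pi^{-1}(b)$. I claim $(a,b)$ is a lower wall of $\pi$: otherwise, let $c=\pi(\pi^{-1}(b)+1)$ and repeat the three-case analysis. In each case the relevant auxiliary inversions ($(c,b)$ and/or $(a,c)$) are inversions of $\pi$ with strictly smaller position difference than $(a,b)$, so by minimality they lie in $N(\pi')$; biclosedness of $N(\pi')$ then forces $(a,b)\in N(\pi')$, contradicting $(a,b)\in D$. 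Hence $(a,b)$ is a lower wall, and by the first paragraph $(a,b)\cdot\pi$ has inversion set $N(\pi)\setminus\{(a,b)\}$, which strictly contains $N(\pi')$ since $|D|\geq 2$. This gives $\pi'<(a,b)\cdot\pi<\pi$, contradicting the cover. Therefore $|D|=1$, and its single element is the unique pair promised. Finally, (b)$\Rightarrow$(a): if $(a,b)\cdot\pi\lessdot\pi$, the converse provides a unique $(a',b')$ with $N(\pi)\setminus\{(a',b')\}=N((a,b)\cdot\pi)$; by (c)$\Rightarrow$(a) this $(a',b')$ is a lower wall, so by the first paragraph $(a',b')\cdot\pi=(a,b)\cdot\pi$, and canceling $\pi$ yields $(a,b)=(a',b')$.
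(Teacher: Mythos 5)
The paper states \Cref{prop:wallscoverSn} without proof, treating it as a standard background fact (indeed, when it returns to the analogous statement for total orders in \Cref{prop:wallscoverSinfty}, it declares the equivalence of (a)--(c) ``straightforward'' and only writes out the converse direction). So there is no paper proof to compare against, but your argument is correct and self-contained. The computational core for (a)$\Rightarrow$(b),(c), the biclosedness argument for (c)$\Rightarrow$(a), and the deduction of (b)$\Rightarrow$(a) from the converse by cancellation are all sound. For the converse, your device of picking $(a,b)\in D=N(\pi)\setminus N(\pi')$ minimizing the position gap $\pi^{-1}(a)-\pi^{-1}(b)$ and showing it must be a lower wall of $\pi$ is essentially the same idea the paper deploys later in the proof of \Cref{lem:findcoverfinite} (there, minimizing $|[y,x]_\prec|$), so your approach fits naturally with the paper's treatment of the infinite analogs. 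One minor organizational note: showing that $(a,b)\cdot\pi$ removes exactly the inversion $(a,b)$ relies on the fact that $a,b$ occupy adjacent positions of $\pi$, so all other values keep their position relative to both $a$ and $b$ — you state this but it deserves the one-sentence justification, since it is what makes the whole first paragraph (and hence (b)$\Rightarrow$(a) later) work.
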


\begin{example}
    In the permutation $\pi=1423$, the pair $(2,4)$ is a lower wall. The product $(2,4)\cdot \pi$ is $1243$, which is covered by $1423$ in weak order. The inversion sets are
    \begin{align*} N(1423) &= \{(2,4),(3,4)\} \\ N(1243) &= \{(3,4)\}. \end{align*}
\end{example}

\begin{example}
    Let $\pi$ be the permutation $25143$. Then the lower walls are $(1,5),(3,4)$ and the upper walls are $(2,5),(1,4)$. The lower arcs are $(1,5,\{2\},\{3,4\})$, $(3,4,\varnothing,\varnothing)$, and the upper arcs are $(2,5,\varnothing,\{3,4\})$, $(1,4,\{2\}, \{3\})$. These arcs are depicted below, with the lower arcs in black and the upper arcs in blue.
    \begin{center}
        \begin{tikzpicture}
        \node (A1) at (1,0) {$1$};
        \node (A2) at (2,0) {$2$};
        \node (A3) at (3,0) {$3$};
        \node (A4) at (4,0) {$4$};
        \node (A5) at (5,0) {$5$};

        \draw (A1.east) to[out=0,in=180] (2,.5) to[out=0,in=180] (3,-.5) to[out=0,in=180] (4,-.5) to[out=0,in=180] (A5.west);
        \draw (A3.east) to (A4.west);
        \draw[blue] (A2.east) to[out=0,in=180] (3,-.75) to (4,-.75) to[out=0,in=180] (A5.west);
        \draw[blue] (A1.east) to[out=0,in=180] (2,.75) to[out=0,in=180] (3,-.25) to[out=0,in=180] (A4.west);
        \end{tikzpicture}
    \end{center}
\end{example}

The collection of lower arcs of a permutation is very well-behaved. We say the arcs $(a,b,L,R)$ and $(a',b',L',R')$ \newword{cross} if the depictions of the arcs in an arc diagram share initial values, share terminal values, or intersect in their relative interiors. More precisely, this occurs if and only if either $a=a'$, $b=b'$, or $(L\cap R') \cup (\{a,b\} \cap R') \cup (L\cap \{a',b'\})$ and $(R\cap L') \cup (\{a,b\} \cap L')\cup (R\cap \{a',b'\})$ are both nonempty. We say that a set of arcs $D$ is a \newword{non-crossing collection} if no two arcs in $D$ cross. 
In this case, a depiction of the arcs in $D$ in a single diagram is called a \newword{non-crossing arc diagram}.

Write $\LowerArcs(\pi)$ and $\UpperArcs(\pi)$ for the set of lower arcs of $\pi$ and the set of upper arcs of $\pi$, respectively.

\begin{proposition}\label{prop:noncrossingSn}
    For any permutation $\pi$, the set $\LowerArcs(\pi)$ is a non-crossing collection. Furthermore, any non-crossing collection $D$ is of the form $\LowerArcs(\pi)$ for a unique permutation $\pi$.
\end{proposition}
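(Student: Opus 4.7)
The plan is to handle non-crossing and the bijection separately. Throughout, the key observation is that a lower arc $(a,b,L,R)$ of $\pi$ encodes a descent of $\pi$: the pair $ba$ sits at consecutive positions $p, p+1$ in the one-line notation of $\pi$, with $L$ the values in $(a,b)$ at positions $\leq p-1$ and $R$ the values in $(a,b)$ at positions $\geq p+2$.

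For non-crossing, I would take two distinct lower arcs $\alpha = (a,b,L,R)$ and $\alpha' = (a',b',L',R')$, coming from descents at positions $(p, p+1)$ and $(q, q+1)$. Shared initial or terminal values would force $p = q$, hence $\alpha = \alpha'$, so no endpoint coincidence is possible. For the interior-intersection criterion from the text, any witness $\ell \in (L \cap R') \cup (\{a,b\} \cap R') \cup (L \cap \{a',b'\})$ satisfies $\pi^{-1}(\ell) \leq \pi^{-1}(a)$ and $\pi^{-1}(\ell) \geq \pi^{-1}(b')$, while any dual witness $r$ satisfies $\pi^{-1}(b) \leq \pi^{-1}(r) \leq \pi^{-1}(a')$. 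A short position-based case analysis (sorting on $p$ vs.\ $q$) shows that both witnesses can coexist only if $p+1 = q$ and $a = b'$, in which case $\alpha$ and $\alpha'$ merely touch at the common boundary point rather than crossing in their relative interiors.

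For the bijection, given an arc $\alpha = (a,b,L,R)$ I would define the join-irreducible permutation $j_\alpha$ with one-line notation
\[
1,\; 2,\; \ldots,\; a-1,\; L_\uparrow,\; b,\; a,\; R_\uparrow,\; b+1,\; \ldots,\; n,
\]
where $L_\uparrow$ and $R_\uparrow$ list $L$ and $R$ in increasing order. A direct count yields $N(j_\alpha) = \{(a,b)\} \cup \{(a,\ell) : \ell \in L\} \cup \{(r,b) : r \in R\}$, and $\alpha$ is the unique lower arc of $j_\alpha$. For a non-crossing collection $D$, I would set $\pi_D = \bigvee_{\alpha \in D} j_\alpha$, computed via the closure operator on $\bigcup_{\alpha \in D} N(j_\alpha)$. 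The main obstacle will be verifying $\LowerArcs(\pi_D) = D$: this is exactly where one must translate the combinatorial non-crossing condition into control over the transitive closure, showing that no additional lower walls are created and that each original $(a,b)$ persists as a lower wall of $\pi_D$ with the prescribed $L, R$. Uniqueness then drops out, because any $\pi$ with $\LowerArcs(\pi) = D$ must satisfy $j_\alpha \leq \pi$ for each $\alpha \in D$ (hence $\pi_D \leq \pi$), while the canonical join representation of $\pi$ in terms of its lower arcs supplies the reverse inequality.
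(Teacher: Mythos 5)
The paper does not prove this proposition; it is quoted from Reading's paper \cite{Reading2015} and used as a black box, so there is no in-paper argument to compare against. Judged on its own terms, your outline has the right shape but two genuine gaps.

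On the non-crossing claim, your positional inequalities are correct as stated but not tight enough to finish. They give $\pi^{-1}(b') \leq \pi^{-1}(\ell) \leq \pi^{-1}(a)$ and $\pi^{-1}(b)\leq \pi^{-1}(r)\leq\pi^{-1}(a')$, i.e.\ $p-1 \leq q \leq p+1$, and you then assert that in the residual ``touching'' case ($q=p\pm 1$) the arcs ``merely touch rather than cross.'' That assertion is not a conclusion you can draw from what precedes it: in the paper's combinatorial definition, both buckets being nonempty with $a\neq a'$, $b\neq b'$ \emph{is} crossing, so either both witnesses cannot coexist or the arcs cross --- there is no third option. What you actually want is that both buckets cannot simultaneously be nonempty, and your inequalities do not yet show this. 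The fix is to use the strict separation built into the definition of a lower arc: since any $\ell\in L$ has $\ell\neq a,b$ and $\pi^{-1}(\ell)<\pi^{-1}(a)$, in fact $\pi^{-1}(\ell)\leq p-1$ (not merely $\leq p+1$), and symmetrically for $R,L',R'$. With those sharper bounds, a nonempty first bucket forces $q \leq p-1$ and a nonempty second bucket forces $q \geq p+1$, an outright contradiction; no touching case analysis is needed.

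On the bijection, you correctly identify $j_\aa$ and its inversion set, and the uniqueness reduction (``$j_\aa\leq\pi$ whenever $\aa\in\LowerArcs(\pi)$, hence $\pi_D\leq\pi$'') is sound. But the claim $\LowerArcs(\pi_D)=D$ --- which you flag yourself as ``the main obstacle'' and the place where non-crossing must interact with the transitive closure --- is not carried out, and it is the mathematical heart of the surjectivity statement: you must show that taking the join of the $j_\aa$ over a non-crossing $D$ creates no new lower walls and preserves each prescribed arc with its left/right sets. As written, this is a plan, not a proof. Separately, your uniqueness argument invokes the canonical join representation (\Cref{prop:canonicaljoinSn}), but in Reading's development that result is established alongside or after the present proposition, so leaning on it here risks circularity; a self-contained argument would instead show injectivity of $\pi\mapsto\LowerArcs(\pi)$ directly (e.g.\ by reconstructing the one-line notation of $\pi$ from its lower arcs, or by a counting argument against the section $D\mapsto\pi_D$ once surjectivity is established).
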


In other words, taking lower arcs gives a bijection between permutations and non-crossing collections. A similar statement holds for the upper arcs\footnote{In this article, we shall focus on lower arcs (and joins), with the understanding that every statement about lower arcs has a dual version for upper arcs (and meets).}.

\begin{definition}
    An element $x$ of a lattice $L$ is called \newword{join-irreducible} if there is no finite set $X\subseteq L$ with $x\not\in L$ so that $\bigvee X = x$. An element $x$ of a complete lattice $L$ is called a \newword{complete join-irreducible} (\JI) if there is no set $X\subseteq L$ with $x\not\in X$ so that $\bigvee X = x$. We write $\JIrrc(L)$ for the set of complete join-irreducibles in $L$. We similarly define complete meet-irreducibles and write $\MIrrc(L)$ for their collection. 
\end{definition}

The two notions coincide for finite lattices. We will exclusively be interested in complete join-irreducibles; however we may omit the word ``complete'' in the finite lattice setting or for brevity. Given a \JI $j$, there is a unique element covered by $j$, which we always denote by $j_*$. Dually, if $m$ is a complete meet-irreducible, then we write $m^*$ for the unique element covering $m$. In a finite lattice like $S_n$, an element $\pi$ is join-irreducible if and only if $\pi$ covers a unique element $\pi_*$. (In an infinite lattice, there may be elements covering a unique element which are not complete join-irreducibles.) Hence one can verify by inspecting \Cref{fig:S3weak} that the \JIs in $S_3$ are $213,~132,~231,$ and $312$.

By \Cref{prop:wallscoverSn}, a permutation is join-irreducible if and only if $|\LowerArcs(\pi)|=1$. If $\aa$ is an arc, then let $j_{\aa}$ be the unique permutation with $\LowerArcs(j_\aa) = \{\aa\}$. To make this explicit, assume $\aa=(a,b,\{\ell_1,\ldots,\ell_i\},\{r_1,\ldots,r_j\})$ with $\ell_1<\cdots<\ell_i$ and $r_1<\cdots < r_j$. Then $j_{\aa}$ is the permutation with one-line notation
\[ 1,~2,~\ldots,~ a-1,~\ell_1,~\ell_2,~\ldots,~\ell_{i-1},~\ell_i,~ b, ~a, ~ r_1, ~r_2, \ldots, ~r_{j-1},~ r_j,~ b+1,~ b+2,~ \ldots,~ n. \]
We have shown:
\begin{proposition}
    The map $\aa\mapsto j_\aa$ is a bijection from arcs to $\JIrrc(S_n)$.
\end{proposition}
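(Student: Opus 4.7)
The proof will combine the two preceding propositions with a short explicit check. First I would observe that, since $S_n$ is finite, a permutation $\pi$ is join-irreducible if and only if it covers exactly one element. By \Cref{prop:wallscoverSn}, covers of $\pi$ in weak order are in bijection with lower walls of $\pi$, and every lower wall $(a,b)$ is the projection of a unique lower arc (the partition $(L,R)$ of $\{a+1,\ldots,b-1\}$ is determined by comparing positions $\pi^{-1}(\ell)$ with $\pi^{-1}(a)$). Hence $\pi$ is join-irreducible if and only if $|\LowerArcs(\pi)| = 1$.

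Next I would invoke \Cref{prop:noncrossingSn}: the map $\pi \mapsto \LowerArcs(\pi)$ is a bijection between $S_n$ and non-crossing collections of arcs. A single arc $\aa$ is vacuously a non-crossing collection, so there is a unique permutation, call it $j_\aa$, satisfying $\LowerArcs(j_\aa) = \{\aa\}$. Combining the two observations, the assignment $\aa \mapsto j_\aa$ is a bijection between the set of arcs and $\JIrrc(S_n)$. Both injectivity (from the injectivity of $\pi \mapsto \LowerArcs(\pi)$) and surjectivity (every JI has exactly one lower arc) come for free from this packaging.

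The only remaining work is to verify that the explicit one-line formula stated in the proposition really produces the permutation with $\LowerArcs = \{\aa\}$. I would do this by direct inspection: writing $\aa = (a,b,\{\ell_1 < \cdots < \ell_i\}, \{r_1 < \cdots < r_j\})$ and scanning consecutive pairs in the one-line notation
\[ 1, 2, \ldots, a-1,\ \ell_1, \ldots, \ell_i,\ b,\ a,\ r_1, \ldots, r_j,\ b+1, \ldots, n, \]
each consecutive pair is ascending except for the pair $(b,a)$, so $(a,b)$ is the unique descent, hence the unique lower wall. The elements preceding $a$ that lie strictly between $a$ and $b$ are precisely $\ell_1,\ldots,\ell_i$, and those following $b$ in that range are precisely $r_1,\ldots,r_j$, so the lower arc attached to this wall is exactly $\aa$.

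No part of this is genuinely difficult; the only mildly delicate step is checking that all consecutive pairs in the stated one-line notation other than $(b,a)$ are ascending, which reduces to the inequalities $a-1 < \ell_1$, $\ell_i < b$, $a < r_1$, and $r_j < b+1$, all immediate from $\{\ell_k\}, \{r_k\} \subseteq \{a+1,\ldots,b-1\}$. So the main conceptual content of the proof is the reduction to \Cref{prop:wallscoverSn} and \Cref{prop:noncrossingSn}, and I expect no real obstacle beyond bookkeeping.
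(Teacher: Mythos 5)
Your proof is correct and follows essentially the same route the paper takes: the paper likewise deduces from \Cref{prop:wallscoverSn} that $\pi$ is join-irreducible iff $|\LowerArcs(\pi)|=1$, invokes \Cref{prop:noncrossingSn} to get a unique $j_\aa$ for each singleton collection $\{\aa\}$, and then simply exhibits the one-line formula, stating the proposition as an immediate consequence ("We have shown:"). Your write-up merely makes explicit the bookkeeping (the equivalence with unique lower covers, the verification that the displayed one-line word has $(a,b)$ as its sole descent) that the paper leaves implicit.
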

\Cref{fig:shardarc} depicts several arcs with their associated join-irreducible. 
In the next subsection, we shall build upon this result to describe the lattice-theoretic significance of non-crossing collections.

\begin{figure}
\hspace{-1.5em}
\begin{tikzpicture}[scale=1]
\begin{scope}[shift={(-4,0)}]
\draw[thick] (-.5, -1.5) rectangle (3.5,1.5);
    \node at (1.5,1) {$\aa=(1,4,\{2,3\},\varnothing)$};

    \node (A1) at (0,0) {$1$};
    \node (A2) at (1,0) {$2$};
    \node (A3) at (2,0) {$3$};
    \node (A4) at (3,0) {$4$};

    \draw (A1.east) to[out=0,in=180] (1,.5) to (2,.5) to[out=0,in=180] (A4.west);
    
    \node at (1.5,-1) {$j_\aa=2341$};
\end{scope}
\begin{scope}[shift={(0,0)}]
\draw[thick] (-.5, -1.5) rectangle (3.5,1.5);
    \node at (1.5,1) {$\aa=(1,4,\{3\},\{2\})$};
    
    \node (A1) at (0,0) {$1$};
    \node (A2) at (1,0) {$2$};
    \node (A3) at (2,0) {$3$};
    \node (A4) at (3,0) {$4$};

    \draw (A1.east) to[out=0,in=180] (1,-.5) to[out=0,in=180] (2,.5) to[out=0,in=180] (A4.west);
    \node at (1.5,-1) {$j_\aa=3412$};
\end{scope}
\begin{scope}[shift={(4,0)}]
\draw[thick] (-.5, -1.5) rectangle (3.5,1.5);
    \node at (1.5,1) {$\aa=(1,4,\{2\},\{3\})$};
    
    \node (A1) at (0,0) {$1$};
    \node (A2) at (1,0) {$2$};
    \node (A3) at (2,0) {$3$};
    \node (A4) at (3,0) {$4$};

    \draw (A1.east) to[out=0,in=180] (1,.5) to[out=0,in=180] (2,-.5) to[out=0,in=180] (A4.west);
    \node at (1.5,-1) {$j_\aa=2413$};
\end{scope}
\begin{scope}[shift={(8,0)}]
\draw[thick] (-.5, -1.5) rectangle (3.5,1.5);
    \node at (1.5,1) {$\aa=(1,4,\varnothing,\{2,3\})$};

    \node (A1) at (0,0) {$1$};
    \node (A2) at (1,0) {$2$};
    \node (A3) at (2,0) {$3$};
    \node (A4) at (3,0) {$4$};

    \draw (A1.east) to[out=0,in=180] (1,-.5) to[out=0,in=180] (2,-.5) to[out=0,in=180] (A4.west);
    \node at (1.5,-1) {$j_\aa=4123$};
\end{scope}
\end{tikzpicture}
\caption{The arc diagrams for arcs in $S_4$ with initial value $1$ and terminal value $4$. Below each diagram, we have indicated the associated JI in $S_4$.}
\label{fig:shardarc}
\end{figure}
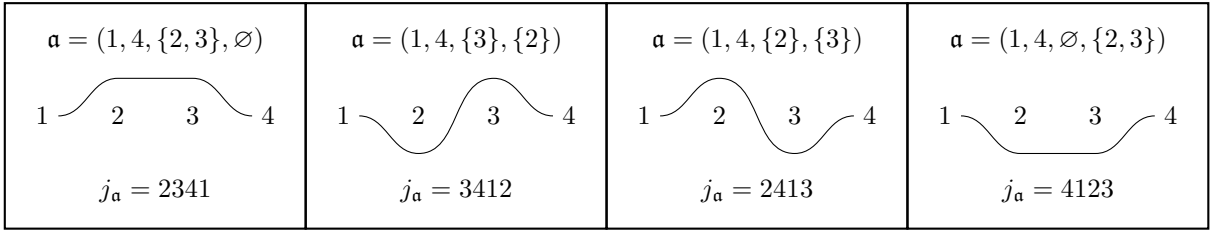

\subsection{Canonical join representations and semidistributivity}

\begin{definition}
    Let $L$ be a complete lattice and let $U,V\subseteq L$. We write $U\ll V$ to mean that for every $u\in U$, there is some $v\in V$ so that $u\leq v$. We say that $\bigvee U = x$ is an \newword{irredundant} join representation of $x$ if there is no proper subset $U'\subsetneq U$ so that $\bigvee U' = x$. We say that $\bigvee U = x$ is the \newword{canonical join representation} of $x$ if it is irredundant and for any $V\subseteq L$ so that $\bigvee V = x$, we have $U\ll V$.  
\end{definition}
For a general complete lattice $L$, there may be elements that do not have a canonical join representation. However, the following proposition (which is \cite[Proposition 2.4]{Reading2015}) shows that every element of $S_n$ has such a representation. 

\begin{proposition}\label{prop:canonicaljoinSn}
    Let $\pi\in S_n$. Then the canonical join representation of $\pi$ is 
    \[ \pi = \bigvee_{\aa\in \LowerArcs(\pi)} j_\aa . \]
\end{proposition}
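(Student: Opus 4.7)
The plan is to verify the three defining conditions of a canonical join representation---that the claimed join equals $\pi$, that it is irredundant, and that for any join representation $\bigvee V = \pi$ we have $\{j_\aa\} \ll V$---using the closure-based formula for joins in $S_n$ established just before the proposition statement.

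To show $\bigvee_\aa j_\aa = \pi$, reading off the explicit one-line notation of $j_\aa$ for $\aa = (a,b,L,R) \in \LowerArcs(\pi)$, each of its inversions is visibly an inversion of $\pi$ from the defining conditions of a lower arc, so $\bigvee_\aa j_\aa \leq \pi$. For the reverse inclusion I would fix $(c, d) \in N(\pi)$ and induct on the positional distance $\pi^{-1}(c) - \pi^{-1}(d) \geq 1$. The base case $\pi^{-1}(c) - \pi^{-1}(d) = 1$ is immediate since $(c, d)$ is then itself a lower wall, hence an inversion of the corresponding $j_\aa$; for larger distances, a short case analysis on whether a value $v$ at an intermediate position lies between $c$ and $d$, below $c$, or above $d$ produces two inversions of $\pi$ of strictly smaller positional distance that combine via the closure rule to yield $(c, d)$.

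Irredundancy amounts to showing $(a, b) \notin \overline{\bigcup_{\aa' \neq \aa} N(j_{\aa'})}$, where $(a, b)$ is the wall of $\aa$. A direct case check against the explicit inversion sets of $j_{\aa'}$ rules out $(a, b) \in N(j_{\aa'})$ whenever $\aa' \neq \aa$; and $(a, b)$ cannot arise via the closure rule either, because that would require an intermediate value $a < b_0 < b$ with both $(a, b_0)$ and $(b_0, b)$ in $N(\pi)$, forcing $\pi^{-1}(b) < \pi^{-1}(b_0) < \pi^{-1}(a)$ and contradicting the adjacency of $a$ and $b$ imposed by the lower-wall condition.

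For canonicality, given $V \subseteq S_n$ with $\bigvee V = \pi$ and an arc $\aa \in \LowerArcs(\pi)$, the same closure-derivation argument forces $(a, b) \in N(v)$ for some $v \in V$; I then show $N(j_\aa) \subseteq N(v)$ by a ``forcing lemma'' that exploits positional constraints on $v$. Namely, if $(a, \ell) \notin N(v)$ for some $\ell \in L$, then $v^{-1}(\ell) > v^{-1}(a) > v^{-1}(b)$ yields $(\ell, b) \in N(v) \subseteq N(\pi)$, contradicting $\pi^{-1}(\ell) < \pi^{-1}(a) = \pi^{-1}(b) + 1$; symmetrically $(r, b) \in N(v)$ is forced for $r \in R$; and finally, if $v^{-1}(\ell) > v^{-1}(r)$ for $r \in R, \ell \in L$ with $r < \ell$, then $v^{-1}(r) < v^{-1}(a)$ gives $(a, r) \in N(v) \subseteq N(\pi)$, contradicting $\pi^{-1}(r) > \pi^{-1}(a)$. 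This forcing lemma is the main obstacle: translating closedness of $N(v)$ into positional data on $v$ is where the permutation (rather than merely inversion-set) structure of $S_n$ is doing the real work, whereas Steps 1 and 2 are controlled manipulations of the closure operator alone.
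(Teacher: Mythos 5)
The paper does not prove this proposition; it is cited directly from Reading's Proposition~2.4 in \cite{Reading2015}, so there is no in-paper argument to compare against. Judging your proof attempt on its own terms, Steps~2 and~3 are sound: the case-check ruling out $(a,b)\in N(j_{\aa'})$ for $\aa'\neq\aa$ works (in each subcase the wall condition $\pi^{-1}(a)=\pi^{-1}(b)+1$ collapses $(a,b)$ to $(a',b')$), the closure argument for irredundancy is fine once you note that $\overline{\bigcup_{\aa'\neq\aa}N(j_{\aa'})}\subseteq N(\pi)$ so any derivation step gives an element of $N(\pi)$ strictly between $b$ and $a$, and the forcing lemma in Step~3 is correct.

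However, your Step~1 inductive argument for $N(\pi)\subseteq\overline{\bigcup_\aa N(j_\aa)}$ has a genuine gap. You claim that for each of the three cases on an intermediate value $v$ you ``produce two inversions of $\pi$ of strictly smaller positional distance that combine via the closure rule.'' This is only true when $c<v<d$. If $v<c$, the pair $(v,c)$ is not an inversion of $\pi$ (it is oriented the wrong way), and if $v>d$, the pair $(d,v)$ is not an inversion; in neither case do you get two inversions that chain through a value between $c$ and $d$. Concretely, take $\pi=35142$ and $(c,d)=(1,3)$: the only intermediate value is $5>3$, so no decomposition into smaller inversions of $\pi$ exists, yet $(1,3)\in N(\pi)$. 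What actually happens in such cases is that $(c,d)$ already lies in some $N(j_\aa)$ directly, not as a consequence of the closure rule: if every intermediate value is outside $(c,d)$, there is a consecutive pair $v_i\geq d$, $v_{i+1}\leq c$ in the one-line segment from $d$ to $c$; then $(v_{i+1},v_i)$ is a lower wall, and checking the positions shows $d$ lands in the $L$-set and/or $c$ lands in the $R$-set of the corresponding lower arc, so $(c,d)$ is one of the four inversion types of that $j_\aa$. The induction can be repaired by adding this as a second base case, but as written your inductive step is false in two of its three branches.
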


Existence of canonical join representations in $L$ is related to a property called \emph{semidistributivity} of $L$.
\begin{definition}\label{def:semidistributive}
    A complete lattice $L$ is \newword{completely join semidistributive} if, for every $y,z\in L$, whenever $X\subseteq L$ satisfies $x\vee y = z$  for all $x\in X$, then also $(\bigwedge X) \vee y = z$. It is \newword{completely meet semidistributive} if, for every $y,z\in L$, whenever $X\subseteq L$ satisfies $x\wedge y = z$ for all $x\in X$, then also $(\bigvee X) \wedge y = z$. If $L$ is completely meet and join semidistributive, then we say $L$ is \newword{completely semidistributive}. If these conditions hold only when $X$ is a finite set, then we say $L$ is \newword{semidistributive}.
\end{definition}
If $L$ is finite, then the word ``completely'' is usually dropped. The definition of semidistributivity is \emph{a priori} not so interesting. For finite lattices, semidistributivity is accessed mostly via the following result \cite{Freese}.

\begin{proposition}\label{prop:finitecanonicaljoin}
    A finite lattice is (completely) join semidistributive if and only if every element has a canonical join representation.
\end{proposition}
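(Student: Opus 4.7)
The plan is to prove the two directions of the biconditional separately; the reverse direction is a short deduction from the definition, while the forward direction requires an explicit construction.

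For the reverse direction, I would assume every element of $L$ has a canonical join representation and deduce join semidistributivity. Given $a \vee x = b \vee x = z$, let $z = \bigvee U$ be the canonical join representation of $z$. The refinement property $U \ll \{a, x\}$ and $U \ll \{b, x\}$ forces each $u \in U$ to satisfy either $u \leq x$, or both $u \leq a$ and $u \leq b$; in either case $u \leq x \vee (a \wedge b)$. Taking the join over $u$ gives $z \leq x \vee (a \wedge b) \leq z$, proving the join semidistributivity identity.

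For the forward direction, assume $L$ is finite and join semidistributive, and fix $x \in L$. For each lower cover $y \lessdot x$, define $C_y = \{u \in L : u \vee y = x\}$. The key observation is that $C_y$ is nonempty (it contains $x$) and closed under pairwise meets by join semidistributivity---if $u_1 \vee y = u_2 \vee y = x$ then $(u_1 \wedge u_2) \vee y = x$---so by finiteness it has a unique minimum $j_y$. I would then show that $U \coloneqq \{j_y : y \lessdot x\}$ is the canonical join representation of $x$ by verifying three properties: (i) each $j_y$ is join-irreducible---if $j_y = \bigvee S$ with every $s < j_y$, then minimality of $j_y$ in $C_y$ gives $s \vee y < x$, hence $s \vee y = y$ since $y \lessdot x$, so $s \leq y$ for all $s \in S$, forcing $j_y \leq y$ and contradicting $j_y \vee y = x$; (ii) $\bigvee U = x$---otherwise the join would lie below some lower cover $y$, again forcing $j_y \leq y$; and (iii) for any $V$ with $\bigvee V = x$, $U \ll V$---for each $y$, since $\bigvee V = x > y$, some $v \in V$ has $v \not\leq y$, and since $v \leq x$ we have $v \vee y = x$, so $j_y \leq v$ by minimality.

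The main obstacle is the forward direction. The essential role of join semidistributivity is in guaranteeing that $C_y$ is closed under meets, making $j_y$ well defined. Beyond that, establishing irredundancy of $U$---so that $U$ is truly the canonical join representation as defined---requires a further application of semidistributivity to rule out strict comparisons $j_{y_1} < j_{y_2}$ and hence ensure that $U$ forms an antichain; otherwise a proper subset of $U$ would still join to $x$, conflicting with the minimality built into (iii).
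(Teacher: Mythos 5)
The paper does not actually prove \Cref{prop:finitecanonicaljoin}; it cites the result from Freese--Je\v{z}ek--Nation \cite{Freese} and moves on. The nearest thing to a proof appears later, in \Cref{lem:joinrepcharacter} and \Cref{prop:profinitesemidist}, which together establish the analogous equivalence for profinite complete lattices. Those arguments hinge on exactly the same construction you use: for each lower cover $y \lessdot x$, take $j_y$ to be the minimum of $\{z : z \vee y = x\}$, whose existence is where join semidistributivity enters. So your forward direction tracks the paper's technique closely, just specialized to the finite case, and your reverse direction is the routine deduction that also appears implicitly in \Cref{lem:joinrepcharacter}.

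Your argument is correct. One small remark on the irredundancy step, which you flagged as the remaining loose end: it does not actually require any further application of semidistributivity beyond the definition of $j_y$, nor does it need a detour through the antichain property together with your item (iii). For distinct lower covers $y_0, y \lessdot x$ one has $y_0 \vee y = x$, so $y_0 \in C_y$ and hence $j_y \leq y_0$. Therefore $\bigvee_{y \neq y_0} j_y \leq y_0 < x$, which gives irredundance directly. (This is precisely the one-line argument the paper uses inside the proof of \Cref{lem:joinrepcharacter}.) With that observation inserted, your outline becomes a complete and correct proof.
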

\Cref{prop:canonicaljoinSn,prop:finitecanonicaljoin} imply that $S_n$ is a semidistributive lattice. This fact was known much earlier; it is originally a theorem of Duquenne and Cherfouh \cite{Duquenne}.
For general infinite lattices, there is no such equivalence between complete semidistributivity and canonical join representations. 
However, for \emph{profinite} lattices (see \Cref{def:profinite}), 
completely join semidistributive lattices do satisfy an analog of \Cref{prop:finitecanonicaljoin} (see \Cref{prop:profinitesemidist}).


    


\subsection{The poset of regions}
In this section, we will consider the relationship between the weak order and convex geometry. To see this, recall that a \newword{hyperplane arrangement} in $\RR^n$ is a collection of linear hyperplanes. 


\begin{definition}
    The \newword{braid arrangement} is the hyperplane arrangement 
    \[\cB_n \coloneqq \{H_{ab} \mid 1 \leq a < b \leq n\}, \] 
    where $H_{ab}$ is the hyperplane
\( H_{ab} \coloneqq \{ (x_1,\ldots,x_n)\in \RR^n \mid x_a = x_b \}.   \)
\end{definition}
In \Cref{fig:braid}, we've depicted (a slice through) $\cB_3$. As illustrated in the figure, two points are in the same \newword{region} (connected component of $\RR^n\setminus \bigcup_{a<b}H_{ab}$) if and only if their coordinates are in the same order. Hence regions correspond to total orderings of the coordinates $x_1,\ldots,x_n$. We can think of these total orderings as the one-line notation of a permutation, so that for instance the permutation $231$ corresponds to the region whose points have coordinates satisfying $x_2<x_3<x_1$. This gives a bijection between elements of $S_n$ and regions of $\cB_n$.

We can get this bijection in another way: there is a group action of $S_n$ on $\RR^n$, where $\pi$ acts via
\( (x_1,\ldots, x_n) \mapsto (x_{\pi^{-1}(1)},\ldots,x_{\pi^{-1}(n)}). \)
The action on points induces an action on the regions of $\cB_n$, and the action on regions is simply transitive. Hence if we fix a choice of \newword{base region} corresponding to the identity permutation, then the group action induces a bijection between regions and elements of $S_n$. If we use as a base region the region $B = \{(x_1,\ldots,x_n)\mid x_1<\cdots < x_n\}$, then the bijection is
\[ \pi \mapsto \pi B = \{(x_1,\ldots,x_n) \mid x_{\pi(1)} < \cdots < x_{\pi(n)}\}. \]

We can describe the weak order using an ordering on the regions of $\cB_n$. 
\begin{definition}
Let $\cA$ be a hyperplane arrangement in $\RR^n$. Given regions $R_1$ and $R_2$, their \newword{separating set} is
\[ \cS(R_1,R_2) \coloneqq \{H \in \cA \mid R_1\text{ and }R_2\text{ are in different components of } \RR^n\setminus H_{ab}\}.  \]
Fix a base region $B$. The partial order on regions putting $R_1 \leq R_2$ if $\cS(B,R_1)\subseteq \cS(B,R_2)$ is called the \newword{poset of regions} of $\cA$ (relative to $B$).
\end{definition}
Specializing this to $\cB_n$, we find that
\[ \cS(B,\pi B) = \{H_{ab} \mid (a,b)\text{ is an inversion of }\pi\}.   \]
Hence, using the bijection $\pi\mapsto \pi B$, weak order is isomorphic to the poset of regions of $\cB_n$.

Given any permutation $\pi$, we can consider the region of the braid arrangement $\pi B$. The lower walls $(a,b)$ of $\pi$ correspond to the hyperplanes $H_{ab}$ in $\cS(B,\pi B)$ which are incident to $\pi B$. (Hence the term \emph{wall}.)

\subsection{Shards}
\begin{figure}
    \centering
    \begin{minipage}{.5\textwidth}
    \centering
    \begin{tikzpicture}[scale=1.7]
        \draw[thick] (0:-1.7) -- (0:1.7);
        \draw[thick] (60:-1.7) -- (60:1.7);
        \draw[thick] (120:-1.7) -- (120:1.7);
        
        \node (B) at (270:1.4) {$x_1<x_2<x_3$};
        \node (L1) at (210:1.4) {$x_2<x_1<x_3$};
        \node (L2) at (150:1.4) {$x_2<x_3<x_1$};
        \node (T) at (90:1.4) {$x_3<x_2<x_1$};
        \node (R2) at (30:1.4) {$x_3<x_1<x_2$};
        \node (R1) at (-30:1.4) {$x_1<x_3<x_2$};

        \node[below] at (60:-1.7) {$H_{12}$};
        \node[below] at ($(120:-1.7)+(.1,0)$) {$H_{23}$};
        \coordinate (CBSW) at (current bounding box.south west);
        \coordinate (CBNE) at (current bounding box.north east);
        \node[right] at (-7:1.4) {$H_{13}$};
        \draw[thick] (-2,-2) rectangle (2,2);
    \end{tikzpicture}
    \end{minipage}%
    \begin{minipage}{.5\textwidth}
    \centering
    \begin{tikzpicture}[scale=1.7] 
        \draw[thick] (0:-1.7) -- (0:-.1);
        \draw[thick] (0:.1) -- (0:1.7);
        \draw[thick] (60:-1.7) -- (60:1.7);
        \draw[thick] (120:-1.7) -- (120:1.7);
    


        \begin{scope}[shift={(270:1.2)}, scale=0.3, every node/.style={scale=0.75}]
        \node (A1) at (-1,0) {$1$};
    \node (A2) at (0,0) {$2$};
    \node (A3) at (1,0) {$3$};

        \end{scope}
        \begin{scope}[shift={(210:1.2)}, scale=0.3, every node/.style={scale=0.75}]
        \node (A1) at (-1,0) {$1$};
    \node (A2) at (0,0) {$2$};
    \node (A3) at (1,0) {$3$};

    \draw (A1.east) to[out=0,in=180] (A2.west);
        \end{scope}
        \begin{scope}[shift={(90:1.2)}, scale=0.3, every node/.style={scale=0.75}]
        \node (A1) at (-1,0) {$1$};
    \node (A2) at (0,0) {$2$};
    \node (A3) at (1,0) {$3$};

    \draw (A1.east) to[out=0,in=180] (A2.west);
    \draw (A2.east) to[out=0,in=180] (A3.west);        
        \end{scope}
        \begin{scope}[shift={(150:1.2)}, scale=0.3, every node/.style={scale=0.75}]
        \node (A1) at (-1,0) {$1$};
    \node (A2) at (0,0) {$2$};
    \node (A3) at (1,0) {$3$};

\draw (A1.east) to[out=0,in=180] (0,.5) to[out=0,in=180] (A3.west);      
        \end{scope}
        \begin{scope}[shift={(30:1.2)}, scale=0.3, every node/.style={scale=0.75}]
        \node (A1) at (-1,0) {$1$};
    \node (A2) at (0,0) {$2$};
    \node (A3) at (1,0) {$3$};

    \draw (A1.east) to[out=0,in=180] (0,-.5) to[out=0,in=180] (A3.west);        
        \end{scope}
        \begin{scope}[shift={(-30:1.2)}, scale=0.3, every node/.style={scale=0.75}]
        \node (A1) at (-1,0) {$1$};
    \node (A2) at (0,0) {$2$};
    \node (A3) at (1,0) {$3$};

    \draw (A2.east) to[out=0,in=180] (A3.west);
        \end{scope}

        \node[below] at (60:-1.7) {$\Sigma(1,2,\varnothing,\varnothing)$};
        \node[below] at ($(120:-1.7)+(.1,0)$) {$\Sigma(2,3,\varnothing,\varnothing)$};
        \coordinate (CBSW) at (current bounding box.south west);
        \coordinate (CBNE) at (current bounding box.north east);
        \node[right] at (-15:.5) {$\Sigma(1,3,\varnothing,\{2\})$};
        \node[left] at (15:-.5) {$\Sigma(1,3,\{2\},\varnothing)$};
        \draw[thick] (-2,-2) rectangle (2,2);
    \end{tikzpicture}
    \end{minipage}
    \begin{minipage}[t]{.5\textwidth}
    \vspace{1em}
    \captionsetup{width=.9\linewidth}
    \caption{The intersection of $\cB_3$ with a two-dimensional subspace of $\RR^3$. 
    Two points are in the same region of $\cB_3$ if their coordinates are in the same order.
    }
    \label{fig:braid}
    \end{minipage}%
    \begin{minipage}[t]{.5\textwidth}
    \vspace{1em}
    \captionsetup{width=.9\linewidth}
    \caption{The four shards of $\cB_3$ are the two hyperplanes $H_{12},H_{23}$ and the two halves of $H_{13}$. The regions are labeled by the lower arc diagram of their associated permutation.}
    \label{fig:shardsSn}
    \end{minipage}
\end{figure}

In this subsection, we will relate arcs to the geometry of the braid arrangement. Given an arc for $S_n$, we will associate a polyhedral cone in the braid arrangement $\cB_n$. 
To do so, define the half-spaces
\[H_{ab}^+ \coloneqq \{ (x_1,\ldots,x_n)\mid x_a< x_b\}  \qquad
 H_{ab}^- \coloneqq \{ (x_1,\ldots,x_n) \mid x_a> x_b\}. \]
Consider the arc $\aa = (a,c,L,R)$. For each $b\in L\cup R$, we define the half-spaces 
\[ H_{ab}^\aa \coloneqq \begin{cases}
    H_{ab}^+ &\text{if $c \in R$} \\
    H_{ab}^- &\text{if $c \in L$}
\end{cases}, \qquad H_{bc}^\aa \coloneqq \begin{cases}
    H_{bc}^+ &\text{if $c \in L$} \\
    H_{bc}^- &\text{if $c \in R$}
\end{cases}. \]
Now, we define the polyhedral cone
\[ \Sigma_\aa \coloneqq H_{ac}\cap \bigcap_{b\in L\cup R} H_{ab}^\aa = H_{ac}\cap\bigcap_{b\in L\cup R} H_{bc}^\aa. \]

The cones we construct this way are called \emph{shards}, and have a lattice-theoretic significance. First we will characterize them geometrically.
If $\cA$ is a hyperplane arrangement with a fixed choice of base region $B$, then we write $H^+$ to be the unique half-space bounded by $H$ and containing $B$. We write $H^-$ for the other half-space, so that $\RR^n = H^+ \sqcup H \sqcup H^-$.
\begin{definition}
    Let $\cA$ be a hyperplane arrangement with base region $B$. A hyperplane $H\in\cA$ is \newword{basic} if there is a line segment with one endpoint in $H$ and one endpoint in $B$, which is disjoint from all other hyperplanes. For distinct hyperplanes $H_1,H_2\in \cA$, the rank 2 arrangement \newword{spanned} by $H_1,H_2$ is $\cA'=\{H \in \cA \mid H\supseteq H_1\cap H_2 \}$.  We say that $H_1$ \newword{cuts} $H_2$ if $H_1$ is basic in $\cA'$ and $H_2$ is not basic in $\cA'$. If $H\in \cA$, then the connected components of $H\setminus \bigcup_{H'\text{ cuts } H} H'$ are called \newword{shards} of $H$. We write $\Sha(\cA)$ for the set of shards of hyperplanes in $\cA$.
\end{definition}

In the braid arrangement $\cB_n$, the rank two arrangement $\{H_{ab},H_{bc},H_{ac}\}$ is spanned by any two of its elements and has basic hyperplanes $H_{ab}$ and $H_{bc}$. If $a,b,c,d$ are pairwise distinct, then $H_{ab},H_{cd}$ span the rank two arrangement $\{H_{ab},H_{cd}\}$ and both of its hyperplanes are basic. Hence the hyperplanes which cut $H_{ac}$ are those of the form $H_{ab}$ or $H_{bc}$, for $a<b<c$. We conclude that the shards of $H_{ac}$ are the connected components of 
\[ H_{ac} \setminus \bigcup_{a<b<c} H_{ab} = H_{ac} \setminus \bigcup_{a<b<c} H_{bc}. \]
It straightforward to check that each $\Sigma_\aa$ is such a connected component. More is true.

\begin{proposition}
    The map $\aa \mapsto \Sigma_\aa$ is a bijection from arcs to $\Sha(\cB_n)$.
\end{proposition}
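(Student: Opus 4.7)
The plan is to use the description of shards established just above the statement: the shards of $H_{ac}\in\cB_n$ are the connected components of $H_{ac}\setminus\bigcup_{a<b<c}H_{ab}$. First I would observe that this complement decomposes into cells indexed by sign patterns. For each function $\sigma\colon\{a+1,\ldots,c-1\}\to\{+,-\}$ set
\[ C_\sigma \coloneqq H_{ac}\cap\bigcap_{a<b<c}\bigl\{x\in\RR^n : \mathrm{sgn}(x_b-x_a)=\sigma_b\bigr\}. \]
Since $x_a = x_c$ on $H_{ac}$, the hyperplane $H_{ab}$ meets $H_{ac}$ in the locus $x_b=x_a$, so the $C_\sigma$ are pairwise disjoint and exhaust $H_{ac}\setminus\bigcup_{a<b<c}H_{ab}$. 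Each $C_\sigma$ is the intersection of a linear subspace with finitely many open half-spaces, hence is convex (in particular connected), and it is manifestly nonempty (take $x_a=x_c=0$ and choose $x_b$ of the correct sign for each intermediate $b$). Therefore the $C_\sigma$ are exactly the shards of $H_{ac}$.

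Next, I would unwind the definition of $\Sigma_\aa$ for $\aa=(a,c,L,R)$. On $H_{ac}$ the relation $x_a=x_c$ reduces each constraint $H_{ab}^\aa$ to the condition $x_b<x_a$ when $b\in L$ and $x_b>x_a$ when $b\in R$ (the two descriptions of $\Sigma_\aa$ in the text are equal precisely because these two conditions agree with the corresponding constraints $H_{bc}^\aa$). Hence $\Sigma_\aa=C_{\sigma^\aa}$, where $\sigma^\aa_b=-$ if $b\in L$ and $\sigma^\aa_b=+$ if $b\in R$. The correspondence $(L,R)\leftrightarrow\sigma^\aa$ is a bijection between ordered partitions of $\{a+1,\ldots,c-1\}$ into two parts and sign functions on that set, so $\aa\mapsto\Sigma_\aa$ is a bijection from arcs with endpoints $(a,c)$ to shards of $H_{ac}$.

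To assemble this into the full statement, I would note that shards lying in distinct hyperplanes are disjoint and arcs with different endpoint pairs produce cones supported on distinct hyperplanes, so the endpoint-wise bijections glue into a single bijection from the set of all arcs to $\Sha(\cB_n)$.

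The only step that is not pure bookkeeping is the claim that each sign-pattern cell $C_\sigma$ is a single connected component of $H_{ac}\setminus\bigcup_{a<b<c}H_{ab}$; this reduces to the convexity of $C_\sigma$, which is immediate. Everything else follows by transparently matching the combinatorics of arcs to the combinatorics of sign patterns.
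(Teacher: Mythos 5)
Your argument is correct in substance and spells out exactly what the paper glosses over as ``straightforward to check.'' The sign-pattern decomposition $C_\sigma$ is the right way to see that the connected components of $H_{ac}\setminus\bigcup_{a<b<c}H_{ab}$ are indexed by subsets of $\{a+1,\ldots,c-1\}$, and the identification $\Sigma_\aa=C_{\sigma^\aa}$ is immediate once one reads the paper's definition of $H_{ab}^\aa$ as intended (the paper has an apparent typo, writing ``$c\in R$'' and ``$c\in L$'' where it should be $b\in R$, $b\in L$; you interpreted it correctly). Two small imprecisions are worth flagging. First, your reduction ``the claim reduces to the convexity of $C_\sigma$'' is slightly too compressed: convexity gives connectedness, but to conclude that each $C_\sigma$ is a \emph{maximal} connected subset you also need what you established just before --- that the $C_\sigma$ form a partition of the open set $H_{ac}\setminus\bigcup H_{ab}$ into relatively open pieces. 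A nonempty, open, closed, connected subset is a connected component; disjoint open convexity alone is not phrased as the full argument. Second, the statement ``shards lying in distinct hyperplanes are disjoint'' is false: in $\cB_4$, the shards $H_{12}$ and $H_{34}$ (each a full hyperplane) meet in a codimension-two subspace. What is true, and all you need, is that shards in distinct hyperplanes are \emph{distinct} (a shard of $H$ is an $(n-1)$-dimensional relatively open subset of $H$, hence has affine hull $H$, so a shard determines its hyperplane). With that correction the gluing of the endpoint-wise bijections into a bijection onto $\Sha(\cB_n)$ goes through exactly as you intend.
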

If $\aa = (a,b,L,R)$ then we also denote $\Sigma_\aa$ by $\Sigma(a,b,L,R)$.

\begin{example}
In $\cB_3$, there are four total shards: $H_{12}$ and $H_{23}$ are themselves shards, and $H_{13}$ is (the closure of) the union of two shards. In \Cref{fig:shardsSn}, the two shards in $H_{13}$ are the left and right halves of $H_{13}$. The left half is $\Sigma(1,3,\{2\},\varnothing)$ and the right half is $\Sigma(1,3,\varnothing,\{2\})$.
\end{example}

Now let's connect the combinatorics to the geometry. If $\pi$ is a permutation, and $\aa= (a,b,L,R)$ is a lower arc of $\pi$, then in particular $(a,b)$ is a lower wall of $\pi$. Hence the hyperplane $H_{ab}$ is incident to the region $\pi B$ in the braid arrangement. The hyperplane is a union of shards, and there is a unique shard $\Sigma$ so that $\Sigma$ is incident to $\pi B$ (in the sense that $\Sigma\cap \overline{\pi B}$ is full-dimensional in $\Sigma$). That unique shard $\Sigma$ is precisely $\Sigma_\aa$. If a shard $\Sigma$ is incident to $\pi B$ and contained in a lower wall of $\pi B$, then we say that $\Sigma$ is a \newword{lower shard} of $\pi B$.

\begin{example}
Consider the \JI $\pi=231$. Then the unique lower wall of $\pi$ is $(1,3)$, so the unique lower shard of $\pi B$ should be contained in $H_{13}$. Examining \Cref{fig:shardsSn}, we see that the unique lower shard of $\pi B$ is the left half of $H_{13}$, which has shard arc \begin{tikzpicture}[scale=.5]
\node[inner sep=0] (A1) at (0,0) {$1$};
\node[inner sep=0] at (1,0) {$2$};
\node[inner sep=0] (A3) at (2,0) {$3$};
\draw (0.25,0) to[out=0,in=180] (1,.75) to[out=0,in=180] (1.75,0);
\end{tikzpicture}. As expected, this is the shard arc associated to $231$.
\end{example}

\subsection{Lattice congruences on $S_n$}
\begin{definition}
Let $L$ be a complete lattice. A \newword{complete lattice congruence} is an equivalence relation $\equiv$ on $L$ so that if $x_i\equiv y_i$ for all $i\in I$, then $\bigvee_{i\in I} x_i = \bigvee_{i\in I} y_i$ and $\bigwedge_{i\in I} x_i = \bigwedge_{i\in I} y_i$. The \newword{quotient lattice} $L/{\equiv}$ is the set of $\equiv$-equivalence classes with their induced ordering. 

If $L'$ is also a complete lattice, then a \newword{complete lattice homomorphism} is a function $\eta:L\to L'$ so that $\eta(\bigwedge_{i\in I} x_i) = \bigwedge_{i\in I} \eta(x_i)$ and $\eta(\bigvee_{i\in I} x_i) = \bigvee_{i\in I} \eta(x_i)$.
\end{definition}

Given any complete lattice congruence $\equiv$ on a complete lattice $L$, there is a surjective complete lattice homomorphism $\eta: L \to L/{\equiv}$.

\begin{proposition}\label{prop:congruence}
Let $L$ be a complete lattice and $\equiv$ an equivalence relation on $L$. Then the following are equivalent:
\begin{itemize}
	\item[(a)] $\equiv$ is a complete lattice congruence on $L$;
	\item[(b)] There is a lattice $L'$ and a complete lattice homomorphism $\eta: L \to L'$ so that $x\equiv y$ if and only if $\eta(x)=\eta(y)$;
	\item[(c)] The following conditions all hold:
	\begin{itemize}
		\item Each equivalence class of $\equiv$ is an interval in $L$, and
		\item If $\pi^\downarrow(x)$ denotes the unique minimal element of $[x]_\equiv$, then $\pi^\downarrow : L \to L$ is order-preserving, and
		\item If $\pi^\uparrow(x)$ denotes the unique maximal element of $[x]_\equiv$, then $\pi^\uparrow : L \to L$ is order-preserving.
	\end{itemize}
\end{itemize}
\end{proposition}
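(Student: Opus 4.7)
The plan is to prove the cycle of implications (a) $\Rightarrow$ (b) $\Rightarrow$ (c) $\Rightarrow$ (a). The first two implications are essentially bookkeeping, while the real work lies in (c) $\Rightarrow$ (a), which requires simultaneously exploiting both monotonicity conditions.

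For (a) $\Rightarrow$ (b), I would take $L'$ to be the set of equivalence classes $L/{\equiv}$, with $\eta\colon L \to L'$ the quotient map. The congruence hypothesis makes $\bigvee_{L'} [x_i] \coloneqq [\bigvee_L x_i]$ and $\bigwedge_{L'} [x_i] \coloneqq [\bigwedge_L x_i]$ well-defined, giving $L'$ the structure of a complete lattice whose underlying partial order is induced from $L$. By construction $\eta$ is a complete lattice homomorphism with $\eta(x)=\eta(y)$ iff $x\equiv y$.

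For (b) $\Rightarrow$ (c), assume $\eta\colon L \to L'$ is a complete lattice homomorphism whose fibers are the $\equiv$-classes. Each class $[x]_\equiv = \eta^{-1}(\eta(x))$ is closed under arbitrary joins and meets (as $\eta$ preserves both), so it has a minimum $\pi^\downarrow(x)$ and maximum $\pi^\uparrow(x)$; it is an interval because $x \leq z \leq y$ with $\eta(x)=\eta(y)$ forces $\eta(z)=\eta(x)$ by monotonicity. To see $\pi^\downarrow$ is order-preserving, for $x \leq y$ I would compute
\[ \eta\bigl(\pi^\downarrow(x)\wedge \pi^\downarrow(y)\bigr) = \eta(x)\wedge\eta(y) = \eta(x\wedge y) = \eta(x), \]
so $\pi^\downarrow(x)\wedge\pi^\downarrow(y)$ lies in $[x]_\equiv$ below $\pi^\downarrow(x)$; but $\pi^\downarrow(x)$ is already the minimum of that class, so this meet equals $\pi^\downarrow(x)$, yielding $\pi^\downarrow(x)\leq\pi^\downarrow(y)$. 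The argument for $\pi^\uparrow$ is dual.

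The main obstacle is (c) $\Rightarrow$ (a). Given $x_i \equiv y_i$ for all $i \in I$, I want to show $\bigvee x_i \equiv \bigvee y_i$; the meet case is dual. Because each equivalence class is an interval, $x_i \equiv y_i$ entails $x_i \wedge y_i \equiv x_i \vee y_i$, so after replacing $x_i$ with $x_i \wedge y_i$ and $y_i$ with $x_i \vee y_i$ I may assume $x_i \leq y_i$. Then $\bigvee x_i \leq \bigvee y_i$, and using $y_i \leq \pi^\uparrow(x_i)$ together with monotonicity of $\pi^\uparrow$ gives
\[ \bigvee_i y_i \;\leq\; \bigvee_i \pi^\uparrow(x_i) \;\leq\; \pi^\uparrow\Bigl(\bigvee_i x_i\Bigr). \]
Thus $\bigvee y_i$ lies in $[\bigvee x_i,\, \pi^\uparrow(\bigvee x_i)]$, which is contained in $[\bigvee x_i]_\equiv$, giving $\bigvee x_i \equiv \bigvee y_i$. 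Dually, monotonicity of $\pi^\downarrow$ handles arbitrary meets. The delicate point --- and the reason (c) needs both monotonicity hypotheses rather than just the interval condition --- is that $\pi^\uparrow$ is precisely what traps $\bigvee y_i$ inside $[\bigvee x_i]_\equiv$ from above, while $\pi^\downarrow$ plays the analogous role for meets from below.
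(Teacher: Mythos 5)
The paper states this proposition without proof, citing it as a standard fact about complete lattice congruences, so there is no argument in the paper to compare against. Your cyclic argument (a) $\Rightarrow$ (b) $\Rightarrow$ (c) $\Rightarrow$ (a) is correct: the reduction to $x_i \le y_i$ via the interval property is valid (since $\bigvee(x_i\wedge y_i) \le \bigvee x_i,\ \bigvee y_i \le \bigvee(x_i\vee y_i)$, equivalence of the extremes and the interval property pin down both middle joins), and the use of monotonicity of $\pi^\uparrow$ to trap $\bigvee y_i$ inside $\bigl[\bigvee x_i,\ \pi^\uparrow(\bigvee x_i)\bigr]$ is exactly the right mechanism for the join case, with the dual argument via $\pi^\downarrow$ handling meets.
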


If $x$ is an element of the quotient lattice $L/{\equiv}$, then we will often abuse notation and write $\pi^{\downarrow}_\equiv (x)$ for the value of $\pi^{\downarrow}_\equiv$ on any representative of $x$.

\begin{corollary}\label{prop:sublatticecongruence}
    Let $L$ be a complete lattice and $L'\subseteq L$ a complete sublattice. Then any complete lattice congruence on $L$ restricts to a complete lattice congruence on $L'$.
\end{corollary}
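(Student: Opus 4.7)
The plan is to apply characterization (a) from \Cref{prop:congruence} directly, since it translates the congruence property into a self-contained closure condition on the equivalence relation. The key observation to unpack first is the precise meaning of ``complete sublattice'': a subset $L' \subseteq L$ is a complete sublattice when it is closed under arbitrary joins and arbitrary meets as computed in $L$, and the joins and meets in $L'$ coincide with those in $L$. This is exactly what makes the argument go through almost formally.

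With that in hand, I would let $\equiv$ be a complete lattice congruence on $L$, write $\equiv'$ for its restriction to $L' \times L'$, and verify the two clauses of characterization (a). First, $\equiv'$ is obviously an equivalence relation, since reflexivity, symmetry, and transitivity are inherited from $\equiv$. Second, suppose $\{x_i\}_{i\in I}$ and $\{y_i\}_{i\in I}$ are families in $L'$ with $x_i \equiv' y_i$ for all $i$. Then $x_i \equiv y_i$ in $L$, so by the congruence property of $\equiv$ we have $\bigvee_L x_i \equiv \bigvee_L y_i$ and $\bigwedge_L x_i \equiv \bigwedge_L y_i$. Because $L'$ is a complete sublattice, both sides of each relation lie in $L'$ and coincide with $\bigvee_{L'} x_i$, $\bigvee_{L'} y_i$, etc., so the corresponding $\equiv'$-relations hold in $L'$.

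There is essentially no obstacle here; the argument is a matter of parsing the definitions. The only place one needs to be slightly careful is in not confusing ``complete sublattice'' with the weaker notion of a subset that happens to be a complete lattice in its induced order. The former guarantees that the operations agree with those of $L$, which is what lets us transport the congruence condition between $L$ and $L'$ without any extra work. If desired, a one-line alternative proof using characterization (b) would be to note that the restriction of a complete lattice homomorphism $\eta : L \to L''$ to a complete sublattice $L' \subseteq L$ is again a complete lattice homomorphism (since joins and meets in $L'$ are computed in $L$), and its kernel is $\equiv'$.
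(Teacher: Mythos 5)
Your proof is correct. The paper itself gives no explicit proof of this corollary, leaving it as an immediate consequence of the definitions and \Cref{prop:congruence}; the argument you supply via characterization (a) is exactly the intended one, and your remark about characterization (b) gives an equally valid one-liner. One small note: the paper's displayed definition of a complete lattice congruence has a typo ($\bigvee x_i = \bigvee y_i$ where it should read $\bigvee x_i \equiv \bigvee y_i$); you correctly use the standard formulation with $\equiv$.
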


Let $\aa = (a,b,L,R)$ and $\aa'=(a',b',L',R')$ be arcs for $S_n$. Then we say that $\aa$ is a \newword{subarc} of $\aa'$ if $a \leq a' < b' \leq b$ and $L\subseteq L'$ and $R\subseteq R'$. Reading uses arcs to parametrize the lattice congruences of $S_n$. A congruence $\equiv$ on $S_n$ is said to \newword{contract} the arc $\aa=(a,b,L,R)$ if there is some $\pi$ so that $\aa\in \LowerArcs(\pi)$ and $(a,b)\cdot \pi \equiv \pi$. If $\aa$ is contracted by $\equiv$, then any $\pi$ having $\aa$ as a lower arc satisfies $(a,b)\cdot \pi \equiv \pi$. Write $\Arcs(\equiv)$ for the set of arcs which are not contracted by $\equiv$. We say a set of arcs $X$ is closed under taking subarcs if, whenever $\aa$ is a subarc of $\bb$ and $\bb\in X$, then $\aa$ in $X$.

\begin{proposition}
The map ${\equiv} \mapsto \Arcs(\equiv)$ is a bijection from complete lattice congruences on $S_n$ to sets of arcs which are closed under taking subarcs. 
\end{proposition}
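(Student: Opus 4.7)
The plan is to combine the standard congruence theory of finite semidistributive lattices with the explicit combinatorial data of arcs. Since $S_n$ is finite, a complete lattice congruence is just a lattice congruence, and any such $\equiv$ is determined by the subset of complete join-irreducibles it contracts, i.e., the set of $j \in \JIrrc(S_n)$ with $j \equiv j_*$. By \Cref{prop:wallscoverSn} together with the bijection $\aa \mapsto j_\aa$ from arcs to $\JIrrc(S_n)$, specifying such a congruence is equivalent to specifying the subset $\Arcs(\equiv)$ of non-contracted arcs, subject to some forcing condition; the proposition asserts that the relevant forcing condition is precisely ``closure under subarcs.''

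For injectivity, given $\equiv$ define
\[
  \pi^{\downarrow}_\equiv(\pi) \;:=\; \bigvee \bigl\{\, j_\aa : \aa \in \LowerArcs(\pi) \cap \Arcs(\equiv)\,\bigr\}.
\]
Using \Cref{prop:canonicaljoinSn} and semidistributivity of $S_n$, one verifies that $\pi^{\downarrow}_\equiv(\pi)$ is the unique minimum of $[\pi]_\equiv$: each $j_\aa$ with $\aa$ contracted collapses into $(j_\aa)_*$, and restricting the canonical join representation of $\pi$ to the non-contracted arcs yields the minimum representative of its class. Hence $\pi \equiv \pi'$ iff $\pi^{\downarrow}_\equiv(\pi) = \pi^{\downarrow}_\equiv(\pi')$, so $\equiv$ is recovered from $\Arcs(\equiv)$.

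The main technical step is the forcing claim: if $\aa$ is a subarc of $\bb$ and $\aa$ is contracted by $\equiv$, then $\bb$ is also contracted. I would prove this by building a chain $\aa = \aa_0 \subset \aa_1 \subset \cdots \subset \aa_k = \bb$ of subarcs, each step differing by a single elementary move (extending an endpoint by one, or assigning a single new element to $L$ or $R$), and reducing each step to a rank-$3$ computation inside the braid arrangement $\cB_n$. Concretely, each elementary step can be realized by a pair of cover relations $\pi' \lessdot \pi$ whose lower arcs include $\aa_i$ and $\aa_{i+1}$ respectively, with semidistributivity plus the closure operator of \Cref{prop:latticeSn} propagating the collapse from one step to the next. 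Equivalently, in the shard picture one shows that $\Sigma_\aa$ forces $\Sigma_\bb$ exactly when $\aa$ is a subarc of $\bb$; this local verification is the main obstacle.

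For surjectivity, given a subarc-closed set $X$ of arcs, define $\equiv_X$ on $S_n$ by $\pi \equiv_X \pi'$ iff $\bigvee\{j_\aa : \aa \in \LowerArcs(\pi) \cap X\} = \bigvee\{j_\aa : \aa \in \LowerArcs(\pi') \cap X\}$. The subarc-closed hypothesis on $X$ is exactly what makes the candidate projection $\pi \mapsto \bigvee\{j_\aa : \aa \in \LowerArcs(\pi)\cap X\}$ order-preserving on $S_n$; \Cref{prop:congruence}(c) then identifies $\equiv_X$ as a lattice congruence. A direct check using the same canonical join machinery gives $\Arcs(\equiv_X) = X$, completing the bijection.
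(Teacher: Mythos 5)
Your outline follows the standard route for classifying congruences of a finite lattice --- reduce to the set of contracted join-irreducibles, then identify the forcing preorder on them --- but the actual mathematical content of the proposition, namely that the forcing preorder on arcs is \emph{exactly} the subarc relation, is asserted and then deferred (``this local verification is the main obstacle''). That deferred step is not a side calculation; both directions of it are indispensable. You need (i) that contracting $\aa$ forces contracting every arc $\bb$ of which $\aa$ is a subarc, so that $\Arcs(\equiv)$ really is subarc-closed, and (ii) that the forcing preorder contains no relations beyond those generated by subarcs, so that every subarc-closed set is realized --- which is what surjectivity requires. Neither is carried out. Your surjectivity paragraph also needs more than one candidate projection being order-preserving: \Cref{prop:congruence}(c) asks that the classes be intervals and that \emph{both} $\pi^\downarrow$ and $\pi^\uparrow$ be order-preserving, and you have not addressed the interval condition or the dual map at all.

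There is also a concrete error in the injectivity argument. Your formula for $\pi^\downarrow_\equiv(\pi)$ --- drop the contracted arcs from the canonical join representation of $\pi$ --- does not in general produce the minimum of $[\pi]_\equiv$; the correct procedure replaces each contracted $j_\aa$ by $\pi^\downarrow_\equiv\bigl((j_\aa)_*\bigr)$ and recurses. This already fails in $S_3$: the equivalence relation with a single nontrivial class $\{213,231\}$ is a valid lattice congruence, contracting precisely the arc $(1,3,\{2\},\varnothing)$, and the uncontracted-arc set $\{(1,2,\varnothing,\varnothing),(2,3,\varnothing,\varnothing),(1,3,\varnothing,\{2\})\}$ is subarc-closed; yet your formula yields $\pi^\downarrow_\equiv(231)=\bigvee\varnothing=123$, while the true minimum of $[231]_\equiv$ is $213$. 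The injectivity \emph{conclusion} is still correct (a finite lattice congruence is determined by its set of contracted join-irreducibles), but the argument you offer for it does not hold up, and needs to be replaced by the recursive description of $\pi^\downarrow_\equiv$.
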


\subsection{The affine symmetric group}
\begin{figure}
\centering
\begin{tikzpicture}
	\draw (-3.4,-3.4) rectangle (3.4,.7);	

		\draw[fill] (-90:2.3) circle (1.2pt) -- ({.5*atan2(0,0+1)+.5*atan2(1,1+1)-45}:2.3) circle (1.2pt)
		-- ({.5*atan2(1,1+1)+.5*atan2(2,2+1)-45}:2.3) circle (1.2pt)
		-- ({.5*atan2(2,2+1)+.5*atan2(3,3+1)-45}:2.3) circle (1.2pt)
		-- ({.5*atan2(3,3+1)+.5*atan2(4,4+1)-45}:2.3) circle (.8pt)
		-- ({.5*atan2(4,4+1)+.5*atan2(5,5+1)-45}:2.3) circle (.6pt);

        \draw (-90:2.7) node {$[1,2]$};
        \draw ({.5*atan2(0,0+1)+.5*atan2(1,1+1)-45}:2.8) node[scale=.8] {$[2,1]$};
        \draw ({.5*atan2(1,1+1)+.5*atan2(2,2+1)-48}:2.7) node[scale=.6] {$[-1,4]$};
        \draw ({.5*atan2(2,2+1)+.5*atan2(3,3+1)-45}:2.7) node[scale=.5] {$[4,-1]$};
        
		\foreach \x in {1,2,...,3} {
			\fill ({atan2(4.5,4.5+1)-45+1.4668*\x}:2.3) circle (.6pt);
		};

		\draw[fill] (-90:2.3) circle (1.2pt) -- ({.5*atan2(0+1,0)+.5*atan2(1+1,1)-45}:-2.3) circle (1.2pt)
		-- ({.5*atan2(1+1,1)+.5*atan2(2+1,2)-45}:-2.3) circle (1.2pt)
		-- ({.5*atan2(2+1,2)+.5*atan2(3+1,3)-45}:-2.3) circle (1.2pt)
		-- ({.5*atan2(3+1,3)+.5*atan2(4+1,4)-45}:-2.3) circle (.8pt)
		-- ({.5*atan2(4+1,4)+.5*atan2(5+1,5)-45}:-2.3) circle (.6pt);

        \draw ({.5*atan2(0+1,0)+.5*atan2(1+1,1)-45}:-2.8) node[scale=.8] {$[0,3]$};
        \draw ({.5*atan2(1+1,1)+.5*atan2(2+1,2)-42}:-2.65) node[scale=.6] {$[3,0]$};
        \draw ({.5*atan2(2+1,2)+.5*atan2(3+1,3)-45}:-2.7) node[scale=.5] {$[-2,5]$};

		\foreach \x in {1,2,...,3} {
			\fill ({atan2(4.5+1,4.5)-45-1.4668*\x}:-2.3) circle (.6pt);
		};
\end{tikzpicture}
\caption{The Hasse diagram of weak order on $\tS_2$. Each affine permutation is labeled by its window notation.}
\label{fig:S2tildeweak}
\end{figure}
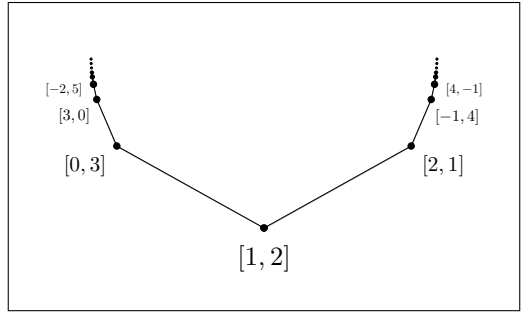
We now briefly introduce the affine symmetric group, which is a Coxeter group that will feature in later sections.
\begin{definition}\label{def:Stilde}
    The \newword{affine symmetric group} $\tS_n$ is the group of bijections $\tpi:\ZZ\to\ZZ$ satisfying:
    \begin{itemize}
        \item[(a)] $\tpi(i+n) = \tpi(i) + n$ for all $i\in \ZZ$, and
        \item[(b)] $\displaystyle\sum_{i=1}^n\tpi(i) = \sum_{i=1}^ni$.
    \end{itemize}
\end{definition}
Elements of $\tS_n$ are \newword{affine permutations}. The one-line notation of an affine permutation is defined similarly to a usual permutation, so that for instance the one-line notation of the identity is:
\[ \ldots,-1,0,1,2,3,4,5,\ldots. \]
Condition (b) in \Cref{def:Stilde} lets us recover an affine permutation from its one-line notation. We abbreviate affine permutations via \newword{window notation}: given a sequence of $n$ integers $x_1,\ldots,x_n$ which have distinct residue classes mod $n$, we write $[x_1,x_2,\ldots, x_n]$ for the unique affine permutation whose one-line notation contains $x_1,\ldots,x_n$ as a consecutive subsequence. For instance, in $\tS_3$, the windows $[1,2,3]$ and $[0,1,2]$ both represent the identity permutation, whereas $[1,0,2]$ represents the permutation
\[ \ldots,-3,-1,1,0,2,4,3,5,\ldots. \]
The window notations for the elements of $\tS_2$ are shown in \Cref{fig:S2tildeweak}.
We can partially order $\tS_2$ by the \newword{weak order}, which puts $\tpi_1 \leq \tpi_2$ if $\tpi_1^{-1}(b) < \tpi_1^{-1}(a)$ implies $\tpi_2^{-1}(b) < \tpi_2^{-1}(b)$ for all pairs $(a,b)$ with $a<b$.

\subsection{Extended weak order}
In this section we describe the extended weak order on a Coxeter group, which is a poset introduced by Matthew Dyer. 

\begin{definition}
Let $V$ be a real vector space and $\Psi \subseteq V$ be a set of vectors. A subset $X\subseteq \Psi$ is \newword{closed} if, for any vectors $\alpha,\beta,\gamma\in \Psi$ so that $\gamma \in \RR_{\geq 0}\alpha + \RR_{\geq 0}\beta$, if $\alpha$ and $\beta$ are both in $X$ then so is $\gamma$. A subset $X\subseteq \Psi$ is \newword{coclosed} if $\Psi\setminus X$ is closed. A subset $X\subseteq \Psi$ is \newword{biclosed} if $X$ is closed and coclosed. The \newword{Dyer poset} of $\Psi$, denoted $\Dyer(\Psi)$, is the poset of biclosed sets under containment order.
\end{definition}

Each Coxeter group $W$ has an associated \emph{positive root system}, which is a set of vectors $\Phi^+_W$. We abbreviate $\Dyer(\Phi^+_W)$ to $\Dyer(W)$. In this case the Dyer poset is also called the \newword{extended weak order} of $W$. Note that the extended weak order of $W$ is not a partial order on $W$, but is rather a partial order on the biclosed subsets of $\Phi^+_W$.

\begin{figure}
    \centering
    \begin{minipage}{.5\textwidth}
    \centering
    \begin{tikzpicture}
					\node (B) at (270:1.4) {$\varnothing$};
					\node (L1) at (210:1.4) {};
					\node (L2) at (150:1.4) {};
					\node (T) at (90:1.4) {};
					\node (R2) at (30:1.4) {};
					\node (R1) at (-30:1.4) {};
					\draw (B) -- (L1) -- (L2) -- (T) -- (R2) -- (R1) -- (B);
					
					\begin{scope}[shift=(T),scale=.3]
						\fill[fill=white] (0,0) circle (1);
						\draw[thick,blue,-stealth] (0,0) -- (0+30:1);
						\draw[thick,blue,-stealth] (0,0) -- (120+30:1);
						\draw[thick,blue,-stealth] (0,0) -- (60+30:1);
					\end{scope}
					
					\begin{scope}[shift=(L1),scale=.3]
						\fill[fill=white] (0,0) circle (1);
						\draw[thick,blue,-stealth] (0,0) -- (120+30:1);
					\end{scope}
					
					\begin{scope}[shift=(L2),scale=.3]
						\fill[fill=white] (0,0) circle (1);
						\draw[thick,blue,-stealth] (0,0) -- (120+30:1);
						\draw[thick,blue,-stealth] (0,0) -- (60+30:1);
					\end{scope}
					
					\begin{scope}[shift=(R1),scale=.3]
						\fill[fill=white] (0,0) circle (1);
						\draw[thick,blue,-stealth] (0,0) -- (0+30:1);
					\end{scope}
					
					\begin{scope}[shift=(R2),scale=.3]
						\fill[fill=white] (0,0) circle (1);
						\draw[thick,blue,-stealth] (0,0) -- (0+30:1);
						\draw[thick,blue,-stealth] (0,0) -- (60+30:1);
					\end{scope}
				\end{tikzpicture}
    \end{minipage}%
    \begin{minipage}{.5\textwidth}
    \centering
    \begin{tikzpicture}[yscale=1]
				\node (B) at (0,0) {$\varnothing$};
				\draw[fill,] (B) 
				-- ++(1.707,.307) node[right] (R1) {} circle (1.2pt)
				(-1.707,.307) node[left] (L1) {} circle (1.2pt) -- (B);
				\draw[fill,] (-1.707,.307) -- ++(0,.5) node[left] (L2) {} circle (1.2pt)
				-- ++ (0, .3) circle (1.2pt) -- ++(0, .2) circle (1.2pt)
				++ (0, .1) circle (.3pt) ++ (0, .1) circle (.3pt)
				++ (0, .1) circle (.3pt);
				\draw[fill,] (1.707,.307) -- ++(0,.5) node[right] (R2) {} circle (1.2pt)
				-- ++ (0, .3) circle (1.2pt) -- ++(0, .2) circle (1.2pt)
				++ (0, .1) circle (.3pt) ++ (0, .1) circle (.3pt)
				++ (0, .1) circle (.3pt);
				
				\draw[fill] (-1.707, .307+1.4) node[left] (ML) {} circle (1.2pt)
				(1.707, .307+1.4) node[right] (MR) {} circle (1.2pt);
				
				\begin{scope}[shift={(0,.307+1.4)}]
					\draw[fill] (-1.707,.1) circle (.3pt) ++ (0,.1) circle (.3pt) ++ (0,.1) circle (.3pt) ++ (0, .1) circle (1.2pt) -- ++ (0,.2) circle (1.2pt) 
					-- ++(0,.3) node[left] (L3) {} circle (1.2pt)
					-- ++(0,.5) node[left] (L4) {} circle (1.2pt)
					-- ++(1.707,.307) node[above] (T) {} circle (1.2pt);
					
					\draw[fill] (1.707,.1) circle (.3pt) ++ (0,.1) circle (.3pt) ++ (0,.1) circle (.3pt) ++ (0, .1) circle (1.2pt) -- ++ (0,.2) circle (1.2pt) 
					-- ++(0,.3) node[right] (R3) {} circle (1.2pt)
					-- ++(0,.5) node[right] (R4) {} circle (1.2pt)
					-- ++(-1.707,.307);
				\end{scope}
			
				\begin{scope}[shift=(T),scale=.2]
					\clip (-2.5,0) rectangle (2.5,4);
					\foreach \x in {0,1,...,20} {
						\draw[blue,-stealth] (0,0) -- ({atan2(\x,\x+1)+45}:{sqrt(\x^2+(\x+1)^2)});
						\draw[blue,-stealth] (0,0) -- ({atan2(\x+1,\x)+45}:{sqrt(\x^2+(\x+1)^2)});
					};
					\fill[blue] (0,0) -- ({atan2(20+1,20)+45}:{sqrt(20^2+(20+1)^2)}) -- (90:4);
					\fill[blue] (0,0) -- ({atan2(20,20+1)+45}:{sqrt(20^2+(20+1)^2)}) -- (90:4);
				\end{scope}
			
				\begin{scope}[shift=(R1),scale=.2]
					\clip (-2.5,0) rectangle (2.5,4);
					\foreach \x in {0} {
						\draw[blue,-stealth] (0,0) -- ({atan2(\x,\x+1)+45}:{sqrt(\x^2+(\x+1)^2)});
					};
				\end{scope}
			
				\begin{scope}[shift=(R2),scale=.2]
					\clip (-2.5,0) rectangle (2.5,4);
					\foreach \x in {0,1} {
						\draw[blue,-stealth] (0,0) -- ({atan2(\x,\x+1)+45}:{sqrt(\x^2+(\x+1)^2)});
					};
				\end{scope}
			
				\begin{scope}[shift=(ML),scale=.2]
					\clip (-2.5,0) rectangle (2.5,4);
					\foreach \x in {0,1,...,20} {
						\draw[blue,-stealth] (0,0) -- ({atan2(\x+1,\x)+45}:{sqrt(\x^2+(\x+1)^2)});
					};
					\fill[blue] (0,0) -- ({atan2(20+1,20)+45}:{sqrt(20^2+(20+1)^2)}) -- (90:4);
				\end{scope}
			
				\begin{scope}[shift=(MR),scale=.2]
					\clip (-2.5,0) rectangle (2.5,4);
					\foreach \x in {0,1,...,20} {
						\draw[blue,-stealth] (0,0) -- ({atan2(\x,\x+1)+45}:{sqrt(\x^2+(\x+1)^2)});
					};
					\fill[blue] (0,0) -- ({atan2(20,20+1)+45}:{sqrt(20^2+(20+1)^2)}) -- (90:4);
				\end{scope}
			
				\begin{scope}[shift=(L1),scale=.2]
					\clip (-2.5,0) rectangle (2.5,4);
					\foreach \x in {0} {
						\draw[blue,-stealth] (0,0) -- ({atan2(\x+1,\x)+45}:{sqrt(\x^2+(\x+1)^2)});
					};
				\end{scope}
				
				\begin{scope}[shift=(L2),scale=.2]
					\clip (-2.5,0) rectangle (2.5,4);
					\foreach \x in {0,1} {
						\draw[blue,-stealth] (0,0) -- ({atan2(\x+1,\x)+45}:{sqrt(\x^2+(\x+1)^2)});
					};
				\end{scope}
			
				\begin{scope}[shift=(R4),scale=.2]
					\clip (-2.5,0) rectangle (2.5,4);
					\foreach \x in {0,1,...,20} {
						\draw[blue,-stealth] (0,0) -- ({atan2(\x,\x+1)+45}:{sqrt(\x^2+(\x+1)^2)});
						\draw[blue,-stealth] (0,0) -- ({atan2(\x+2,\x+1)+45}:{sqrt((\x+1)^2+(\x+2)^2)});
					};
					\fill[blue] (0,0) -- ({atan2(20+1,20)+45}:{sqrt(20^2+(20+1)^2)}) -- (90:4);
					\fill[blue] (0,0) -- ({atan2(20,20+1)+45}:{sqrt(20^2+(20+1)^2)}) -- (90:4);
				\end{scope}
			
				\begin{scope}[shift=(L4),scale=.2]
					\clip (-2.5,0) rectangle (2.5,4);
					\foreach \x in {0,1,...,20} {
						\draw[blue,-stealth] (0,0) -- ({atan2(\x+1,\x+2)+45}:{sqrt((\x+1)^2+(\x+2)^2)});
						\draw[blue,-stealth] (0,0) -- ({atan2(\x+1,\x+0)+45}:{sqrt((\x+0)^2+(\x+1)^2)});
					};
					\fill[blue] (0,0) -- ({atan2(20+1,20)+45}:{sqrt(20^2+(20+1)^2)}) -- (90:4);
					\fill[blue] (0,0) -- ({atan2(20,20+1)+45}:{sqrt(20^2+(20+1)^2)}) -- (90:4);
				\end{scope}
			\end{tikzpicture}
    \end{minipage}
    \begin{minipage}[t]{.5\textwidth}
    \vspace{1em}
    \captionsetup{width=.9\linewidth}
    \caption{The poset $\Dyer(S_3)$.}
    \label{fig:DyerS3}
    \end{minipage}%
    \begin{minipage}[t]{.5\textwidth}
    \vspace{1em}
    \captionsetup{width=.9\linewidth}
    \caption{The poset $\Dyer(\tS_2)$. 
    }
    \label{fig:DyertS2}
    \end{minipage}
\end{figure}

\begin{example}
Let $\be_1,\ldots,\be_n$ be the standard basis of $\RR^n$. 
Define the vectors $\alpha_{a,b} \coloneqq \be_b-\be_a$. Then the positive root system of $S_n$ lives in $\RR^n$ as the subset
\[ \Phi^+_{S_n} \coloneqq \{ \alpha_{a,b} \mid 1 \leq a < b \leq n \}. \]
The biclosed sets in $\Phi^+_{S_3}$ are shown in \Cref{fig:DyerS3}. The biclosed sets in $\Phi^+_{S_n}$ are easy to describe. The following has been proven in many forms: early results include \cite{Cellini} and \cite{Ito}.
\begin{proposition}
A subset $X$ of $\Phi^+_{S_n}$ is biclosed if and only if 
\[X = \{ \alpha_{a,b} \mid (a,b) \in N(\pi) \} \] 
for some permutation $\pi\in S_n$. In particular, the posets $\Dyer(S_n)$ and $\WO(S_n)$ are isomorphic.
\end{proposition}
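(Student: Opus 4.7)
The plan is to reformulate biclosedness combinatorially and then induct on $n$ to produce the permutation.

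The only way a positive root $\alpha_{a,c}$ can lie in $\RR_{\ge 0}\alpha+\RR_{\ge 0}\beta$ for $\alpha,\beta\in\Phi^+_{S_n}$ with $\alpha\ne\beta$ is via the decomposition $\alpha_{a,c}=\alpha_{a,b}+\alpha_{b,c}$ with $a<b<c$. So identifying $X\subseteq\Phi^+_{S_n}$ with $N=\{(a,b):\alpha_{a,b}\in X\}\subseteq T$, biclosedness becomes: for every triple $a<b<c$, $(a,b),(b,c)\in N\Rightarrow(a,c)\in N$ (closed) and $(a,c)\in N\Rightarrow(a,b)\in N\text{ or }(b,c)\in N$ (coclosed). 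That $N(\pi)$ satisfies both conditions is a direct check using $(x,y)\in N(\pi)\Leftrightarrow \pi^{-1}(y)<\pi^{-1}(x)$: for closedness, $\pi^{-1}(b)<\pi^{-1}(a)$ and $\pi^{-1}(c)<\pi^{-1}(b)$ imply $\pi^{-1}(c)<\pi^{-1}(a)$; coclosedness follows from a similar case split on the position of $\pi^{-1}(b)$ relative to $\pi^{-1}(a),\pi^{-1}(c)$.

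For the converse, I would induct on $n$, with $n=1$ trivial. Given biclosed $N$, set $N':=N\cap\{(a,b):b\le n-1\}$; this is biclosed in $\Phi^+_{S_{n-1}}$, so by induction $N'=N(\pi')$ for some $\pi'\in S_{n-1}$. The goal is to produce $\pi\in S_n$ by inserting $n$ into the one-line notation of $\pi'$ at a position such that $N(\pi)=N$. The right position is determined by $S:=\{i<n:(i,n)\in N\}$: after inserting $n$ at position $k$, the new inversions involving $n$ are precisely the set of values appearing to the right of $n$, so we need $S$ to coincide with the last $n-k$ entries of $\pi'$. In other words, the key claim is that $S$ forms a \emph{suffix} of the one-line notation of $\pi'$: if $i\in S$ and $i$ precedes $j$ in the one-line notation of $\pi'$, then $j\in S$. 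Granting this, inserting $n$ just before this suffix yields $\pi$ with $N(\pi)=N$ by construction.

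The suffix property is the main obstacle, but it follows from a short case analysis on $i$ versus $j$. If $i<j$, then $i$ preceding $j$ in $\pi'$ means $(i,j)\notin N'\subseteq N$, so coclosedness applied to $i<j<n$ together with $(i,n)\in N$ forces $(j,n)\in N$. If $j<i$, then $i$ preceding $j$ in $\pi'$ means $(j,i)\in N'\subseteq N$, so closedness applied to $j<i<n$ together with $(i,n)\in N$ again forces $(j,n)\in N$. This completes the induction. The ``in particular'' statement is then immediate: the assignment $\pi\mapsto\{\alpha_{a,b}:(a,b)\in N(\pi)\}$ is a bijection by what was just shown, and it is order-preserving for containment on both sides essentially by the definition of weak order in \Cref{def:weakorderSn}.
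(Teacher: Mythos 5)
The paper offers no proof of this proposition; it simply cites Cellini and Ito with the remark that the result ``has been proven in many forms,'' so there is no in-paper argument to compare against. Your direct inductive proof is correct and self-contained. Reducing biclosedness to the pair of conditions on triples $a<b<c$ is valid in type $A$, because the only nontrivial relation $\gamma\in\RR_{\ge 0}\alpha+\RR_{\ge 0}\beta$ among distinct positive roots of $S_n$ is $\alpha_{a,c}=\alpha_{a,b}+\alpha_{b,c}$; the check that $N(\pi)$ satisfies both conditions is routine; and the inductive step — restrict to $\{(a,b):b\le n-1\}$, obtain $\pi'$, and insert $n$ just before the set $S=\{i:(i,n)\in N\}$ after showing $S$ is a value-suffix of $\pi'$'s one-line notation — is exactly the right move, with the case split $i<j$ (coclosedness) versus $j<i$ (closedness) doing the real work. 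One small point of hygiene: when you write ``$(i,j)\notin N'\subseteq N$'' to conclude $(i,j)\notin N$, the inclusion $N'\subseteq N$ is not by itself enough; what you actually use is that $j\le n-1$ forces $(i,j)\in N\Leftrightarrow(i,j)\in N'$ by the definition $N'=N\cap\{(a,b):b\le n-1\}$. That equivalence is implicit in your setup but worth stating, since as written the notation suggests a non sequitur.
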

\end{example}

\begin{example}\label{tSnroots}
Let $\widetilde\RR^{n}$ denote the $(n+1)$-dimensional vector space with basis vectors $\tbe_1,\ldots,\tbe_n,\delta$. Define $\tbe_{i+kn}\coloneqq \tbe_i + k\delta$. In this way, $\tbe_i$ denotes some vector in $\widetilde\RR^n$ for each $i\in\ZZ$.  Define the vectors $\talpha_{a,b}\coloneqq \tbe_b - \tbe_a$.
Then the positive root system of the affine symmetric group $\tS_n$ lives in $\widetilde\RR^n$ as the subset
\[ \{ \talpha_{a,b} \mid a<b,~ a\not\equiv b \mod n \}. \]
The biclosed subsets of $\Phi^+_{\tS_2}$ are shown in \Cref{fig:DyertS2}. The biclosed sets in $\Phi^+_{\tS_n}$ are no longer easy to describe; we shall recall their characterization in \Cref{thm:TITOtoDyer}. However, the \emph{finite} biclosed sets admit a simple description. 
\begin{proposition}
A finite subset $X$ of $\Phi^+_{\tS_n}$ is biclosed if and only if 
\[ X = \{ \talpha_{a,b} \mid a < b \text{ and } \tpi^{-1}(b) < \tpi^{-1}(a) \} \]
for some $\tpi \in \tS_n$.
In particular, $\WO(\tS_n)$ is isomorphic to the subposet of $\Dyer(\tS_n)$ consisting of finite biclosed sets.
\end{proposition}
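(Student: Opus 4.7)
The plan is to prove the two directions of the equivalence separately. For the forward direction, given $\tpi \in \tS_n$ and $X = \{\talpha_{a,b} : a<b,\ \tpi^{-1}(b)<\tpi^{-1}(a)\}$, I would first argue that $X$ is finite by using the translation-invariance $\tpi^{-1}(i+kn) = \tpi^{-1}(i)+kn$: only finitely many shift-orbits of inverted pairs can exist, and since $\talpha_{a,b}=\talpha_{a+n,b+n}$, each orbit contributes exactly one root. For biclosedness, I would classify the pairwise positive-cone relations among positive roots of $\tS_n$: after possibly shifting either summand by a multiple of $n$, the only relations producing a new positive root are the chain relations $\talpha_{a,b}+\talpha_{b,c}=\talpha_{a,c}$ (plus some additional large-coefficient relations between $\alpha$ and $\delta-\alpha$ in the degenerate case $n=2$). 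Once this classification is in hand, both closure and coclosure of $X$ reduce to the transitivity of $<$ on the image of $\tpi^{-1}$.

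For the reverse direction, I would induct on $|X|$, with base case $X=\varnothing$ giving $\tpi=\mathrm{id}$. The inductive step relies on the following lemma, which is the heart of the proof:

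\textbf{Lemma.} Every nonempty finite biclosed $X\subseteq\Phi^+_{\tS_n}$ contains a simple root $\talpha_{i,i+1}$ for some $i$.

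Granted the lemma, I would let $s_i\in\tS_n$ be the affine simple reflection corresponding to $\talpha_{i,i+1}\in X$ and consider $X'\coloneqq s_i\cdot (X\setminus\{\talpha_{i,i+1}\})$, where $s_i$ acts on roots in the standard way. Since $s_i$ permutes $\Phi^+\setminus\{\talpha_{i,i+1}\}$ and sends $\talpha_{i,i+1}$ to its negative, $X'$ is a finite biclosed set of size $|X|-1$. By induction $X'=N(\tpi')$ for some $\tpi'$, and a direct verification shows $X=N(s_i\tpi')$.

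To prove the lemma, I would argue by contradiction: suppose no simple root lies in $X$, so every simple root lies in the complement $Y\coloneqq\Phi^+_{\tS_n}\setminus X$, which is closed by coclosure of $X$. I claim every positive root then lies in $Y$, contradicting $X\neq\varnothing$. For $n\geq 3$, I would induct on the height $b-a$ of $\talpha_{a,b}$: whenever $b-a\geq 2$, a counting argument produces $c\in(a,b)$ with $c\not\equiv a,b\pmod n$, so that $\talpha_{a,b}=\talpha_{a,c}+\talpha_{c,b}$ exhibits $\talpha_{a,b}$ as a sum of two smaller-height positive roots already in $Y$, giving $\talpha_{a,b}\in Y$ by closure. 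For $n=2$ this subdivision fails (e.g.\ when $b-a=3$), and I would instead use the explicit identities $k\delta+\alpha=(k+1)\alpha+k(\delta-\alpha)$ and $k\delta-\alpha=(k-1)\alpha+k(\delta-\alpha)$ for $k\geq 1$ to exhibit every positive root of $\tS_2$ directly in the positive cone of the two simple roots $\alpha=\talpha_{1,2}$ and $\delta-\alpha=\talpha_{2,3}$, so that a single application of coclosure to $\{\alpha,\delta-\alpha\}\subseteq Y$ forces every positive root into $Y$. Finally, the isomorphism between $\WO(\tS_n)$ and the subposet of finite biclosed sets in $\Dyer(\tS_n)$ is immediate once the bijection is established, since both orderings are defined by containment of inversion sets. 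I expect the simple-root lemma to be the main obstacle, particularly the degenerate case $n=2$; the rest is a routine transcription of the classical Coxeter-theoretic proof that inversion sets parametrize group elements.
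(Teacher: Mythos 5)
The paper states this proposition without proof, treating it as a known background fact (it cites \cite{Cellini} and \cite{Ito} for the finite-type analogue; the affine version is likewise classical). So there is nothing in the paper's exposition to compare against directly, and your proof should be judged on its own merits. Your overall architecture — both directions, with the reverse direction by induction on $|X|$ hinging on a lemma that every nonempty finite biclosed set contains a simple root — is exactly the canonical Coxeter-theoretic proof, and it is correct.

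A few steps are compressed enough that it is worth flagging what is really being used. When you form $X'=s_i\cdot(X\setminus\{\talpha_{i,i+1}\})$ and assert it is biclosed, the fact that $s_i$ permutes $\Phi^+\setminus\{\talpha_{i,i+1}\}$ is not quite sufficient on its own: you also need that the simple root $\talpha_{i,i+1}$ is an extreme ray of the positive-root cone, so it cannot arise as $s\alpha+t\beta$ with $\alpha,\beta\in\Phi^+\setminus\{\talpha_{i,i+1}\}$, $s,t>0$. With that observation, closure and coclosure of $X'$ follow by applying $s_i$ and splitting into cases according to whether the relevant root is $\talpha_{i,i+1}$; the coclosure check also uses closure of $X$ itself (to rule out $s_i\delta\in X$ when $\delta$ is produced from $\alpha_i$ and an element of $s_i(\Phi^+\setminus X)$). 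Similarly, in the forward direction for $n=2$, the closure relations are more than the chain relations: e.g.\ if $\alpha+j\delta$ and $-\alpha+l\delta$ both lie in a closed set then so does every $\alpha+k\delta$ with $k\ge j$ and every $-\alpha+m\delta$ with $m\ge l$, and if $\alpha+j\delta,\alpha+l\delta$ are both present then so are all intermediate $\alpha+m\delta$. One should explicitly observe that inversion sets of $\widetilde{A}_1$ elements are initial segments of one of the two chains $\{\alpha,\alpha+\delta,\ldots\}$ or $\{\delta-\alpha,2\delta-\alpha,\ldots\}$, at which point these closure conditions (and their coclosure counterparts) are verified by inspection. Your lemma proof is correct as written: for $n\ge3$ the existence of $c\in(a,b)$ with $c\not\equiv a,b\pmod n$ holds whenever $b-a\ge 2$ (if $b-a\le n$ any interior $c$ works since neither residue $\bar a$ nor $\bar b$ recurs; if $b-a>n$ all residues occur and $n\ge3$ leaves one free), and your $n=2$ identities do place every positive root in $\RR_{\ge0}\alpha+\RR_{\ge0}(\delta-\alpha)$. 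Finiteness of $X=N(\tilde\pi)$ is correctly reduced to translation-invariance: for fixed $a\in\{1,\ldots,n\}$, $\tilde\pi^{-1}(b)\to\infty$ as $b\to\infty$, so only finitely many $b>a$ satisfy $\tilde\pi^{-1}(b)<\tilde\pi^{-1}(a)$.

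An alternative route, more in the spirit of the paper, would be to pass through the surjection $\WO(\TTot_n)\to\Dyer(\tS_n)$ of Theorem~\ref{thm:TITOtoDyer} and the characterization of compact TITOs in Theorem~\ref{thm:TITOcompact}; but that correspondence is not a bijection on the relevant sets (compact TITOs with only imaginary inversions map to $\varnothing$), so some care would be needed, and your direct argument is cleaner for this particular statement.
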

\end{example}

\section{The infinite symmetric group $S_\infty$}\label{sec:Sinfty}
In this section, we extend the theory of arc diagrams and canonical join representations to the Coxeter group $S_\infty$. We will use the results of this section to study translation-invariant total orders later, but studying $S_\infty$ itself is also a good introduction to some poset-theoretic difficulties that arise in the study of extended weak order. 
\begin{definition}
    The \newword{infinite symmetric group} $S_\infty$ is the group of bijections $\pi : \ZZ \to \ZZ$ such that $\{i\in \ZZ \mid \pi(i)\neq i\}$ is a finite set.
\end{definition}
We will sometimes depict elements of $S_\infty$ via a finite one-line notation such as $23145$. This is to be interpreted as the permutation which fixes all integers outside the set $\{1,\ldots,5\}$, and acts on $\{1,\ldots,5\}$ as the permutation $23145$.
Just as with the finite symmetric group $S_n$, an \newword{inversion} of a permutation $\pi\in S_\infty$ is a pair $(a,b)$ with $a<b$ so that $\pi^{-1}(b) < \pi^{-1}(a)$. The \newword{weak order} on $S_\infty$ puts $\pi_1 \leq \pi_2$ if there is a containment of inversion sets $N(\pi_1)\subseteq N(\pi_2)$. Note that $N(\pi)$ is always a finite set.

\begin{definition}
    A \newword{total order} on $\ZZ$ is a relation $(\prec)$ which is:
    \begin{itemize}
        \item Transitive: for all $a,b,c\in\ZZ$, if $a\prec b$ and $b\prec c$, then $a\prec c$, and
        \item Trichotomous: for all $a,b\in \ZZ$, exactly one of $a\prec b$, $a=b$, or $b \prec a$ occur.
    \end{itemize}
    We write $\Tot$ for the set of total orders on $\ZZ$. If $(\prec)$ is such a total order, then an \newword{inversion} of $\prec$ is a pair $(a,b)$ with $a<b$ so that $b\prec a$. The set of inversions of $(\prec)$ is denoted $N(\prec)$. The \newword{weak order} on $\Tot$ is the partial order on $\Tot$ which puts $(\prec_1) \leq (\prec_2)$ if $N(\prec_1)\subseteq N(\prec_2)$.
\end{definition}

In this definition (and everywhere in this paper), if $a,b\in \ZZ$ then we write $a<b$ for the usual ordering of the integers, and reserve the symbol $\prec$ for possibly-exotic total orders. We will write $[a,b]_{\prec}$ for the closed interval $\{x\in\ZZ\mid a\preceq x \preceq b\}$. 
Given a permutation $\pi$, we can construct a total order $(\prec_\pi)$ so that $a\prec_\pi b$ if and only if $\pi^{-1}(a) < \pi^{-1}(b)$. Equivalently, $a\prec_\pi b$ if and only if $a$ appears to the left of $b$ in the one-line notation of $\pi$. Then by comparing definitions, we have that $N(\prec_\pi) = N(\pi)$. Hence the map $\pi \mapsto (\prec_\pi)$ is an order-preserving embedding $\WO(S_\infty) \to \WO(\Tot)$, which realizes $\WO(S_\infty)$ as the order ideal of $\WO(\Tot)$ consisting of total orders with finite inversion sets. 

\begin{aside}
\begin{remark}
    It was shown in \cite{Barkley2022} that $\WO(\Tot)$ is isomorphic to $\Dyer(S_\infty)$, the extended weak order of $S_\infty$. More precisely, a subset $X$ of $\Phi^+_{S_\infty}$ is biclosed if and only if $X= N(\prec)$ for a total order $\prec$.
\end{remark}
\end{aside}

The following is an analog of \Cref{prop:latticeSn}, shown in \cite[Theorem 6.4]{Barkley2022}. Let $T \coloneqq \{(a,b)\mid a< b\}$. The \newword{closure} of a subset $N\subseteq T$ is the unique minimal set $\overline{N}\supseteq N$ so that whenever $(a,b)$ and $(b,c)$ are in $\overline{N}$, then so is $(a,c)$.

\begin{proposition}\label{prop:latticeSinfty}
    The poset $\WO(\Tot)$ is a complete lattice. The join of a family $(\prec_i)_{i\in I}$ is the unique total order $(\prec)$ with 
    \[ N(\prec) = \overline{\bigcup_{i\in I} N(\prec_i)}.  \]
\end{proposition}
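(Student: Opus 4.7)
The plan is to identify inversion sets of total orders on $\ZZ$ with biclosed subsets of $T = \{(a,b) : a<b\}$, and then to show that the candidate $N^\star \coloneqq \overline{\bigcup_{i\in I} N(\prec_i)}$ is biclosed and satisfies the universal property of a join. Here ``biclosed'' means both \emph{closed} (if $a<b<c$ and $(a,b),(b,c)\in N$, then $(a,c)\in N$) and \emph{coclosed} (if $a<b<c$ and $(a,c)\in N$, then $(a,b)\in N$ or $(b,c)\in N$); the characterization that inversion sets are exactly the biclosed subsets of $T$ is the content of the aside above, and from a biclosed $N$ one recovers $\prec$ by declaring $a \prec b$ (for $a<b$) to mean $(a,b)\notin N$.

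The argument has four steps. Step (1): $N^\star$ is closed directly from the definition of $\overline{\cdot}$. Step (2), the heart of the proof: $N^\star$ is coclosed. Step (3): having shown $N^\star$ is biclosed, let $\prec$ be the total order with $N(\prec) = N^\star$; then $\prec \geq \prec_i$ for each $i$ because $N^\star \supseteq N(\prec_i)$, and any upper bound $\prec'$ has a closed inversion set containing $\bigcup_i N(\prec_i)$, hence containing $N^\star$. Step (4): arbitrary meets then exist formally from existence of all joins together with the bottom element (the usual order $<$, with empty inversion set), via $\bigwedge X = \bigvee \{y : y \leq x \text{ for all } x \in X\}$.

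For step (2), suppose $(a,c) \in N^\star$ and $a < b < c$. Unfolding the definition of $\overline{\cdot}$, there is a finite chain $a = y_0 < y_1 < \cdots < y_k = c$ with each edge $(y_{j-1}, y_j)$ belonging to some $N(\prec_{i_j})$. If $b = y_j$ for some $j$, the two subchains witness $(a,b),(b,c) \in N^\star$. Otherwise $y_{j-1} < b < y_j$ for a unique index $j$; since $N(\prec_{i_j})$ is itself coclosed, at least one of $(y_{j-1}, b)$ and $(b, y_j)$ lies in $N(\prec_{i_j}) \subseteq N^\star$, and splicing the appropriate half onto the rest of the original chain produces a strictly increasing chain witnessing whichever of $(a,b)$ or $(b,c)$ is required.

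The main obstacle is step (2): to make the splicing work, one needs both the monotonicity $y_{j-1} < b < y_j$ (so the new chain is still strictly increasing and qualifies as a transitive-closure witness) and the coclosedness of each individual $N(\prec_i)$ (so the edge $(y_{j-1}, y_j)$ can be split at $b$). In other words, the genuine content of the proposition is that the transitive closure of a union of coclosed sets remains coclosed; everything else is bookkeeping.
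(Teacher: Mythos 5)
Your argument is correct. The paper itself offers no proof of this proposition; it simply cites \cite[Theorem 6.4]{Barkley2022}, so there is no in-paper proof to compare against. Evaluating your proof on its own terms: the identification of inversion sets with biclosed subsets of $T$ is sound (and you correctly note it is invoked, not reproved — one does need both closedness \emph{and} coclosedness to get transitivity of the reconstructed relation in all six orderings of $\{a,b,c\}$ relative to $<$). The ``chain'' unfolding of $\overline{\cdot}$ is the correct characterization of the transitive-closure operator on $T$, and your splicing argument in step (2) — locate the unique edge $(y_{j-1},y_j)$ of the witnessing chain that $b$ falls strictly inside, split it using coclosedness of the single $N(\prec_{i_j})$, and graft the surviving half onto the appropriate subchain — does establish that $N^\star$ is coclosed. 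Steps (3) and (4) are the standard universal-property and all-joins-imply-all-meets arguments and are fine. One small presentational point: in step (4) you do not separately need to posit the bottom element; it already arises as the empty join $\overline{\varnothing}=\varnothing=N(<)$.
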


In contrast, the poset $\WO(S_\infty)$ is \emph{not} a complete lattice\footnote{It is true that $\WO(S_\infty)$ is a non-complete lattice. For a finite-rank Coxeter group $W$, weak order $\WO(W)$ is a lattice if and only if it is a complete lattice if and only if $W$ is finite, but $S_\infty$ has infinite rank.}.

\begin{example}\label{ex:joinSinfty}
Consider the sequence of permutations in $S_\infty$
\begin{align*}
    \pi_1 &= 1\mathbf023456 &  N(\pi_1)&= \{(0,1)\}  \\
    \pi_2 &= 12\mathbf03456 & N(\pi_2)&= \{(0,1),(0,2)\}\\
    \pi_3 &= 123\mathbf0456 &N(\pi_3)&=\{(0,1),(0,2),(0,3)\}\\
    \pi_4 &= 1234\mathbf056 &N(\pi_4)&=\{(0,1),(0,2),(0,3),(0,4)\}\\
    &\phantom{.}\vdots &\vdots
    \end{align*}
Each permutation in the sequence moves the integer $0$ one step further to the right. 
Because $N(\pi_i)\subsetneq N(\pi_{i+1})$, this sequence of permutations is strictly increasing in $\WO(S_\infty)$. There is no join for this sequence in the weak order on $S_\infty$. Such a join would have to have every pair $(0,b)$ as an inversion, but any element of $S_\infty$ has a finite inversion set. Intuitively, the join should put $0$ to the right of every other integer, and no permutation can do that.
However, since $\WO(\Tot)$ is a complete lattice, it has a least upper bound for the sequence of total orders $(\prec_{\pi_1}),(\prec_{\pi_2}),(\prec_{\pi_3}),\ldots$. 

This least upper bound should be the minimal total order $(\prec)$ so that $N(\prec)$ contains 
\[\bigcup_{i\in \NN} N(\prec_{\pi_i}) = \{(0,1),(0,2),(0,3),\ldots\}.\] 
In fact there is a total order $(\prec)$ so that $N(\prec)$ is exactly $\{(0,1),(0,2),(0,3),\ldots\}$, which must therefore be the join. We conclude that $\bigvee_{i\in\NN} (\prec_{\pi_i})$ is the total order
\[ \cdots\prec -3\prec -2\prec -1\prec 1\prec 2\prec 3\prec\cdots\prec 0. \]
\end{example}

\subsection{Walls of total orders}

\begin{definition}
Let $(\prec)$ be a total order of $\ZZ$. A pair $(a,b)$ with $a<b$ is a \newword{lower wall} of $(\prec)$ if $b\precdot a$ is a cover relation (i.e., there is no integer $c$ with $b\prec c \prec a$). An arc $(a,b,L,R)$ is a \newword{lower arc} of the total order $(\prec)$ if
\begin{itemize}
\item $(a,b)$ is a lower wall of $\pi$, and
\item $\ell \prec a$ for all $\ell \in L$, and
\item $b \prec r$ for all $r\in R$.
\end{itemize}
\end{definition}

The group $S_\infty$ acts on the set $\Tot$. More generally, there is an action of the group $\widehat{S}_\infty$ of all bijections $\ZZ\to \ZZ$. If $\pi:\ZZ\to \ZZ$ is a bijection, then $\pi\cdot (\prec)$ is the total order $\prec'$ so that $a\prec b$ if and only if $\pi(a) \prec' \pi(b)$.
There is a straightforward analog of \Cref{prop:wallscoverSn} describing the cover relations in $\WO(\Tot)$, though proving it is more subtle.
\begin{theorem}\label{prop:wallscoverSinfty}
    The following are equivalent, for a total order $(\prec)$ and a pair $(a,b)$ with $a<b$:
    \begin{itemize}
        \item[(a)] The pair $(a,b)$ is a lower wall of $(\prec)$;
        \item[(b)] The product $(a,b)\cdot \pi$ is covered by $(\prec)$ in $\WO(\Tot)$;
        \item[(c)] The pair $(a,b)$ is in $N(\pi)$ and $N(\pi)\setminus\{(a,b)\}$ is the inversion set of some total order.
    \end{itemize}
    Conversely, if $(\prec')\lessdot (\prec)$ is a cover relation, then there exists a unique pair $(a,b)$ so that $N(\pi)\setminus \{(a,b)\} = N(\pi')$.
\end{theorem}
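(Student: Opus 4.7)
The plan is to first establish the equivalence of (a), (b), (c) and then deduce the converse uniqueness from (a)$\Rightarrow$(b). The only genuinely infinitary content will appear in (b)$\Rightarrow$(a) and in the existence part of the converse.

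For (a)$\Rightarrow$(c), I set $(\prec') \coloneqq (a,b)\cdot(\prec)$ and verify $N(\prec')=N(\prec)\setminus\{(a,b)\}$ by splitting pairs $(x,y)$ with $x<y$ into the cases $\{x,y\}\cap\{a,b\}=\varnothing$, $x=a$, $y=a$, $x=b$, $y=b$, and $(x,y)=(a,b)$. The lower-wall hypothesis (``no $c$ with $b\prec c\prec a$'') is exactly what forces $y\prec a \Leftrightarrow y\prec b$ and $x\succ a\Leftrightarrow x\succ b$ for any $y,x\neq a,b$ whose inversion status must be compared after the swap. For (c)$\Rightarrow$(a) I argue the contrapositive: if some $c$ satisfies $b\prec c\prec a$, then in any total order $(\prec'')$ with $N(\prec'')=N(\prec)\setminus\{(a,b)\}$, the inversions involving $c$ (computed from $N(\prec)$) force either $a\prec'' c$ or $c\prec'' b$ (depending on $<$-position of $c$) together with $a\prec'' b$, producing a three-term cycle that contradicts transitivity. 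Then (a)$\Rightarrow$(b) is immediate from (a)$\Rightarrow$(c), since a one-element gap in inversion sets leaves no room for an intermediate total order.

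The real work is in (b)$\Rightarrow$(a). Suppose $(a,b)\cdot(\prec)\lessdot (\prec)$, and write $(\prec')\coloneqq(a,b)\cdot(\prec)$. From $(\prec')\leq(\prec)$ I first deduce $(a,b)\in N(\prec)$, because otherwise $(a,b)\in N(\prec')\setminus N(\prec)$. Next, the pair-by-pair comparison of $N(\prec)$ and $N(\prec')$ shows that for any $c$ with $b\prec c\prec a$, if $c<a$ then $(c,a)\in N(\prec')\setminus N(\prec)$, and if $c>b$ then $(b,c)\in N(\prec')\setminus N(\prec)$; both would contradict $(\prec')\leq(\prec)$. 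Thus the $\prec$-interval $(b,a)_\prec\coloneqq\{c:b\prec c\prec a\}$ is contained in the finite integer interval $\{a+1,\ldots,b-1\}$. If $(b,a)_\prec$ is nonempty, let $c_0$ be its $\prec$-maximum; then $c_0$ is the $\prec$-immediate predecessor of $a$, so $(a,c_0)$ is a lower wall of $(\prec)$ with $a<c_0<b$. By (a)$\Rightarrow$(c), $(a,c_0)\cdot(\prec)$ has inversion set $N(\prec)\setminus\{(a,c_0)\}$, which contains $N(\prec')=N(\prec)\triangle\{\text{stuff involving the swap of }a,b\}$ — and a direct check gives $(a,b)\cdot(\prec)<(a,c_0)\cdot(\prec)<(\prec)$, contradicting the cover. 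Hence $(b,a)_\prec=\varnothing$, which is (a).

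For the converse, uniqueness is trivial: the pair $(a,b)$ is recovered as the unique element of $N(\prec)\setminus N(\prec')$ once it has cardinality one. For existence, given a cover $(\prec')\lessdot(\prec)$ and $\Delta\coloneqq N(\prec)\setminus N(\prec')$, I will show $\Delta$ contains a lower wall of $(\prec)$; once we have such $(a,b)$, the inclusion $N(\prec')\subseteq N(\prec)\setminus\{(a,b)\}=N((a,b)\cdot(\prec))$ gives $(\prec')\leq (a,b)\cdot(\prec)<(\prec)$, and the cover forces $(\prec')=(a,b)\cdot(\prec)$, so $|\Delta|=1$. To produce a lower wall, pick any $(a_0,b_0)\in\Delta$; if it is not a lower wall there is $c\in(b_0,a_0)_\prec$, and the biclosedness of $N(\prec')$ (closure in the case $a_0<c<b_0$, coclosure in the cases $c<a_0$ or $c>b_0$) yields a new pair in $\Delta$ with strictly smaller $\prec$-interval. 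The main obstacle is termination of this iteration, because $\prec$-intervals can be infinite in $\Tot$. I would handle this by choosing $c$ at each step to be the $\prec$-immediate predecessor of the current ``top'' $a_i$ (or successor of $b_i$) and applying the same trichotomy as in (b)$\Rightarrow$(a) to the cover $(\prec')\lessdot(\prec)$: the cover hypothesis, combined with the fact that any lower wall $(a,b)$ with $(a,b)\in\Delta$ already determines $\prec'$ uniquely, lets one rule out the ``infinite $\prec$-interval'' obstruction, since only pairs lying in a finite integer window survive as candidates.
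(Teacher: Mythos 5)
Your treatment of the equivalence of (a), (b), (c) is fine and more detailed than the paper, which dismisses it as straightforward; in particular your argument for (b)$\Rightarrow$(a) — first showing the $\prec$-interval between $b$ and $a$ must lie inside the finite integer interval $\{a+1,\ldots,b-1\}$, then taking its $\prec$-maximum to manufacture a lower wall $(a,c_0)$ whose flip sits strictly between — is correct.

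The gap is in the existence part of the converse. Your plan is to iterate: start with $(a_0,b_0)\in\Delta = N(\prec)\setminus N(\prec')$, and if it is not a lower wall, use closure/coclosure to replace it with a new pair in $\Delta$ whose $\prec$-interval is strictly smaller. That reduction step is valid, but nothing makes it terminate. The $\prec$-interval need not be finite, so ``strictly smaller'' is not a decrease in any well-ordered measure; and in the cases $c<a_i$ or $c>b_i$ the replacement pair $(c,b_i)$ or $(a_i,c)$ has a \emph{wider} integer window than $(a_i,b_i)$, so the pairs are not confined to any finite box either. The final sentence of your proposal — choose $c$ to be the $\prec$-immediate predecessor of $a_i$, and then ``only pairs lying in a finite integer window survive as candidates'' — is a hope, not an argument: the immediate predecessor may not exist (the $\prec$-interval could be dense), and the claim about a finite window is precisely what needs proving. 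Moreover, using the cover hypothesis at this point is circular, since the content of the cover hypothesis (that $|\Delta|=1$) is exactly the conclusion you are after.

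The paper sidesteps termination entirely by a different device. It argues the contrapositive: if $|\Delta|>1$, pick two distinct pairs $(a,b),(a',b')\in\Delta$, set $A=\min\{a,a'\}$, $B=\max\{b,b'\}$, and pass to the truncations $(\prec_{AB}),(\prec'_{AB})$ whose inversion sets are $N(\prec)\cap T_{A,B}$ and $N(\prec')\cap T_{A,B}$. These land in the image of $S_\infty$ and still differ by at least two inversions, so the finite result (Proposition~\ref{prop:wallscoverSn}) yields $(\prec^s_{AB})$ strictly between them; then $(\prec^s)\coloneqq(\prec')\vee(\prec^s_{AB})$ is shown to lie strictly between $(\prec')$ and $(\prec)$ by checking that its inversion set restricted to $T_{A,B}$ equals $N(\prec^s_{AB})$. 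This one-shot reduction to a fixed finite window, followed by a join, is what your iteration is missing. If you want to keep your iterative flavor, you would need to first fix the window $[A,B]$ using two elements of $\Delta$ and carry out all comparisons inside $T_{A,B}$, which is essentially the paper's argument in disguise.
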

\begin{proof}
    The equivalence between (a), (b), and (c) is straightforward, so we focus on the second statement. Let $(\prec') < (\prec)$ be such that $|N(\prec)\setminus N(\prec')|>1$. We wish to show that there exists a total order $(\prec^s)$ so that $(\prec')<(\prec^s)<(\prec)$. Assume $(a,b),(a',b') \in N(\prec)\setminus N(\prec')$ are distinct pairs. Set $A=\min\{a,a'\}$ and $B=\max\{b,b'\}$ and write $[A,B]_<$ for the interval $\{x\in \ZZ \mid A\leq x \leq B\}$. Then we define a total order $(\prec_{AB})$ by putting $x\prec_{AB} y$ if either $x,y\in [A,B]_<$ and $x\prec y$, or $x,y\not\in [A,B]_<$ and $x<y$. Equivalently, $(\prec_{AB}) $ is the unique total order so that $N(\prec)\cap \{(x,y) \mid A\leq x < y \leq B\} = N(\prec_{AB})$. Define $(\prec'_{AB})$ similarly. We have that $|N(\prec_{AB})\setminus N(\prec'_{AB})| > 1$ since it contains both $(a,b)$ and $(a',b')$. Both $(\prec_{AB})$ and $(\prec_{AB}')$ are in the image of $\WO(S_\infty)$ under the map $\pi\mapsto (\prec_\pi)$, so we deduce by \Cref{prop:wallscoverSn} that there is a total order $(\prec^s_{AB})$ so that $(\prec'_{AB})<(\prec^s_{AB}) < (\prec_{AB})$. Define $(\prec^s)$ to be the join $(\prec')\vee (\prec^s_{AB})$. Note that $(\prec^s_{AB}) < (\prec)$, so by the properties of join we have $(\prec')<(\prec^s) \leq (\prec)$. Hence it remains to show that $(\prec^s)$ differs from $(\prec)$. 
    
    We claim that $N(\prec^s)\cap \{(x,y)\mid A\leq x < y \leq B\} = N(\prec^s_{AB})$. Indeed, if $(x,y) \in N(\prec^s)$, then by \Cref{prop:latticeSinfty} there is a sequence $x=b_0<b_1<\cdots < b_k = y$ so that each $(b_j,b_{j+1})$ is in either $N(\prec')$ or $N(\prec^s_{AB})$. If additionally $x,y\in [A,B]_<$, then each $b_j\in [A,B]_<$. The sets $N(\prec')\cap \{(c,d) \mid A\leq c < d \leq B\}$ and $N(\prec^s_{AB})$ are equal, so each $(b_j,b_{j+1})$ is in $N(\prec^s_{AB})$. This means that $x = b_0 \succ^s_{AB} b_1 \succ^s_{AB} \cdots \succ^s_{AB} b_k = y$, and we see that $(x,y)\in N(\prec^s_{AB})$. But $N(\prec^s_{AB}) \subsetneq N(\prec_{AB})$, so we are done. 
\end{proof}

	This theorem is somewhat surprising, since it is possible that a total order $(\prec)$ has no upper or lower walls at all. 

	\begin{example}
		Consider the total order $(\prec)$ studied in \Cref{ex:joinSinfty}, given by
		\[ \cdots \prec -2 \prec -1 \prec 1 \prec 2 \prec 3 \prec \cdots \prec 0. \]
		Then $(\prec)$ has no lower walls.
		As a result, \Cref{prop:wallscoverSinfty} implies that $(\prec)$ does not cover anything in $\WO(\Tot)$. Yet $(\prec)$ is not the minimal element of $\WO(\Tot)$. The fact that the weak order on total orders is not determined by its cover relations is responsible for a lot of its subtlety.
	\end{example}

	\begin{example}
	Take a bijection of $\ZZ$ with $\QQ$ and let $(\prec)$ be the total order on $\ZZ$ induced by the usual order on $\QQ$. Then every interval $\{x \in \ZZ \mid b \prec x \prec a\}$ is infinite, so the pair $(a,b)$ can never be a lower wall. Similarly, $(\prec)$ has no upper walls. 
	\end{example}

	We record the following lemma, which will be useful later on. 

	\begin{lemma}\label{lem:findcoverfinite}
		Let $(\prec') \leq (\prec)$ in $\WO(\Tot)$. If $(a,b)$ is a pair so that $b\prec a$ and $a\prec' b$, and the interval $[b,a]_{\prec}$ is finite, then there is a total order $(\prec^s)$ so that $(\prec')\leq (\prec^s) \lessdot (\prec)$ and $(\prec^s)\lessdot (\prec)$ is a cover relation.
	\end{lemma}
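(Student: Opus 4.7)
The plan is to exhibit a specific lower wall $(c,d)$ of $(\prec)$ belonging to $N(\prec)\setminus N(\prec')$. Once this is in hand, \Cref{prop:wallscoverSinfty} produces a total order $(\prec^s)$ with $N(\prec^s) = N(\prec)\setminus\{(c,d)\}$ and $(\prec^s)\lessdot (\prec)$; the containment $N(\prec')\subseteq N(\prec^s)$ then holds because $(c,d)\notin N(\prec')$, so $(\prec')\leq (\prec^s)$ as required.

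First I would use the hypothesis $N(\prec')\subseteq N(\prec)$ to pin down the integer ordering of $a$ and $b$. If one had $a>b$ as integers, then $(b,a)$ would be an inversion of $(\prec')$ but not of $(\prec)$, contradicting $(\prec')\leq (\prec)$. Hence $a<b$ as integers, and $(a,b)$ itself is already an element of $N(\prec)\setminus N(\prec')$, although it typically will not be a lower wall. To produce an honest lower wall, I would restrict attention to the finite $\prec$-interval $[b,a]_\prec$ and enumerate its elements in $\prec$-order as $b = x_0 \prec x_1 \prec \cdots \prec x_k = a$. Because $[b,a]_\prec$ is $\prec$-convex, each adjacent pair $x_i \precdot x_{i+1}$ in this listing is a genuine cover relation in $(\prec)$.

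The combinatorial heart of the argument: since $a\prec' b$ whereas $x_0 = b$ and $x_k = a$, the chain cannot satisfy $x_i\prec' x_{i+1}$ for every $i$, for transitivity would then force $b\prec' a$. Choose any index $i$ with $x_{i+1}\prec' x_i$. The alternative $x_i < x_{i+1}$ as integers would place $(x_i,x_{i+1})$ in $N(\prec')$, while the cover relation $x_i\prec x_{i+1}$ keeps $(x_i,x_{i+1})$ out of $N(\prec)$, violating $N(\prec')\subseteq N(\prec)$. Hence $x_i > x_{i+1}$ as integers, and setting $c = x_{i+1}$, $d = x_i$ yields a pair with $c<d$, a cover $d\precdot c$ in $(\prec)$, and $(c,d)\in N(\prec)\setminus N(\prec')$, which is precisely the lower wall we sought.

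The essential content is the finite-interval reduction, which turns the problem into a bubble-sort-style observation along the finite chain $x_0,\ldots,x_k$; I do not expect any serious obstacle beyond verifying that $\prec$-convexity of $[b,a]_\prec$ implies adjacencies in the restricted chain are adjacencies in all of $(\prec)$, and that $N(\prec')\subseteq N(\prec)$ forces any $\prec'$-reversed adjacency in the chain to also be integer-reversed.
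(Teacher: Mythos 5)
Your proof is correct, and it reaches the key lower wall by a route noticeably different from the paper's. The paper's argument is global: it picks a pair $(x,y)\in N(\prec)\setminus N(\prec')$ that minimizes $|[y,x]_\prec|$ over \emph{all} such pairs (the given $(a,b)$ guarantees the minimum is finite), then runs a three-case analysis on where an intermediate element $c$ with $y\prec c\prec x$ would sit under $\prec'$ to show minimality forces $(x,y)$ to be a lower wall. Your argument is local: you stay inside the single finite chain $b=x_0\prec x_1\prec\cdots\prec x_k=a$ of $[b,a]_\prec$, note that each adjacency in this chain is a cover in $(\prec)$ by order-convexity of the interval, and use $a\prec' b$ to find a $\prec'$-reversed adjacency $x_{i+1}\prec' x_i$; the containment $N(\prec')\subseteq N(\prec)$ then forces that adjacency to also be integer-reversed, so it is a lower wall lying in $N(\prec)\setminus N(\prec')$. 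Both then invoke \Cref{prop:wallscoverSinfty} to finish identically. What your version buys is that it avoids the minimization and the case split, replacing them with a single ``some step along the chain must flip'' observation; what the paper's version buys is that it does not require first establishing $a<b$ (though you handle that cleanly as a preliminary). Either way the essential content — reduce to the finite interval, find a disagreeing cover relation — is the same.
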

	\begin{proof}
		By  \Cref{prop:wallscoverSinfty}, it is equivalent to show that there is a lower wall $(x,y)$ of $(\prec)$ so that $(a,b)$ is not in $N(\prec')$. To construct this wall, pick a pair $(x,y) \in N(\prec)\setminus N(\prec')$ which minimizes $|[y,x]_{\prec}|$. Because $(a,b)$ is such a pair and $[b,a]_{\prec}$ is finite, $[y,x]_\prec$ also is finite. In particular, if $y \prec c \prec x$ then $|[y,c]_\prec|$ and $|[c,x]_\prec|$ both have smaller size. 

		We claim that $(x,y)$ is a lower wall. Indeed, we have that $y \prec x$ and that $x \prec' y$. Assume to the contrary that there is some $y \prec c \prec x$. Then either $c \prec' x \prec' y$, $x\prec' c \prec' y$, or $x\prec' y \prec' c$. In the first case, $(y,c) \in N(\prec)\setminus N(\prec')$ contradicts minimality of $(x,y)$. In the second and third case, $(x,c)$ contradicts the minimality of $(x,y)$. Hence $(x,y)$ is a lower wall of $(\prec)$, so we may take $(\prec^s)$ to be the total order $(x,y)\cdot (\prec)$. 
	\end{proof}

\subsection{Canonical join representations for total orders}\label{subsec:canjoinSinfty}
An \newword{arc} for $S_\infty$ is a tuple $(a,b,L,R)$ with $a<b$ and $L\sqcup R = \{a+1,\ldots,b-1\}$. Just like for $S_n$, arcs may be depicted using arc diagrams.
\begin{definition}
Let $(\prec)$ be a total order. A \newword{lower arc} of $(\prec)$ is an arc $(a,b,L,R)$ so that 
\begin{itemize}
	\item $(a,b)$ is a lower wall of $(\prec)$, and
	\item $\ell \prec b$ for all $\ell \in L$, and
	\item $a \prec r$ for all $r\in R$.
\end{itemize}
\end{definition}

We write $\LowerArcs(\prec)$ for the set of lower arcs of $(\prec)$. Unlike for $S_n$, knowledge of $\LowerArcs(\prec)$ does not determine $(\prec)$ uniquely. For example, the total order $(<)$ and the total order from \Cref{ex:joinSinfty} both have no lower arcs.

\begin{example}
Consider the total order $(\prec)$ which is the reverse of the standard ordering of the integers $(<)$:
\[ \cdots \prec 7 \prec 6 \prec 5 \prec 4 \prec 3 \prec 2 \prec 1 \prec 0 \prec \cdots. \]
Then 
\[ \LowerArcs(\prec) = \{ (a,b,\varnothing,\varnothing) \mid a<b \}. \]
The arc diagram for $(\prec)$ is 
\begin{center}
    \begin{tikzpicture}
        \node (A3m) at (-3,0) {$\cdots$};
        \node (A2m) at (-2,0) {$-2$};
        \node (A1m) at (-1,0) {$-1$};
        \node (A0) at (0,0) {$0$};
        \node (A1) at (1,0) {$1$};
        \node (A2) at (2,0) {$2$};
        \node (A3) at (3,0) {$\cdots$};
        \draw (A3m) -- (A2m) -- (A1m) -- (A0) -- (A1) -- (A2) -- (A3);
    \end{tikzpicture}.
\end{center} 
Now let $(\prec') = (1,2)\cdot (\prec)$, which is the order:
\[ \cdots \prec' 7 \prec' 6 \prec' 5 \prec' 4 \prec' 3 \prec' 1 \prec' 2 \prec' 0 \prec' \cdots \]
Then 
\[ \LowerArcs(\prec') = \{ (a,b,\varnothing, \varnothing) \mid a < b,~ (a,b) \neq (0,1)\text{ or }(1,2) \} \cup \{ (0,2,\{1\},\varnothing) \}. \]
The arc diagram for $(\prec')$ is 
\begin{center}
    \begin{tikzpicture}
        \node (A3m) at (-3,0) {$\cdots$};
        \node (A2m) at (-2,0) {$-2$};
        \node (A1m) at (-1,0) {$-1$};
        \node (A0) at (0,0) {$0$};
        \node (A1) at (1,0) {$1$};
        \node (A2) at (2,0) {$2$};
        \node (A3) at (3,0) {$\cdots$};
        \draw (A3m) -- (A2m) -- (A1m) -- (A0)  (A2) -- (A3);
        \draw (A0) to[out=0,in=180] (1,.5) to[out=0,in=180] (A2);
    \end{tikzpicture}.
\end{center}
\end{example}

As with $S_n$, two arcs \newword{cross} if they share an initial value, share a terminal value, or intersect in their interiors in an arc diagram. A \newword{non-crossing collection} is a collection of arcs, no two of which cross. Given an arc $\aa=(a,b,L,R)$, we define $j_\aa$ to be the total order $(\prec)$ defined by
\[ \cdots \prec a-2 \prec a-1 \prec \ell_1 \prec \ell_2 \prec \cdots \prec \ell_{j} \prec b \prec a \prec r_1 \prec \cdots \prec r_{k-1} \prec r_k \prec b+1 \prec b+2 \prec b+3 \prec \cdots. \]
Note that the unique lower arc of $j_\aa$ is $\aa$.

\begin{lemma}\label{lem:arcconeSinfty}
    Let $\aa$ be an arc. If $(\prec)$ is a total order with $\aa\in \LowerArcs(\prec)$, and $(\prec')$ is a total order with $(a,b)\in N(\prec')$ and $(\prec')\leq (\prec)$, then $j_\aa \leq (\prec')$.  
\end{lemma}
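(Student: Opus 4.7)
The plan is to expand the inversion set $N(j_\aa)$ explicitly and then check, family by family, that each inversion also lies in $N(\prec')$. The three tools at my disposal are trichotomy of the total order $\prec'$ (so assuming the negation of a desired inversion gives the opposite strict relation), the containment $N(\prec') \subseteq N(\prec)$ coming from $\prec' \leq \prec$, and the two lower-arc conditions $\ell \prec b$ for $\ell \in L$ and $a \prec r$ for $r \in R$.

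Writing $L = \{\ell_1 < \cdots < \ell_p\}$ and $R = \{r_1 < \cdots < r_q\}$, one reads off from the explicit total order defining $j_\aa$ that
\[ N(j_\aa) = \{(a,b)\} \cup \{(a,\ell) : \ell \in L\} \cup \{(r,b) : r \in R\} \cup \{(r,\ell) : r \in R,~\ell \in L,~r < \ell\}. \]
The first pair $(a,b)$ is in $N(\prec')$ by hypothesis. For $(a,\ell)$ with $\ell \in L$, I would argue by contradiction: if $a \prec' \ell$, then combining with the hypothesis $b \prec' a$ gives $b \prec' \ell$, so $(\ell,b) \in N(\prec') \subseteq N(\prec)$, contradicting the lower-arc condition $\ell \prec b$. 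The family $(r,b)$ for $r \in R$ is dual: if $r \prec' b$, then $r \prec' a$, so $(a,r) \in N(\prec') \subseteq N(\prec)$, contradicting $a \prec r$.

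For the last family, with $r \in R$, $\ell \in L$, and $r < \ell$, the previous two cases already give $\ell \prec' a$ and $b \prec' r$. To chain these through $a$, I note that the lower-arc condition $a \prec r$ means $(a,r) \notin N(\prec)$, hence $(a,r) \notin N(\prec')$, so $a \prec' r$. Combining: $\ell \prec' a \prec' r$, so $(r,\ell) \in N(\prec')$. I don't expect any serious obstacle — the whole argument is a routine unpacking via trichotomy — and the only mildly delicate point is that this fourth family has to bootstrap off the previous two together with the auxiliary observation that $a \prec' r$.
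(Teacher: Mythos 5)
Your proof is correct and follows essentially the same line as the paper's: both arguments reduce to checking that the inversions of $j_\aa$ lie in $N(\prec')$ by using the lower-arc conditions $\ell \prec b$, $a \prec r$, the containment $N(\prec') \subseteq N(\prec)$, the hypothesis $b \prec' a$, and transitivity of $\prec'$. The paper is slightly more economical — it directly deduces $b \prec' a$, $\ell \prec' b$, and $a \prec' r$ and lets transitive closure produce the whole of $N(j_\aa)$ in one stroke, whereas you enumerate the four inversion families and verify them one by one (with a couple of short contradiction arguments in place of direct deductions) — but the underlying ideas are identical and your bookkeeping for the fourth family $\{(r,\ell) : r<\ell\}$ is done carefully and correctly.
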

\begin{proof}
    Let $\aa = (a,b,L,R)$. To show that $j_{\aa}\leq (\prec')$, it is enough to show that $b\prec' a$, $\ell\prec' a$ for each $\ell\in L$, and $b\prec' r$ for each $r\in R$. Because $\aa\in \LowerArcs(\prec)$ and $(\prec')\leq (\prec)$, we have $b\prec' a$, $\ell \prec' b$ for each $\ell\in L$, and $a\prec' r$ for each $r\in R$. Hence the claim follows by transitivity of $\prec'$. 
\end{proof}

\begin{lemma}\label{lem:joinofarcsSinfty}
    Let $D$ be any non-crossing collection of arcs. Then 
    \[ \LowerArcs(\bigvee_{\aa\in D} j_\aa) = D. \] 
    Furthermore, if $(\prec)$ is any total order with $D\subseteq \LowerArcs(\prec)$, then
    $\bigvee_{\aa\in D} j_\aa \leq (\prec)$. 
\end{lemma}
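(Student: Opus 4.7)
Set $(\prec^*) := \bigvee_{\aa \in D} j_\aa$. For the second statement, if $\aa = (a,b,L,R) \in \LowerArcs(\prec)$, then each inversion of $j_\aa$---namely $(a,b)$, the pairs $(a,\ell)$ for $\ell \in L$, $(r,b)$ for $r \in R$, and $(r,\ell)$ for $r \in R$, $\ell \in L$ with $r < \ell$---is an inversion of $(\prec)$, by transitivity of $(\prec)$ applied to the chain $\ell \prec b \prec a \prec r$. Thus $j_\aa \leq (\prec)$ for each $\aa \in D$, and so $(\prec^*) \leq (\prec)$.

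For the first statement, I reduce to the finite case. The set of arcs is countable, so enumerate $D = \{\aa_1, \aa_2, \ldots\}$ and let $D_n = \{\aa_1, \ldots, \aa_n\}$. Each $\pi_n := \bigvee_{\aa \in D_n} j_\aa$ lies in $S_\infty$ with support in a finite interval, so embedding into $S_N$ for $N$ large enough and invoking \Cref{prop:noncrossingSn} and \Cref{prop:canonicaljoinSn} yields $\LowerArcs(\pi_n) = D_n$---the join in $\WO(\Tot)$ agrees with the join in $S_N$ because the transitivity rule defining closure only uses triples within the bounded range. Since each $N(\pi_n)$ is already closed under transitivity and the inversion sets ascend with $n$, their union $\bigcup_n N(\pi_n)$ is itself closed, giving $N(\prec^*) = \bigcup_n N(\pi_n)$.

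For the inclusion $D \subseteq \LowerArcs(\prec^*)$, fix $\aa = (a,b,L,R) \in D_n$. The containment $N(j_\aa) \subseteq N(\prec^*)$ already ensures $\ell \prec^* a$ and $b \prec^* r$, so it remains to verify that $(a,b)$ is a lower wall of $(\prec^*)$---the remaining conditions $\ell \prec^* b$ and $a \prec^* r$ follow automatically, as any failure would place the offending element strictly between $b$ and $a$. If some $c$ lay strictly between, a case split on whether $c \in L \cup R$ or $c \notin \{a, \ldots, b\}$ transfers the ``between'' condition into $\pi_K$ for some $K \geq n$ (by choosing $K$ large enough that the relevant $(\prec^*)$-inversions already appear in $N(\pi_K)$), contradicting $\aa \in D_K = \LowerArcs(\pi_K)$.

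For $\LowerArcs(\prec^*) \subseteq D$, take $\aa' = (a',b',L',R') \in \LowerArcs(\prec^*)$ and let $X := \bigcup_{\aa \in D} N(j_\aa)$. If $(a',b') \in \overline{X}\setminus X$, any chain $a' = c_0 < c_1 < \cdots < c_m = b'$ of length $m \geq 2$ with edges in $X$ forces $b' \prec^* c_{m-1} \prec^* \cdots \prec^* c_1 \prec^* a'$, placing $c_1$ strictly between $b'$ and $a'$ and contradicting the lower wall. Hence $(a',b') \in N(j_\aa)$ for some $\aa = (a_\aa, b_\aa, L_\aa, R_\aa) \in D$. A structural analysis of $N(j_\aa)$, combined with the non-crossing hypothesis on $D$ (which rules out ``transitivity escapes'' that would move interfering elements past $a'$ or $b'$ via a crossing arc), forces $(a_\aa, b_\aa) = (a', b')$; the partitions then agree because both are determined by the restriction of $(\prec^*)$ to $\{a'+1, \ldots, b'-1\}$, so $\aa = \aa' \in D$. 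The main obstacle is precisely this structural analysis: verifying that each alternative ``type'' of inversion $(a', b') \in N(j_\aa)$ with $\{a_\aa, b_\aa\} \neq \{a', b'\}$ forces, via transitivity, a crossing with some other arc in $D$.
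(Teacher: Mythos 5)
The overall strategy---reduce to the finite case via \Cref{prop:noncrossingSn,prop:canonicaljoinSn}, exhaust $D$ by finite subcollections $D_n$, and use that the join of an increasing chain has inversion set equal to the union of the individual inversion sets---matches the paper's. Your proof of the second claim (by directly verifying that each inversion of $j_\aa$ is forced by transitivity from $\ell \prec b \prec a \prec r$) is correct and is in substance the paper's \Cref{lem:arcconeSinfty}. The sketch for $D\subseteq\LowerArcs(\prec^*)$ is also sound: the ``case split transfers into $\pi_K$'' step can be completed exactly as in the paper, by locating a stage $K\geq n$ in which the offending inversion involving $c$ already appears in $N(\pi_K)$ and then contradicting $\aa\in D_K=\LowerArcs(\pi_K)$.

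For the reverse inclusion $\LowerArcs(\prec^*)\subseteq D$ you take a genuinely different route (tracing $(a',b')$ back to a single $N(j_\aa)$ rather than, as the paper does, finding a finite stage $\pi_K$ in which $(a',b')$ becomes a lower wall), and the route works---but you explicitly leave the ``structural analysis'' as a gap and, more importantly, misdiagnose what is needed to close it. No crossing argument or ``transitivity escape'' is required. Once the first inclusion $D\subseteq\LowerArcs(\prec^*)$ is established, you know $\aa=(a,b,L,R)\in\LowerArcs(\prec^*)$. The inversions of $j_\aa$ are exactly $(a,b)$, $(a,\ell)$ with $\ell\in L$, $(r,b)$ with $r\in R$, and $(r,\ell)$ with $r\in R$, $\ell\in L$, $r<\ell$. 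If $(a',b')$ fell into one of the latter three families, then using $\aa\in\LowerArcs(\prec^*)$ one gets $b' \prec^* b \prec^* a'$ (when $a'=a$, $b'\in L$), or $b' \prec^* a \prec^* a'$ (when $a'\in R$, $b'=b$), or $b' \prec^* b \prec^* a \prec^* a'$ (when $a'\in R$, $b'\in L$); in every case $a$ or $b$ is strictly between $b'$ and $a'$ in $(\prec^*)$, contradicting that $(a',b')$ is a lower wall of $(\prec^*)$. Hence $(a',b')=(a,b)$, and then $\aa=\aa'$ by uniqueness of the lower arc at a given lower wall. With that paragraph inserted, your proof is complete; as written, however, the step you yourself flag as the ``main obstacle'' is a genuine hole, and the non-crossing mechanism you gesture at is not the thing that fills it.
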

\begin{proof}
    First assume $D$ is a \emph{finite} non-crossing collection of arcs. Then if $A$ is the minimum initial value among the arcs in $D$ and $B$ is the maximum terminal value among the arcs in $D$, then there is a permutation $\pi \in S_\infty$ so that $\bigvee_{\aa\in D} j_\aa = (\prec_\pi)$ and $\pi$ fixes all integers outside the interval $[A,B]_{<}$. Furthermore, $\pi$ is the join of the join-irreducible permutations in $S_{B-A}$ with lower arcs in $D$. Hence by \Cref{prop:noncrossingSn,prop:canonicaljoinSn}, the lower arcs of $\pi$ coincide with $D$, and therefore $\LowerArcs(\prec_\pi) = D$.

    Now, if $D$ is infinite, then enumerate the arcs in $D$ by $\aa_1,\aa_2,\aa_3,\ldots $. Set $(\prec_k) = \bigvee_{i=1}^kj_{\aa_i}$. Set $(\prec) = \bigvee_{k=1}^\infty (\prec_k)$. Since this is the join of an increasing chain, 
    $N(\prec) = \bigcup_{k=1}^\infty N(\prec_k)$. Let $\aa_k=(a,b,L,R)\in D$. Then $(a,b)\in N(\prec)$ since $(a,b)\in N(\prec_k)$. Let $c\in\ZZ$, we will check $c \not\in [b,a]_{\prec}$ so that $(a,b)$ is a lower wall of $(\prec)$. If $c<a$ and $b\prec c$, then there is some $k'\geq k$ so that $(b,c)\in N(\prec_{k'})$. But then $(a,b) \in N(\prec_{k'})$ as well or else we contradict that $(a,b)$ is a lower wall of $(\prec_{k'})$. Hence $a\prec c$, so $c\not\in [b,a]_\prec$. The cases where $a < c < b$ and $b<c$ are similar. We conclude that $(a,b)$ is a lower wall of $(\prec)$. We must also check that the lower arc of $(\prec)$ from $a$ to $b$ coincides with $\aa_k$. Again, this can only fail by picking up an inversion in some $(\prec_{k'})$ which forces the arc from $a$ to $b$ in $(\prec_k)$  to differ from $\aa_k$, contradicting the first paragraph. Hence the lower arc of $(\prec)$ from $a$ to $b$ is $\aa_k$, so we find that $D\subseteq \LowerArcs(\prec)$. Conversely, if $(a,b)$ is some lower wall of $(\prec)$ then there is some $k$ for which $(a,b)\in N(\prec_k)$. The interval $[b,a]_{\prec_k}$ must be finite, since $N(\prec_k)$ is finite. For any $c \in [b,a]_{\prec_k}$, we must have either $c<a$ or $c>b$ (or else $(a,b),(b,c)\in N(\prec_k)\subseteq N(\prec)$, which is impossible). If $c<a$ there must be some $k_c\geq k$ so that $(c,a)\in N(\prec_{k'})$ and if $c>b$ then there must be some $k_c\geq k$ so that $(b,c)\in N(\prec_{k'})$. Letting $K$ be the maximum of $\{k_c \mid c\in [b,a]_{\prec_k}\}$, we find that $(a,b)$ is a lower wall of $(\prec_K)$, which means that it is represented among the arcs $\aa_1,\ldots,\aa_K$. We conclude that $\LowerArcs(\prec)= D$.

    The second claim follows from \Cref{lem:arcconeSinfty}, which implies that $j_\aa\leq (\prec)$ whenever $(\prec)$ has $\aa$ as a lower arc.
\end{proof}

Before we state the analog of \Cref{prop:finitecanonicaljoin} in this setting, we need one more definition.

\begin{definition}\label{def:widelygenerated}
We say an element $x$ of a poset $P$ is \newword{widely generated} if for all $y<x$ there exists a cover relation $x'\lessdot x$ so that $y\leq x'$. 
\end{definition}
\begin{aside}
\begin{remark}
The term \emph{widely generated} arises in the lattice of torsion classes in a finite-length abelian category, where the widely generated torsion classes are exactly those that are generated by wide subcategories \cite{wide}.
\end{remark}
\end{aside}


\begin{theorem}\label{prop:noncrossingSinfty}
If $(\prec)$ is a total order, then $\LowerArcs(\prec)$ is a non-crossing collection. The map $(\prec) \mapsto \LowerArcs(\prec)$ restricts to a bijection from widely generated total orders to non-crossing collections.
The inverse sends a non-crossing collection $D$ to $\bigvee_{\aa\in D} j_\aa$.
\end{theorem}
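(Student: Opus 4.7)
The plan combines three ingredients: non-crossingness of $\LowerArcs(\prec)$, the fact (\Cref{lem:joinofarcsSinfty}) that $\LowerArcs(\bigvee_{\aa \in D} j_\aa) = D$ for every non-crossing $D$, and a new argument that the total orders of the form $\bigvee_{\aa \in D} j_\aa$ are exactly the widely generated ones.

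For non-crossing, I would rule out shared endpoints first: if $a = a'$ then both $b, b' \prec a$, so one of $b, b'$ sits strictly inside the lower-wall interval of the other, contradicting the lower-wall property; the case $b = b'$ is symmetric. For interior crossings, I would run a short case analysis on the relative position of $a, a', b, b'$ in the usual order, each time combining the lower-arc inequalities $\ell \prec b \prec a$ for $\ell \in L$, $b \prec a \prec r$ for $r \in R$ (and the analogous inequalities for $\aa'$) with the two wall relations $b \prec a$ and $b' \prec a'$. In every crossing configuration these inequalities string together into either a cycle such as $a' \prec b \prec a \prec b' \prec a'$ or a third element in some lower-wall interval, both impossible.

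It remains to show (i) every $(\prec) = \bigvee_{\aa \in D} j_\aa$ is widely generated, and (ii) every widely generated total order $(\prec)$ equals $\bigvee_{\aa \in \LowerArcs(\prec)} j_\aa$; combined with \Cref{lem:joinofarcsSinfty} these give the bijection. For (i), given $(\prec') < (\prec)$, I claim some arc $\aa = (a, b, L, R) \in D$ has $(a, b) \notin N(\prec')$; then $(a, b) \cdot (\prec) \lessdot (\prec)$ is a cover sitting above $(\prec')$ by \Cref{prop:wallscoverSinfty}. Suppose instead $(a, b) \in N(\prec')$ for every $\aa \in D$. Since $D = \LowerArcs(\prec)$, for each such arc we have $\ell \prec b$ and $a \prec r$ in $(\prec)$, and since $N(\prec') \subseteq N(\prec)$ these non-inversions persist in $(\prec')$, yielding $\ell \prec' b$ and $a \prec' r$. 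Combined with $b \prec' a$, transitivity gives $\ell \prec' a$ and $b \prec' r$, so $N(j_\aa) = \{(a, \ell) : \ell \in L\} \cup \{(a, b)\} \cup \{(r, b) : r \in R\} \subseteq N(\prec')$, i.e., $j_\aa \leq (\prec')$. Joining over $\aa \in D$ forces $(\prec) \leq (\prec')$, contradicting $(\prec') < (\prec)$. For (ii), set $(\prec^*) = \bigvee_{\aa \in \LowerArcs(\prec)} j_\aa$; by \Cref{lem:joinofarcsSinfty} we have $(\prec^*) \leq (\prec)$. If this inequality were strict, wide generation of $(\prec)$ would supply a cover $(\prec^s) \lessdot (\prec)$ with $(\prec^*) \leq (\prec^s)$. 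By \Cref{prop:wallscoverSinfty} this cover has the form $(a, b) \cdot (\prec)$ for some $\aa \in \LowerArcs(\prec)$, whence $j_\aa \leq (\prec^*) \leq (\prec^s)$; but $(a, b) \in N(j_\aa) \setminus N(\prec^s)$, a contradiction.

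The main obstacle is step (i); the key insight is that $(\prec') \leq (\prec)$ drags the non-inversions $(\ell, b)$ and $(a, r)$ from $(\prec)$ down into $(\prec')$, and this is precisely what propagates the single wall $(a, b) \in N(\prec')$ into the full inversion set $N(j_\aa) \subseteq N(\prec')$.
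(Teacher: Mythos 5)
Your argument has the same overall structure as the paper's proof. For the bijection you use \Cref{lem:joinofarcsSinfty} exactly as the paper does, and your two directions — ``$\bigvee_{\aa\in D}j_\aa$ is widely generated'' and ``widely generated implies $(\prec)=\bigvee_{\aa\in\LowerArcs(\prec)}j_\aa$'' — match the paper's argument, with the content of \Cref{lem:arcconeSinfty} reproved inline rather than cited. The one genuine divergence is the non-crossing claim. The paper restricts $(\prec)$ to the finite interval $[\min\{a,a'\},\max\{b,b'\}]_{<}$, obtains a finite permutation whose lower arcs include both, and invokes the finite non-crossing result (\Cref{prop:noncrossingSn}). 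You instead propose a direct case analysis on the lower-arc inequalities. That route is workable — chasing $b\precdot a$, $b'\precdot a'$, and the $L,R$ membership conditions does always yield either a $\prec$-cycle or a third element strictly between $b$ and $a$ — but as written it is a sketch, not a proof: you neither enumerate the sign patterns for the nested case $a<a'<b'<b$ and the overlapping case $a<a'<b<b'$, nor handle the contributions from $L\cap R'$ and $R\cap L'$ in the crossing condition, which is where most of the work lives. The paper's reduction to $S_n$ is shorter and avoids that bookkeeping.

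One small inaccuracy worth fixing: you write $N(j_\aa)=\{(a,\ell):\ell\in L\}\cup\{(a,b)\}\cup\{(r,b):r\in R\}$, but this omits the inversions $(r,\ell)$ with $r\in R$, $\ell\in L$, and $r<\ell$; these lie in $N(j_\aa)$ since $\ell\prec b\prec a\prec r$ in $j_\aa$. The chain $\ell\prec' b\prec' a\prec' r$ you establish already delivers $\ell\prec' r$, so the conclusion $j_\aa\le(\prec')$ is unaffected, but the stated equality for $N(j_\aa)$ is wrong as written.
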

\begin{proof}
    Let $(\prec)$ be a total order. Consider two lower arcs $(a,b,L,R)$ and $(a',b',L',R')$ of $(\prec)$. The restriction of $\prec$ to a total ordering on the (finite) set $[\min\{a,a'\}, \max\{b,b'\}]_{<}$ comes from a permutation $\pi$ of that set. Further, the lower arcs of $\pi$ include $(a,b,L,R)$ and $(a',b',L',R')$. Hence the arcs do not cross by \Cref{prop:finitecanonicaljoin}. We conclude that $\LowerArcs(\prec)$ is a non-crossing collection. 

    If $D$ is a non-crossing collection, then consider $(\prec) = \bigvee_{\aa\in D} j_\aa$. By \Cref{lem:joinofarcsSinfty}, $\LowerArcs(\prec) = D$. Consider some $(\prec')<(\prec)$. Then there exists $\aa=(a,b,L,R)\in D$ so that $(\prec') \not \geq j_\aa$. By \Cref{lem:arcconeSinfty}, this implies $(a,b) \not \in N(\prec')$. Hence $(\prec') \leq (a,b)\cdot (\prec) \lessdot (\prec)$, so $(\prec)$ is widely generated.

    Finally, let $(\prec)$ is widely generated and $(\prec') = \bigvee_{\aa \in \LowerArcs(\prec)} j_\aa$. By \Cref{lem:joinofarcsSinfty}, $(\prec')\leq (\prec)$. Assume to the contrary that $(\prec')< (\prec)$. Then there is some lower wall $(a,b)$ of $(\prec)$ which is not in $N(\prec')$, since $(\prec)$ is widely generated. But this contradicts \Cref{lem:joinofarcsSinfty}, which says that $(a,b)$ is a lower wall of $(\prec')$. 

    We deduce that $(\prec) \mapsto \LowerArcs(\prec)$ and $D\mapsto \bigvee_{\aa\in D}j_\aa$ are inverse bijections between the widely generated total orders and the noncrossing collections.
\end{proof}

\begin{aside}
\begin{remark}
A consequence of \Cref{prop:noncrossingSinfty} is that $(\prec)$ is widely generated if and only if $(\prec)$ is of the form $\bigvee_{\aa \in D}j_\aa$ for some non-crossing collection $D$. It seems difficult to give a more explicit characterization of the widely generated total orders. For example, the total orders
\[ 1 \prec -1 \prec 3 \prec -3 \prec 5 \prec -5 \prec \cdots \prec 2\cdot 1 \prec 2\cdot -1 \prec 2\cdot 3 \prec \cdots \prec 2^2\cdot 1 \prec 2^2\cdot -1 \prec 2^2 \cdot 3 \prec \cdots \prec 0 \]
and 
\[ \cdots \prec -1 \prec 1 \prec 3 \prec 5 \prec \cdots \prec 6 \prec 4 \prec 2 \prec 0 \prec \cdots \]
are widely generated, while the total order
\[ \cdots \prec -1 \prec 1 \prec 3 \prec 5 \prec \cdots \prec 0 \prec 2 \prec 4 \prec 6 \prec \cdots \]
is not.
\end{remark}
\end{aside}

In a finite lattice, the join-irreducibles are the same as the elements which have a unique lower cover. The following is the generalization of this fact to an infinite complete lattice. Its proof is straightforward.
\begin{lemma}\label{lem:JIwide}
    Let $L$ be a complete lattice. Then $j\in L$ is a complete join-irreducible if and only if $j$ is widely generated and there is a unique element $j_*\in L$ so that $j_*\lessdot j$ is a cover.
\end{lemma}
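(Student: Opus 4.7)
The plan is a straightforward two-direction proof working directly from the definitions; the key observation in both directions is that, when one exists, the unique cover $j_*$ below a widely generated element $j$ must equal $\bigvee\{x \in L : x < j\}$.

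For the forward direction, suppose $j$ is a complete join-irreducible. Define $X := \{x \in L : x < j\}$. Since $j \notin X$ and $j$ is a complete join-irreducible, the join $\bigvee X$ cannot equal $j$; on the other hand every element of $X$ is at most $j$, so $\bigvee X \leq j$. Hence $\bigvee X < j$, and I set $j_* := \bigvee X$. I would then check that $j_* \lessdot j$ is a cover: if $j_* < y < j$ for some $y$, then $y \in X$ by definition, forcing $y \leq \bigvee X = j_*$, a contradiction. Uniqueness of the cover and wide generation both drop out of the same observation: every $y < j$ lies in $X$ and hence satisfies $y \leq j_*$, so $j_*$ is the only element covered by $j$, and for each $y < j$ the element $j_* \lessdot j$ serves as the required cover in \Cref{def:widelygenerated}.

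For the reverse direction, suppose $j$ is widely generated and has a unique cover $j_* \lessdot j$. I would show $j$ is a complete join-irreducible by contradiction: assume $X \subseteq L$ with $j \notin X$ and $\bigvee X = j$. Each $x \in X$ satisfies $x \leq \bigvee X = j$, and since $x \neq j$ we have $x < j$. By wide generation, for every such $x$ there is a cover $x' \lessdot j$ with $x \leq x'$; by uniqueness, $x' = j_*$. Thus $x \leq j_*$ for every $x \in X$, giving $\bigvee X \leq j_* < j$ and contradicting $\bigvee X = j$.

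Both directions are essentially one-line arguments once one recognizes that wide generation plus a unique cover forces $j_* = \bigvee\{x : x < j\}$, which is the same formula that characterizes $j_*$ when $j$ is a complete join-irreducible. I do not expect any real obstacle: the definition of widely generated was tailored precisely to make this equivalence hold, and the proof requires nothing beyond completeness of $L$ and the given definitions. The only mild care needed is in the forward direction to verify that the set $X$ is nonempty or, equivalently, that when $j$ is the minimum of $L$ the statement is handled correctly — in that degenerate case $j$ has no elements below it, hence is vacuously widely generated and has no covers below it, and by convention is not typically considered a complete join-irreducible (since $\bigvee \varnothing$ equals the bottom element), so I would note this boundary case explicitly if $L$ has a minimum equal to $j$.
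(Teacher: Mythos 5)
The paper does not spell out a proof for this lemma, remarking only that it is straightforward, so there is no official argument to compare against. Your proof is correct and takes the only natural route: in the forward direction you identify $j_* = \bigvee\{x \in L : x < j\}$, show it is strictly below $j$ (using complete join-irreducibility), and then read off both the cover property and wide generation from the fact that every strict lower bound of $j$ lies below $j_*$; in the reverse direction you funnel every element of a putative join representation through the unique cover $j_*$. Your attention to the degenerate case where $j$ is the bottom of $L$ is correct and worth noting: the bottom is excluded on both sides of the equivalence (it is $\bigvee\varnothing$, hence not a complete join-irreducible, and it has no cover below it).
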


Hence \Cref{prop:noncrossingSinfty} implies that there is a bijection between arcs and \JIs. The unique \JI with lower arc $\aa=(a,b,L,R)$ is exactly the total order $j_\aa$ described above.

We have shown:
\begin{corollary}
    The map $\aa \mapsto j_\aa$ is a bijection between arcs and $\JIrrc(\WO(\Tot))$.
\end{corollary}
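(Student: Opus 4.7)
The plan is to deduce the bijection by combining \Cref{prop:noncrossingSinfty} with \Cref{lem:JIwide}, handling well-definedness, injectivity, and surjectivity in turn.

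First I would show that each $j_\aa$ lies in $\JIrrc(\WO(\Tot))$. Given an arc $\aa = (a,b,L,R)$, the singleton $\{\aa\}$ is trivially a non-crossing collection, and the explicit description of $j_\aa$ given earlier in the section makes it clear that $(a,b)$ is the unique lower wall of $j_\aa$ and that $\LowerArcs(j_\aa) = \{\aa\}$. Applying \Cref{prop:noncrossingSinfty} to $D = \{\aa\}$ identifies $j_\aa$ as a widely generated total order, and \Cref{prop:wallscoverSinfty} then tells us that $j_\aa$ has a unique lower cover, namely $(a,b)\cdot j_\aa$. The criterion \Cref{lem:JIwide} therefore certifies $j_\aa$ as a complete join-irreducible.

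Injectivity is immediate: if $j_\aa = j_\bb$, then $\{\aa\} = \LowerArcs(j_\aa) = \LowerArcs(j_\bb) = \{\bb\}$, so $\aa = \bb$.

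For surjectivity I would start with an arbitrary $j \in \JIrrc(\WO(\Tot))$. By \Cref{lem:JIwide}, $j$ is widely generated and has a unique lower cover $j_* \lessdot j$. By \Cref{prop:wallscoverSinfty} this means $j$ has a unique lower wall $(a,b)$, which canonically determines a unique lower arc $\aa = (a,b,L,R)$ of $j$ (with $L = \{c : a < c < b,\ c \prec_j b\}$ and $R = \{c : a < c < b,\ a \prec_j c\}$). Thus $\LowerArcs(j) = \{\aa\}$ is a non-crossing collection, and the inverse direction of \Cref{prop:noncrossingSinfty} forces $j = \bigvee_{\bb \in \{\aa\}} j_\bb = j_\aa$.

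Since the substantive content is already encapsulated in \Cref{prop:noncrossingSinfty} and \Cref{lem:JIwide}, there is no real obstacle to overcome here; the only routine step is observing that a lower wall uniquely determines a lower arc from the defining conditions, which is automatic.
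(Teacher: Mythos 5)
Your proof is correct and follows the same route as the paper, which deduces the bijection by combining \Cref{prop:noncrossingSinfty} with \Cref{lem:JIwide}: complete join-irreducibles are precisely the widely generated total orders with a unique lower cover, and under the bijection of \Cref{prop:noncrossingSinfty} these correspond exactly to singleton non-crossing collections $\{\aa\}$, with $j_\aa$ as the associated total order. You have simply spelled out the well-definedness, injectivity, and surjectivity that the paper leaves implicit.
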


Unlike in the case of $S_n$, there are elements of $\WO(\Tot)$ without a canonical join representation. Indeed, we have the following.
\begin{lemma}\label{lem:canonicaljoiniswide}
    If $L$ is a complete lattice and $y\in L$ is not widely generated, then $y$ has no canonical join representation.
\end{lemma}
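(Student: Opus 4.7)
The plan is to prove the contrapositive: suppose $y$ admits a canonical join representation $\bigvee U = y$; I will produce, for every $z < y$, a cover $y' \lessdot y$ with $z \leq y'$.

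First I will establish two preliminary facts about $U$. Substituting alternative expressions into the $U \ll V$ condition shows that $U$ is an antichain and that every $u \in U$ is a complete join-irreducible; in particular each $u_*$ is defined, and any $w < u$ satisfies $w \leq u_*$ (otherwise $w \vee u_* = u$ would contradict complete join-irreducibility of $u$). If $y$ itself lies in $U$, the antichain property forces $U = \{y\}$, so $y$ is complete join-irreducible with cover $y_*$; since every $z < y$ satisfies $z \leq y_*$, this case is immediate. Henceforth I assume $y \notin U$, so every $u \in U$ satisfies $u < y$.

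Given $z < y$, pick $u_0 \in U$ with $u_0 \not\leq z$ (which exists because $\bigvee U = y > z$), and set $y'' := \bigvee(U \setminus \{u_0\}) \vee u_{0,*} \vee z$. Then $z \leq y''$, and a short argument shows $y'' < y$: otherwise $(U \setminus \{u_0\}) \cup \{u_{0,*}, z\}$ would be a join representation of $y$, and canonicity would force $u_0$ below some element of that set, each case contradicting either the antichain property, the cover relation $u_{0,*} \lessdot u_0$, or the choice $u_0 \not\leq z$. It therefore suffices to locate a cover $y' \lessdot y$ with $y'' \leq y'$.

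The main step applies Zorn's lemma to the interval $[y'', y) := \{w \in L : y'' \leq w < y\}$. Given a chain $\{w_\alpha\}$ there with supremum $w^* \in L$, the crucial case is $w^* = y$: then $\{w_\alpha\}$ is itself a join representation of $y$, and canonicity yields an index $\alpha_0$ with $u_0 \leq w_{\alpha_0}$; combining this with $w_{\alpha_0} \geq y'' \geq \bigvee(U \setminus \{u_0\})$ gives $w_{\alpha_0} \geq \bigvee U = y$, contradicting $w_{\alpha_0} < y$. Hence $w^* < y$, and $w^*$ provides an upper bound inside $[y'', y)$. Zorn then produces a maximal $y' \in [y'', y)$, which is automatically covered by $y$ and satisfies $z \leq y'' \leq y'$. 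The main obstacle is precisely this Zorn step: without a cofinality argument one cannot, in a general complete lattice, even guarantee that $y$ has any covers, and the full strength of canonicity (not merely irredundancy) is used to prevent chains in $[y'', y)$ from escaping to $y$.
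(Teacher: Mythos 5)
Your proof is correct, but it takes a genuinely different route from the paper's. Where you locate a cover of $y$ above $z$ by applying Zorn's lemma to the interval $[y'',y)$, and then use canonicity to rule out chains escaping to $y$, the paper avoids Zorn entirely: given $x < y$, it picks $j_i \in U$ with $j_i \not\leq x$ and sets $V := \{y' < y \mid j_i \not\leq y'\}$. This $V$ contains $x$ and every $j_{i'}$ with $i' \neq i$; canonicity forces $\bigvee V < y$ (else $V$ would be a join representation of $y$ that $U$ does not refine); and any $y'$ with $\bigvee V < y' \leq y$ must satisfy $j_i \leq y'$, hence $y' \geq \bigvee U = y$, so $\bigvee V$ is already the desired cover. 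In effect the paper takes the largest possible downward-closed witness set; since $j_i \not\leq \bigvee V$, that set contains its own supremum, which is exactly what dissolves the cofinality issue you correctly identify as the crux of the Zorn step. Your version pays for it with extra scaffolding — the antichain and complete join-irreducibility facts about $U$, the separate case $y \in U$, the auxiliary element $u_{0,*}$ — none of which the paper needs. (Incidentally $u_{0,*}$ is superfluous even in your own argument: $y'' := \bigvee(U\setminus\{u_0\}) \vee z$ already satisfies $y'' < y$, since canonicity prevents $u_0$ from lying below any element of $(U\setminus\{u_0\})\cup\{z\}$.)
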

\begin{proof}
    Let $y\in L$ have canonical join representation $y = \bigvee U$, where $U=\{j_i\}_{i\in I}$.  We will show that $y$ is widely generated. Consider some $x\in L$ with $x<y$. Pick some $i\in I$ so that $j_i \not \leq x$. (Such an $i$ must exist, else $y\leq x$.) Consider the set $V = \{ y' < y \mid j_i \not\leq y'\}$. Then $x\in V$ and $j_{i'} \in V$ for all $i'\neq i$. It must be that $\bigvee V < y$, because otherwise $\bigvee V=y$ is a join representation so that $U \not \ll V$ which is impossible. In fact, $\bigvee V$ is covered by $y$, since for any $y'$ with $\bigvee V < y' \leq y$ we have that $j_i\leq y'$, implying that $\bigvee U = y \leq y'$ so $y=y'$. We deduce that $x \leq \bigvee V \lessdot y$. Hence $y$ is widely generated.
\end{proof}


\begin{theorem}\label{thm:canonicaljoinSinfty}
    Let $(\prec) \in \WO(\Tot)$. Then $(\prec)$ has a canonical join representation if and only if $(\prec)$ is widely generated. In this case, the canonical join representation of $(\prec)$ is 
    \[ (\prec) = \bigvee_{\aa\in \LowerArcs(\prec)} j_\aa . \]
\end{theorem}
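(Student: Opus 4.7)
My plan is to split the biconditional into its two directions. The forward direction (having a canonical join representation implies widely generated) is immediate from \Cref{lem:canonicaljoiniswide}, so I would dispose of it in one line. The substance of the theorem is the reverse direction: assuming $(\prec)$ is widely generated, I need to show that $\bigvee_{\aa \in \LowerArcs(\prec)} j_\aa$ is not merely a join representation but the canonical one.

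That it is a join representation, i.e.\ that the join equals $(\prec)$, is already packaged in \Cref{prop:noncrossingSinfty}. So the work reduces to verifying the two defining properties of canonical join representations: irredundancy, and the refinement condition $U \ll V$ for every other join representation $V$ of $(\prec)$. Irredundancy I would get from \Cref{lem:joinofarcsSinfty}: if $D' \subsetneq \LowerArcs(\prec)$ and one tries to represent $(\prec)$ as $\bigvee_{\aa \in D'} j_\aa$, then that join has lower arc set exactly $D'$, not $\LowerArcs(\prec)$, so it cannot equal $(\prec)$. This step is routine.

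The refinement step is the key one, and I expect it to be the main obstacle since it is where one has to produce, for an arbitrary (possibly infinite, possibly strange) join representation $V$ of $(\prec)$ and an arbitrary lower arc $\aa = (a,b,L,R)$, some $v \in V$ with $j_\aa \leq v$. My plan is to argue by contradiction: suppose no such $v$ exists. Since each $v \in V$ satisfies $v \leq \bigvee V = (\prec)$, the contrapositive of \Cref{lem:arcconeSinfty} forces $(a,b) \notin N(v)$ for every $v \in V$. Here I exploit that $\aa$ is a \emph{lower arc}, not merely an inversion, to apply \Cref{lem:arcconeSinfty}. Now, because $(a,b)$ is a lower wall of $(\prec)$, \Cref{prop:wallscoverSinfty} provides an actual cover relation $(a,b)\cdot(\prec) \lessdot (\prec)$ whose inversion set is $N(\prec) \setminus \{(a,b)\}$. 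Every $v \in V$ has $N(v) \subseteq N(\prec)$ and misses $(a,b)$, hence $v \leq (a,b) \cdot (\prec)$. Taking the join over $V$ gives $(\prec) \leq (a,b)\cdot(\prec)$, contradicting the cover relation.

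The delicate aspect of this argument is that it never really uses the hypothesis that $(\prec)$ is widely generated in the refinement step; the hypothesis is only needed to conclude that $\bigvee_{\aa} j_\aa$ equals $(\prec)$ in the first place (via \Cref{prop:noncrossingSinfty}). The apparent danger is the infinitary nature of $V$ and $\LowerArcs(\prec)$, but the cover-relation trick via \Cref{prop:wallscoverSinfty} sidesteps any need to manipulate the closure of unions of inversion sets directly, and reduces the refinement to a one-pair-at-a-time check. Once these pieces are assembled, the explicit formula $(\prec) = \bigvee_{\aa \in \LowerArcs(\prec)} j_\aa$ drops out automatically from \Cref{prop:noncrossingSinfty}.
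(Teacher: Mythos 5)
Your proposal is correct, and the overall skeleton (forward direction via \Cref{lem:canonicaljoiniswide}; $(\prec)=\bigvee j_\aa$ via \Cref{prop:noncrossingSinfty}; irredundancy via \Cref{lem:joinofarcsSinfty}) matches the paper. The genuine divergence is in the refinement step $U \ll V$. The paper argues directly with the concrete description of joins in $\WO(\Tot)$: if no $v\in V$ has $(a,b)$ as an inversion, then since $N(\prec)=\overline{\bigcup_{v\in V}N(v)}$, the closure must produce $(a,b)$ via a chain $a=b_0<b_1<\cdots<b_{k+1}=b$, and the existence of an intermediate $b_1$ with $b\prec b_1\prec a$ contradicts that $(a,b)$ is a lower wall. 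Your argument avoids unwinding the closure operator entirely: since $(a,b)\notin N(v)$ and $N(v)\subseteq N(\prec)$ for each $v$, you conclude $N(v)\subseteq N(\prec)\setminus\{(a,b)\}=N\bigl((a,b)\cdot(\prec)\bigr)$, so $v\le(a,b)\cdot(\prec)$, and hence $(\prec)=\bigvee V\le(a,b)\cdot(\prec)\lessdot(\prec)$, a contradiction. This is a cleaner, more lattice-theoretic route --- it uses only the cover-relation structure from \Cref{prop:wallscoverSinfty} rather than the combinatorics of the closure, and would adapt more readily to other settings where the same cover description holds. The paper's version is more elementary in that it stays inside the explicit closure description and doesn't lean on \Cref{prop:wallscoverSinfty}. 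Both are correct and roughly the same length. As a minor aside: where the paper cites \Cref{thm:noncrossingarcsTITO} for irredundancy, the intended reference is almost certainly \Cref{lem:joinofarcsSinfty} (the statement you correctly cite), since \Cref{thm:noncrossingarcsTITO} concerns TITOs, not general total orders.
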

\begin{proof}
    By \Cref{lem:canonicaljoiniswide}, a total order with a canonical join representation must be widely generated. So let $(\prec)$ be a widely generated total order. By \Cref{prop:noncrossingSinfty}, we have $(\prec) = \bigvee_{\aa \in \LowerArcs(\prec)}j_\aa$. We wish to show this is a canonical join representation. We see that it is irredundant since for any $D\subsetneq \LowerArcs(\prec)$, the join $\bigvee_{\aa\in D} j_\aa$ has $D\neq \LowerArcs(\prec)$ as its set of lower walls (using \Cref{thm:noncrossingarcsTITO}). Now let $V\subseteq L$ be so that $\bigvee V = (\prec)$. For any $\aa=(a,b,L,R)\in \LowerArcs(\prec)$, we wish to show that $j_\aa \leq v$ for some $v\in V$. If any $v\in V$ has $(a,b)$ as an inversion then we are done, since \Cref{lem:arcconeSinfty} says that $j_\aa \leq v$. Otherwise, no $v\in V$ has $(a,b)$ as an inversion. But since $(a,b) \in N(\prec) = \overline{\bigcup_{v\in V} N(v)}$, there must be some sequence $b_1,\ldots, b_k$ with $a=b_0<b_1 < \cdots < b_k < b_{k+1}=b$ and $v_0,\ldots,v_k\in V$ so that $(b_i,b_{i+1})\in N(v_i)$ for each $i$. But this would mean in particular that $(a,b_1)$ and $(b_1,b)$ are both in $N(\prec)$, so $b\prec b_1 \prec a$, contradicting that $(a,b)$ is a lower wall of $(\prec)$. Hence this case doesn't happen and $\{j_\aa \mid \aa\in \LowerArcs(\prec)\} \ll V$. 
\end{proof}

\section{Translation-invariant total orders}\label{sec:TITO}

Throughout this section we fix a natural number $n$. We now introduce the second main object of interest in this paper.
\begin{definition}
    A \newword{translation-invariant total order} (\newword{TITO}) is a total order $(\prec)$ of the integers such that for all $a,b\in \ZZ$, we have $a\prec b$ if and only if $a+n \prec b+n$. We write $\TTot_n$ for the set of TITOs.
\end{definition}

\begin{example}\label{ex:threetitos}
Here are three examples of TITOs with $n=4$ that we will visit again later.

\begin{equation}\cdots\prec-3\prec-4\prec -1\prec -2 \prec 1\prec 0 \prec 3 \prec 2\prec 5 \prec 4 \prec 7 \prec 6 \prec \cdots,
\label{eq:tito1}\end{equation}
    \begin{equation} \cdots -3 \prec 1 \prec 5 \prec 9 \prec \cdots \prec 6 \prec 7 \prec 2 \prec 3 \prec -2 \prec -1 \prec \cdots \prec -4 \prec 0 \prec 4 \prec 8 \prec \cdots, \label{eq:tito2}\end{equation}
    \begin{equation} \cdots\prec -3\prec -1\prec 1\prec 3\prec 5\prec 7\prec \cdots 
    \prec 6\prec 4\prec 2\prec 0\prec -2\prec -4\prec \cdots.  \label{eq:tito3}\end{equation}
\end{example}

We will now examine the structure of a TITO as an ordering of $\ZZ$. 
Let $(\prec)$ be a total order on $\ZZ$. We say that a subset $I\subseteq \ZZ$ is \newword{order-convex} if for any $a\prec b\prec c$ such that $a$ and $c$ are in $I$, then also $b$ is in $I$. For example, order ideals and intervals are order-convex. 

\begin{definition}
    Let $(\prec)$ be a TITO. A \newword{block} of $(\prec)$ is an order-convex subset $I$ with the following properties:
    \begin{itemize}
        \item The ordering of $I$ by $(\prec)$ has no minimal or maximal element, and
        \item For any $a,c\in I$, the interval $\{b\in I\mid a\prec b \prec c\}$ is finite.
    \end{itemize}
    The \newword{size} of a block $I$ is the number of residue classes modulo $n$ appearing in $I$. We say that $I$ is a \newword{waxing} block if $x\prec x+n$ for all $x\in I$. We say that $I$ is a \newword{waning} block if $x+n \prec x$ for all $x\in I$. 
\end{definition}

For example, the TITO (\ref{eq:tito1}) has one block, which is waxing. The TITO (\ref{eq:tito2}) has three blocks which, from left to right, are waxing, waning, and waxing.

\begin{lemma}\label{lem:fundTITO}
    Let $(\prec)$ be a TITO. Then the following hold:
    \begin{itemize}
        \item[(a)] Every integer is in a unique block.
        \item[(b)] If $a\equiv b\bmod n$, then $a$ and $b$ are in the same block.
        \item[(c)] Every block is either waxing or waning.
        \item[(d)] If $x_1\prec \cdots \prec x_k \prec x_{k+1}$ is an interval in a block of size $k$, then the residue classes of $x_1,\ldots, x_k$ are distinct, and $x_{k+1} = x_1 + n$ or $x_1 - n$ depending on whether the block is waxing or waning, respectively.
    \end{itemize}
\end{lemma}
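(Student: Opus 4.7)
The plan is to introduce the equivalence relation $a \sim b$ on $\ZZ$ defined by finiteness of the $\prec$-interval with endpoints $a$ and $b$, and verify that its classes are exactly the blocks. Transitivity of $\sim$ holds because the $\prec$-interval with endpoints $a$ and $c$ is contained in the union of those with endpoints $a,b$ and $b,c$. Each equivalence class is automatically order-convex and, by definition, every $\prec$-interval within a class is finite. To conclude that each class is a genuine block---and in particular to obtain parts (a) and (b)---it suffices to prove the following \textbf{Key Lemma}: for every $a\in\ZZ$, the $\prec$-interval with endpoints $a$ and $a+n$ is finite. Indeed, this implies $a\sim a+n$ and hence $a\sim a+kn$ for all $k\in\ZZ$, giving (b); iterating shows each class is $\prec$-cofinal in both directions, completing (a).

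To prove the Key Lemma, suppose WLOG $a\prec a+n$. I first show that every $b$ strictly between $a$ and $a+n$ in $\prec$ satisfies $b\prec b+n$: if instead $b+n\prec b$, then translation invariance applied to $a\prec b$ gives $a+n\prec b+n$, and combining with $b+n\prec b\prec a+n$ yields $a+n\prec a+n$, impossible. Next, suppose $b$ and $b'=b+kn$ (with $k\geq 1$) both lie strictly between $a$ and $a+n$. The waxing chain $b\prec b+n\prec\cdots\prec b+kn=b'$, together with $a+n\prec b+n$ (translation invariance on $a\prec b$) and $b'\prec a+n$, gives $a+n\prec b+n\preceq b'\prec a+n$, another contradiction. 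Hence each residue class contributes at most one element strictly between $a$ and $a+n$, and the residue class of $a$ itself contributes none, so the total interval has at most $n+1$ elements.

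For (c), suppose $x$ and $y$ lie in the same block with $x\prec x+n$ and $y+n\prec y$; WLOG $x\prec y$. Translation invariance applied $k$ times to $x\prec y$ gives $x+kn\prec y+kn$ for all $k\geq 0$, while the waning chain at $y$ gives $y+kn\preceq y$; combining, $x+kn\prec y$ for every $k\geq 0$. The waxing chain $x\prec x+n\prec x+2n\prec\cdots$ then places infinitely many distinct integers inside $[x,y]_\prec$, contradicting finiteness of that interval.

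Finally, for (d), fix a waxing block $B$ of size $k$ with residues $r_1,\ldots,r_k$, and take $x_1\in B$ of residue $r_1$. Order-convexity of $B$ gives $[x_1,x_1+n]_\prec\subseteq B$, and the Key-Lemma argument shows each residue $r_j\neq r_1$ contributes at most one element strictly between $x_1$ and $x_1+n$. For the matching lower bound, waxing makes the chain $r_j+n\ZZ$ $\prec$-increasing and cofinal in both directions in $B$, so there is a unique $y_j$ of residue $r_j$ with $y_j\preceq x_1\prec y_j+n$; translation invariance then forces $y_j+n\preceq x_1+n$, placing $y_j+n$ strictly between $x_1$ and $x_1+n$. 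Hence $[x_1,x_1+n]_\prec$ has exactly $k+1$ elements, and the consecutive elements $x_1,\ldots,x_{k+1}$ must exhaust it; this forces $x_{k+1}=x_1+n$ and makes $x_2,\ldots,x_k$ representatives of the remaining $k-1$ residues. The waning case is symmetric, using $[x_1-n,x_1]_\prec$ in place. The main technical obstacle is the ``locally waxing'' step in the Key Lemma, which pins down the orientation at each intermediate point; once that is settled, every remaining claim reduces to a routine combination of translation invariance with a pigeonhole count over residue classes.
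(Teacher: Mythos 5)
Your proof is correct, and it follows the same overall skeleton as the paper's: the crux is establishing that the $\prec$-interval between $a$ and $a+n$ is finite, after which (a) and (b) fall out, and (c), (d) reduce to local arguments. The differences are in the sub-arguments. For the Key Lemma, the paper takes two congruent integers $b\prec b'$ strictly between $a$ and $a+n$ and immediately derives $b-n\prec b\prec b'\prec b+n$, contradicting that a residue class is a $\prec$-chain ordered consistently with $\ZZ$; you first establish a ``locally waxing'' claim (every $b$ with $a\prec b\prec a+n$ satisfies $b\prec b+n$) and then run a pigeonhole count. Your version is slightly longer but makes the intermediate structure explicit, which you then reuse directly in (d). For the remaining parts you invert the paper's dependency: the paper proves (d) first via a ``rightmost non-represented residue'' trick and deduces (c) from it, whereas you prove (c) independently by a direct infinite-interval contradiction and prove (d) by an exact count of $\lvert [x_1,x_1+n]_\prec\rvert = k+1$ using a cofinality argument for each residue chain. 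Both routes are valid; the paper's proof of (d) is a bit more slick, while yours gives a cleaner account of exactly which elements populate the interval, which you exploit to conclude both halves of (d) at once.
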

\begin{proof}
    First, we observe that translation-invariance implies that either $\cdots\prec b-n \prec b \prec b+n \prec \cdots$ or $\cdots\prec b+n \prec b \prec b-n \prec \cdots$ for each integer $b$. 
    
    Proof of (b): The claim will follow if we can show that the interval between $a$ and $a+n$ is finite. 
    Suppose not. Then there are integers $b,b'$ between $a$ and $a+n$ which are congruent modulo $n$. Assume $a\prec a+n$, the other case is dual. Then without loss of generality, we may take $a\prec b \prec b' \prec a+n$. By translation-invariance, also $a+n\prec b+n$ and $b-n\prec a$. We conclude that $b-n \prec b \prec b' \prec b+n$, so there are two integers congruent to $b$ between $b-n$ and $b+n$. However, translation-invariance implies that the residue class of $b$ is in either increasing or decreasing order, a contradiction.

    Proof of (a): Each integer is in at most one block because the block containing $a$ must be the collection of integers $I=\{x \mid \text{the interval between $x$ and $a$ is finite}\}$. So existence of the block is equivalent to showing that $I$ has no maximum or minimum, which follows from (b). 

    Proof of (d): Assume $a\prec a+n$ and $a$ is in a block of size $k$ (the other case is dual). It suffices to show that if the interval between $a$ and $a+n$ is $a\prec x_2 \prec \cdots \prec x_\ell \prec a+n$, then the residue classes of $x_2,\ldots, x_\ell$ are distinct and $\ell=k$. Indeed, the proof (b) establishes that the residue classes are distinct. So suppose $\ell<k$. Let $x$ be the rightmost element in the block which is to the left of $a$ and which is not congruent to any of $x_1,\ldots,x_\ell$. Then $x \prec a$, so $x+n\prec a+n$. By assumption, this implies $x+n\prec a$. Also, $x\prec a+n$, so $x-n \prec a$. However, at least one of $x+n$ or $x-n$ must be to the right of $x$, contradicting our hypothesis. This proves (d).

    Proof of (c): Assume $a \prec a+n$ (the other case is dual) and that $b$ is in the same block as $a$. Then, by (d), there is some $b'$ congruent to $b$ so that $a\prec b' \prec a+n$. By translation-invariance, $a+n\prec b'+n$, so $b' \prec b'+n$ and thus the residue class of $b$ is in increasing order. The claim follows.
\end{proof}

According to \Cref{lem:fundTITO}, the blocks partition $\ZZ$ in such a way that congruent integers are in the same part. Furthermore, \Cref{lem:fundTITO}(c+d) implies that each block is encoded by a finite amount of data; namely, whether the block is waxing or waning and a list of $k$ consecutive elements of the block, where $k$ is the size of the block. By iteratively applying \Cref{lem:fundTITO}(d) to this list, we can rebuild the entire block. We encode this data with a \newword{window}: a window for a size $k$ block $I$ is a list of $k$ consecutive entries $x_1,\ldots,x_k$ from the block. If $I$ is waxing, then we depict the window by $[x_1,\ldots,x_k]$. If $I$ is waning, then we depict the window with an underline by $[\underline{x_1,\ldots,x_k}]$. A \newword{window notation} for the TITO $(\prec)$ consists of a window for each of its blocks, ordered left to right in the same order that they appear in $(\prec)$.

\begin{example}
    The following are window notations for the three TITOs of \Cref{ex:threetitos}.
    \begin{equation*}
        [1,0,3,2]
        \tag{1}
    \end{equation*}
    \begin{equation*}
        [1][\underline{2,3}][0]
        \tag{2}
    \end{equation*}
    \begin{equation*}
        [1,3][\underline{2,0}]
        \tag{3}
    \end{equation*}
    We note that the choice of window for a given block is not unique. For instance, consider the TITO with $n=4$
\begin{equation*}
    \cdots \prec -1 \prec 2 \prec 1 \prec 3 \prec  \cdots \prec 8 \prec 4 \prec 0 \prec -4 \prec \cdots.
\end{equation*}
Then valid window notations for this TITO include 
\[ [2,1,3][\underline{0}], \qquad [-1,2,1][\underline{4}], \qquad [3,5,4][\underline{-64}].  \]
The different windows for a block may be constructed by \emph{sliding} the window: if $[x_1,\ldots,x_k]$ is a window for a waxing block, then so are $[x_2,\ldots,x_k,x_1+n]$ and $[x_k-n,x_1,\ldots,x_{k-1}]$. Similarly, if $[\underline{x_1,\ldots,x_k}]$ is a window for a waning block, then so are $[\underline{x_2,\ldots,x_k,x_1-n}]$ and $[\underline{x_k+n,x_1,\ldots,x_{k-1}}]$.
\end{example}

Recall that we also discussed window notations for elements of the affine symmetric group $\tS_n$. Given an affine permutation $\tpi$, we let $(\prec_{\tpi})$ be the TITO with the same window notation as $\tpi$. More precisely, $a\prec_{\tpi} b$ if and only if $\tpi^{-1}(a) < \tpi^{-1}(b)$. For example, TITO (\ref{eq:tito1}) is $(\prec_{\tpi})$ for the affine permutation $\tpi$ sending $1\mapsto 0$, $2\mapsto 3$, $3\mapsto 2$, $4\mapsto 5$, etc.

There is a group action of $\tS_n$ on $\TTot_n$. If $(\prec)$ is a TITO and $\tpi$ is an affine permutation, then $\tpi\cdot (\prec)$ is the TITO $(\prec')$ so that $a \prec b$ if and only if $\tpi(a) \prec' \tpi(b)$. In particular, the TITO $(\prec_{\tpi})$ coincides with $\tpi\cdot (<)$.

\subsection{The weak order on TITOs}
In this section, we describe a partial order on TITOs.
The resulting poset was first studied in \cite{Barkley2022}. To define it, we need the equivalence relation on pairs $(a,b)$ putting $(a,b)\sim (a+n, b+n)$. The equivalence class containing $(a,b)$ is denoted $\rind{a,b}$. A \newword{reflection index} is an equivalence class $\rind{a,b}$ with $a<b$. We will freely extract $a$ and $b$ from $\rind{a,b}$, but only when the result is independent of the choice of representative pair $(a,b)$.  
\begin{definition}\label{def:TITO}
    An \newword{inversion} of a TITO $(\prec)$ is a reflection index $\rind{a,b}$ so that $b\prec a$. Let $\tN(\prec)$ denote the set of inversions of $(\prec)$. The partial order on TITOs putting $(\prec_1)\leq (\prec_2)$ if $\tN(\prec_1)\subseteq \tN(\prec_2)$ is called the \newword{weak order} on TITOs. The resulting poset is denoted $\WO(\TTot_n)$.
\end{definition}
The poset $\WO(\TTot_2)$ is shown in \Cref{fig:S2tilde}.
Note that $N(\prec)$ is a set of pairs, while $\tN(\prec)$ is a set of reflection indices. The word \emph{inversion} could refer to an element of either, depending on the context. 

\begin{example}
    The TITO $(<)$, which is the usual order on the integers, has inversion set $\tN(<) = \varnothing$.
    The TITO $(\prec)$ shown in (\ref{eq:tito1}), with window notation $[1,0,3,2]$, has inversion set
    \[ \tN(\prec) = \{\rind{0,1}, \rind{2,3} \}.\]
    The TITO $(\prec)$ with window notation $[\underline{1}][2]$ (and $n=2$) has inversion set
    \[ \tN(\prec) = \{\rind{1,3},\rind{1,5},\rind{1,7},\ldots \} \cup \{\rind{2,3},\rind{2,5},\rind{2,7},\ldots\}. \]
\end{example}

From \Cref{def:TITO}, $\WO(\TTot_n)$ is evidently a subposet of $\WO(\Tot)$. More is true.

\begin{theorem}\label{thm:latticetSn}
    $\WO(\TTot_n)$ is a complete sublattice of $\WO(\Tot)$.
\end{theorem}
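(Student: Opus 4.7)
The plan is to identify $\TTot_n$ as the fixed-point set of a complete lattice automorphism of $\WO(\Tot)$, from which the complete sublattice property follows for free. The inclusion as a subposet is essentially automatic: for a TITO $(\prec)$, translation invariance means $N(\prec)$ is a disjoint union of shift-orbits $\{(a+kn,b+kn) : k\in\ZZ\}$, each of which corresponds to a single reflection index $\rind{a,b}$, so for TITOs $(\prec_1),(\prec_2)$ the conditions $\tN(\prec_1)\subseteq \tN(\prec_2)$ and $N(\prec_1)\subseteq N(\prec_2)$ coincide. Thus the real task is closure under arbitrary joins and meets computed in $\WO(\Tot)$.

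To set this up, I would consider the shift $\tau:\ZZ\to\ZZ$, $\tau(x)=x+n$. As a bijection of $\ZZ$ it acts on $\Tot$ via $a\,(\tau\cdot\prec)\,b \iff a-n\prec b-n$, and the fixed points are exactly the TITOs. A direct unpacking gives $N(\tau\cdot\prec)=\{(a+n,b+n) : (a,b)\in N(\prec)\}$, which shows that $\tau$ preserves containment of inversion sets. Hence $\tau$ acts as an order automorphism of $\WO(\Tot)$, and any order automorphism of a complete lattice is automatically a complete lattice automorphism.

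The final step invokes the general fact that the fixed points of a complete lattice automorphism form a complete sublattice: if every $(\prec_i)$ satisfies $\tau\cdot(\prec_i)=(\prec_i)$, then $\tau\cdot\bigvee_i(\prec_i)=\bigvee_i \tau\cdot(\prec_i)=\bigvee_i(\prec_i)$, and dually for meets. Applied to the fixed-point set $\TTot_n$, this yields the theorem.

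I do not foresee a serious obstacle. The attraction of the automorphism route is that it treats joins and meets uniformly: Proposition~\ref{prop:latticeSinfty} supplies only the closure-based join formula in $\WO(\Tot)$, so a direct verification of meet closure would either have to establish a dual interior-based formula first or invoke the order-reversing involution $(\prec)\mapsto(\prec^{op})$, neither of which is needed here. If one nonetheless wanted to check the join case by hand, it is immediate: the closure operator $\overline{\,\cdot\,}$ evidently commutes with the shift on $T$, so the inversion set of the join of TITOs is shift-invariant and therefore defines a TITO.
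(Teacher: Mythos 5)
Your proof is correct, but it takes a genuinely different and cleaner route than the paper. The paper proceeds by direct computation: it invokes the explicit join formula from Proposition~\ref{prop:latticeSinfty} (the inversion set of a join is the closure of the union of inversion sets), unpacks what ``$(a,b)\in\overline{\bigcup_i N(\prec_i)}$'' means as a chain condition, and checks that this condition is preserved under shifting by $n$; it then asserts ``the proof that a meet of TITOs is a TITO is similar,'' which tacitly requires the dual interior-based meet formula. Your approach instead observes that the shift $\tau(x)=x+n$ acts on $\WO(\Tot)$ as an order automorphism (verified directly from $N(\tau\cdot\prec)=\{(a+n,b+n):(a,b)\in N(\prec)\}$), that any order automorphism of a complete lattice automatically preserves arbitrary joins and meets, that the fixed-point set of $\tau$ is precisely $\TTot_n$, and that the fixed-point set of any complete lattice automorphism is a complete sublattice. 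This buys you genuine symmetry: joins and meets are handled by the same one-line argument, with no need to articulate a meet formula or appeal to a dual. The paper's computational route buys a bit more in a different direction — it gives the reader an explicit description of how the closure operator interacts with translation invariance, which is reused later (e.g., in the comment after the theorem defining $\overline{\phantom{X}}$ on $\tT$). Both are valid; yours is the more structural argument, and the observation that the closure operator commutes with the shift (your last sentence) is exactly the content of the paper's calculation, so you have also, in effect, sketched the paper's proof as a fallback.
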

\begin{proof}
    Let $(\prec_i)_{i\in I}$ be a family of TITOs. By \Cref{prop:latticeSinfty}, their join in $\WO(\Tot)$ is the total order $(\prec)$ with
    \[ N(\prec) = \overline{\bigcup_{i\in I} (\prec_i)}. \]
    We must check that $(\prec)$ is translation-invariant. Let $a<b$. Then $b\prec a$ if and only if $(a,b)\in N(\prec)$. By definition of closure, this occurs if and only if there is a sequence $b_0,b_1,\ldots,b_k$ with $a=b_0 < b_1 < \cdots < b_k=b$ so that each pair $(b_{j},b_{j+1})$ is an inversion of $(\prec_{i_j})$ for some index $i_j\in I$. Since each $(\prec_{i_j})$ is translation-invariant, this occurs if and only if there is a sequence  $b_0,b_1,\ldots,b_k$ with $a=b_0 < b_1 < \cdots < b_k=b$ so that each pair $(b_{j}+n,b_{j+1}+n)$ is an inversion of $(\prec_{i_j})$ for some index $i_j\in I$. This is equivalent to $b+n\prec a+n$, so we are done.
    The proof that a meet of TITOs is a TITO is similar.
\end{proof}

Let $\tT$ denote the set of reflection indices $\{\rind{a,b} \mid a<b\}$. Then we can define a closure operator on subsets of $\tT$ by sending $X\subseteq \tT$ to the minimal set $\overline{X}$ such that for any $a<b<c$, if $\rind{a,b},\rind{b,c} \in \overline{X}$, then $\rind{a,c}\in \overline{X}$. Then \Cref{thm:latticetSn} shows that the join of a family of TITOs $(\prec_i)_{i\in I}$ is the unique TITO $(\prec)$ so that
\[ \tN(\prec) = \overline{\bigcup_{i\in I} \tN(\prec_i)}. \]

\begin{figure}
\centering 
	\begin{tikzpicture}
		

		\draw[fill] (-90:2.3) circle (1.2pt) -- ({.5*atan2(0,0+1)+.5*atan2(1,1+1)-45}:2.3) circle (1.2pt)
		-- ({.5*atan2(1,1+1)+.5*atan2(2,2+1)-45}:2.3) circle (1.2pt)
		-- ({.5*atan2(2,2+1)+.5*atan2(3,3+1)-45}:2.3) circle (1.2pt)
		-- ({.5*atan2(3,3+1)+.5*atan2(4,4+1)-45}:2.3) circle (.8pt)
		-- ({.5*atan2(4,4+1)+.5*atan2(5,5+1)-45}:2.3) circle (.6pt);

        \draw (-90:2.7) node {$[1,2]$};
        \draw ({.5*atan2(0,0+1)+.5*atan2(1,1+1)-45}:2.8) node[scale=.8] {$[2,1]$};
        \draw ({.5*atan2(1,1+1)+.5*atan2(2,2+1)-48}:2.7) node[scale=.6] {$[-1,4]$};
        \draw ({.5*atan2(2,2+1)+.5*atan2(3,3+1)-45}:2.7) node[scale=.5] {$[4,-1]$};
        
		\foreach \x in {1,2,...,3} {
			\fill ({atan2(4.5,4.5+1)-45+1.4668*\x}:2.3) circle (.6pt);
		};

		\draw[fill] (-90:2.3) circle (1.2pt) -- ({.5*atan2(0+1,0)+.5*atan2(1+1,1)-45}:-2.3) circle (1.2pt)
		-- ({.5*atan2(1+1,1)+.5*atan2(2+1,2)-45}:-2.3) circle (1.2pt)
		-- ({.5*atan2(2+1,2)+.5*atan2(3+1,3)-45}:-2.3) circle (1.2pt)
		-- ({.5*atan2(3+1,3)+.5*atan2(4+1,4)-45}:-2.3) circle (.8pt)
		-- ({.5*atan2(4+1,4)+.5*atan2(5+1,5)-45}:-2.3) circle (.6pt);

        \draw ({.5*atan2(0+1,0)+.5*atan2(1+1,1)-45}:-2.8) node[scale=.8] {$[0,3]$};
        \draw ({.5*atan2(1+1,1)+.5*atan2(2+1,2)-42}:-2.65) node[scale=.6] {$[3,0]$};
        \draw ({.5*atan2(2+1,2)+.5*atan2(3+1,3)-45}:-2.7) node[scale=.5] {$[-2,5]$};

		\foreach \x in {1,2,...,3} {
			\fill ({atan2(4.5+1,4.5)-45-1.4668*\x}:-2.3) circle (.6pt);
		};

            \draw (0:2.6) node[scale=.5] {$[2][1]$};
        \draw (0:-2.6) node[scale=.5] {$[1][2]$};

			\draw[fill] (0:2.3) circle (1.2pt);
			\draw[fill] (0:-2.3) circle (1.2pt);
            
        \draw[fill] (-2.3,0) -- (-1.9,.5) circle (1.2pt) -- (-2.3,1);
        \draw[fill] (-2.3,0) -- (-2.7,.5) circle (1.2pt) -- (-2.3,1);
        \draw[fill] (2.3,0) -- (1.9,.5) circle (1.2pt) -- (2.3,1);
        \draw[fill] (2.3,0) -- (2.7,.5) circle (1.2pt) -- (2.3,1);

        \draw (-1.6,.5) node[scale=.5] {$[1][\underline{2}]$};
        \draw (-3,.5) node[scale=.5] {$[\underline{1}][2]$};
        \draw (3,.5) node[scale=.5] {$[2][\underline{1}]$};
        \draw (1.6,.5) node[scale=.5] {$[\underline{2}][1]$};
        
		\begin{scope}[yscale=-1,shift={(0,-1)}]
			\draw[fill] (-90:2.3) circle (1.2pt) -- ({.5*atan2(0,0+1)+.5*atan2(1,1+1)-45}:2.3) circle (1.2pt)
			-- ({.5*atan2(1,1+1)+.5*atan2(2,2+1)-45}:2.3) circle (1.2pt)
			-- ({.5*atan2(2,2+1)+.5*atan2(3,3+1)-45}:2.3) circle (1.2pt)
			-- ({.5*atan2(3,3+1)+.5*atan2(4,4+1)-45}:2.3) circle (.8pt)
			-- ({.5*atan2(4,4+1)+.5*atan2(5,5+1)-45}:2.3) circle (.6pt);

            \draw (0:2.6) node[scale=.5] {$[\underline{2}] [\underline{1}]$};
        \draw (0:-2.6) node[scale=.5] {$[\underline{1}] [\underline{2}]$};

        \draw (-90:2.7) node {$[\underline{2,1}]$};
        \draw ({.5*atan2(0,0+1)+.5*atan2(1,1+1)-45}:2.8) node[scale=.8] {$[\underline{3,0}]$};
        \draw ({.5*atan2(1,1+1)+.5*atan2(2,2+1)-48}:2.65) node[scale=.6] {$[\underline{0,3}]$};
        \draw ({.5*atan2(2,2+1)+.5*atan2(3,3+1)-45}:2.7) node[scale=.5] {$[\underline{5,-2}]$};

        \draw ({.5*atan2(0+1,0)+.5*atan2(1+1,1)-45}:-2.8) node[scale=.8] {$[\underline{1,2}]$};
        \draw ({.5*atan2(1+1,1)+.5*atan2(2+1,2)-42}:-2.7) node[scale=.6] {$[\underline{4,-1}]$};
        \draw ({.5*atan2(2+1,2)+.5*atan2(3+1,3)-45}:-2.7) node[scale=.5] {$[\underline{-1,4}]$};
   
			\foreach \x in {1,2,...,3} {
				\fill ({atan2(4.5,4.5+1)-45+1.4668*\x}:2.3) circle (.6pt);
			};

			\draw[fill] (-90:2.3) circle (1.2pt) -- ({.5*atan2(0+1,0)+.5*atan2(1+1,1)-45}:-2.3) circle (1.2pt)
			-- ({.5*atan2(1+1,1)+.5*atan2(2+1,2)-45}:-2.3) circle (1.2pt)
			-- ({.5*atan2(2+1,2)+.5*atan2(3+1,3)-45}:-2.3) circle (1.2pt)
			-- ({.5*atan2(3+1,3)+.5*atan2(4+1,4)-45}:-2.3) circle (.8pt)
			-- ({.5*atan2(4+1,4)+.5*atan2(5+1,5)-45}:-2.3) circle (.6pt);

			\foreach \x in {1,2,...,3} {
				\fill ({atan2(4.5+1,4.5)-45-1.4668*\x}:-2.3) circle (.6pt);
			};

			\draw[fill] (0:2.3) circle (1.2pt);
			\draw[fill] (0:-2.3) circle (1.2pt);
		\end{scope}

        \draw[thick] (-4, -3.4) rectangle (4,4.4);
	\end{tikzpicture}
	\caption{The Hasse diagram for $\WO(\TTot_2)$. Each TITO is labeled by its window notation.}
	\label{fig:S2tilde}
\end{figure}
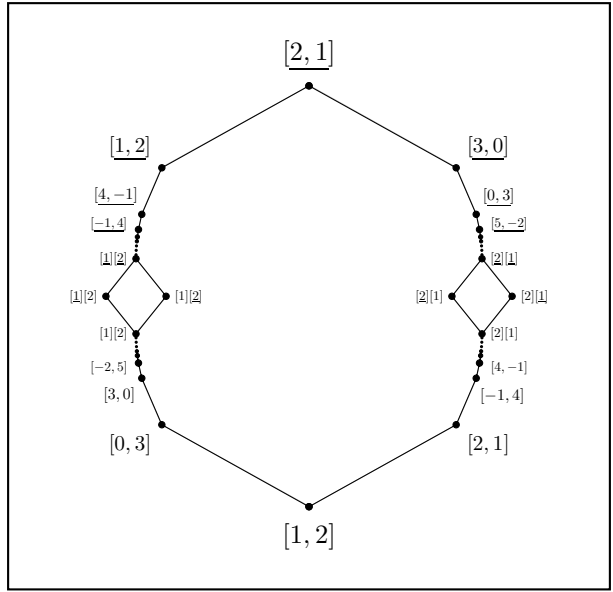

    

\subsection{Walls and cover relations}

Just like in $\WO(S_n)$, the cover relations in $\WO(\TTot_n)$ are governed by adjacent pairs in a TITO. We say that a reflection index $\rind{a,b}$ is \newword{imaginary} if $a\equiv b\bmod n$, otherwise it is \newword{real}. We say a reflection index $\rind{a,b}$ is a \newword{lower wall} of $(\prec)$ if $b\precdot a$ is a cover relation. Note that the imaginary reflection indices $\rind{a,a+kn}$ with $k\geq 2$ are never lower walls. If $\rind{a,b}$ is a real reflection index, then we write $t_{ab}$ to denote the affine permutation which acts on the integers via
\[ t_{ab}: i \mapsto \begin{cases}
    i + b-a &\text{if $i\equiv a\mod n$} \\
    i + a-b &\text{if $i\equiv b\mod n$} \\
    i &\text{otherwise}
\end{cases}. \]
In other words, $t_{ab}$ swaps $a$ and $b$, $a+n$ and $b+n$, $a+2n$ and $b+2n$, etc. 

\begin{theorem}\label{thm:wallscovertSn}
    The following are equivalent, for a TITO $(\prec)$ and a real reflection index $\rind{a,b}$:
    \begin{itemize}
        \item[(a)] The reflection index $\rind{a,b}$ is a lower wall of $(\prec)$;
        \item[(b)] The TITO $t_{ab}\cdot (\prec)$ is covered by $(\prec)$;
        \item[(c)] The reflection index $\rind{a,b}$ is in $\tN(\prec)$ and $\tN(\prec)\setminus \{\rind{a,b}\}$ is the inversion set of some TITO (necessarily covered by $(\prec)$).
    \end{itemize}
    The following are equivalent, for a TITO $(\prec)$ and the imaginary reflection index $\rind{a,a+n}$:
    \begin{itemize}
        \item[(d)] The reflection index $\rind{a,a+n}$ is a lower wall of $(\prec)$;
        \item[(e)] The block of $(\prec)$ containing $a$ is waning of size $1$;
        \item[(f)] The reflection index $\rind{a,a+n}$ is in $\tN(\prec)$ and $\tN(\prec)\setminus \{\rind{a,a+kn} \mid k\geq 1\}$ is the inversion set of some TITO (necessarily covered by $(\prec)$).
    \end{itemize}
    Conversely, if $(\prec_1) \lessdot (\prec_2)$ is a cover relation in $\WO(\TTot_n)$, then either there exists a unique real reflection index $\rind{a,b}$ so that $\tN(\prec_2)\setminus \{\rind{a,b}\} = \tN(\prec_1)$, or else there is a unique imaginary reflection index $\rind{a,a+n}$ so that $\tN(\prec_2) \setminus \{\rind{a,a+kn}\mid k\geq 1\} = \tN(\prec_1)$, but not both.
\end{theorem}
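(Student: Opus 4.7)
The plan is to organize the proof into three parts: the real equivalences $(a)\Leftrightarrow(b)\Leftrightarrow(c)$, the imaginary equivalences $(d)\Leftrightarrow(e)\Leftrightarrow(f)$, and the converse classification of covers. The key observation underlying the real case is that when $a\not\equiv b\pmod n$, the affine reflection $t_{ab}$ swaps every pair $(a+kn, b+kn)$ simultaneously and fixes everything else. For $(a)\Rightarrow(b)$, translation-invariance promotes the single adjacency $b\precdot a$ to adjacencies $b+kn\precdot a+kn$ for all $k\in\ZZ$, so $t_{ab}\cdot(\prec)$ differs from $(\prec)$ only by swapping these adjacent pairs, yielding $\tN(t_{ab}\cdot\prec) = \tN(\prec)\setminus\{\rind{a,b}\}$; no TITO sits strictly between since any intermediate $(\prec')$ would have $\tN(\prec')$ either equal to $\tN(t_{ab}\cdot\prec)$ or to $\tN(\prec)$. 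The implication $(b)\Rightarrow(c)$ follows from the same inversion-set calculation. For $(c)\Rightarrow(a)$, I would assume for contradiction that some integer $c$ satisfies $b\prec c\prec a$ and case-split on whether $c<a$, $a<c<b$, or $b<c$; in each case, the two nearby reflection indices together with the biclosure of $\tN(\prec)\setminus\{\rind{a,b}\}$ produce a violation, since closure forces $\rind{a,b}$ back in, or coclosure forces $\rind{a,c}$ or $\rind{c,b}$ out.

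The imaginary case runs in parallel. $(d)\Leftrightarrow(e)$ reads off directly from \Cref{lem:fundTITO}(c,d): $\rind{a,a+n}\in\tN(\prec)$ iff the block of $a$ is waning, and $a+n\precdot a$ iff no other residue class lies between $a+n$ and $a$, iff the block has size one. For $(e)\Rightarrow(f)$, I would construct $(\prec')$ from $(\prec)$ by reversing the order of the singleton residue class of $a$ while fixing its position among the other blocks; this is well-defined precisely because the block is a singleton, and the inversion set changes exactly by removing the imaginary tower $\{\rind{a,a+kn}\mid k\geq 1\}$. The cover $(\prec')\lessdot(\prec)$ is forced because any intermediate TITO $(\prec'')$ must contain some $\rind{a,a+mn}$, hence the block of $a$ in $(\prec'')$ is waning (by \Cref{lem:fundTITO}(c)), and translation-invariance of $\tN(\prec'')$ then forces the entire tower into $\tN(\prec'')$, giving $(\prec'')=(\prec)$. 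For $(f)\Rightarrow(d)$, assume the block has size at least $2$ and pick $c$ with $a+n\prec c\prec a$ and $c\not\equiv a\pmod n$; the three sub-cases on the standard order of $c$ relative to $a$ and $a+n$ each yield a biclosure contradiction against $\tN(\prec)\setminus\{\rind{a,a+kn}\mid k\geq 1\}$.

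For the converse, let $D = \tN(\prec_2)\setminus\tN(\prec_1)$, which is nonempty. The strategy is to produce a lower wall of $\prec_2$ whose reflection index lies in $D$ and invoke the forward directions: a real such wall yields $\prec_1 = t_{x^*y^*}\cdot\prec_2$ with $D=\{\rind{x^*,y^*}\}$, while an imaginary wall yields $\prec_1$ equal to the block-flip of $\prec_2$ at $x^*$ with $D$ the corresponding imaginary tower; the ``but not both'' clause follows because the two descriptions of $D$ are mutually exclusive. Since blocks of a TITO have size at most $n$, any imaginary element of $D$, or any real element of $D$ with both endpoints in the same block of $\prec_2$, produces a pair in $N(\prec_2)\setminus N(\prec_1)$ of finite $\prec_2$-interval, and \Cref{lem:findcoverfinite} applied to $(\prec_1)\leq(\prec_2)$ in $\WO(\Tot)$ yields the desired lower wall of $\prec_2$. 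The main obstacle is to handle the pathological case in which every element of $D$ is a cross-block real reflection index (hence has infinite $\prec_2$-interval); here I would argue that such a $D$ is incompatible with the cover hypothesis, using translation-invariance to exhibit auxiliary elements $\rind{x, y+kn}$ or $\rind{x-kn, y}$ in $D$ that, combined with biclosure, either collapse into an imaginary inversion in $D$ or into a same-block real inversion, reducing to the previous case.
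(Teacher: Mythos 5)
Your forward directions $(a)\Rightarrow(b)$ and $(c)\Rightarrow(a)$, and your treatment of the imaginary equivalences $(d)\Leftrightarrow(e)\Leftrightarrow(f)$, are in the same spirit as the paper's proof. However, two of your steps have genuine gaps, and both gaps are plugged in the paper by a single preliminary claim that your proposal lacks.

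The paper begins by establishing the following: if $(\prec_1) < (\prec_2)$ in $\WO(\TTot_n)$, then there is either a lower wall of $(\prec_2)$ that is not in $\tN(\prec_1)$, \emph{or} an upper wall of $(\prec_1)$ that is in $\tN(\prec_2)$. In the cross-block case (the case you call pathological), the paper shows by restricting $(\prec_1)$ and $(\prec_2)$ to the two residue classes $a+n\ZZ$ and $b+n\ZZ$ --- a rank-two analysis over $\WO(\TTot_2)$ --- that $a$ and $b$ must lie in the \emph{same} block of $(\prec_1)$, and then applies \Cref{lem:findcoverfinite} to the \emph{reverse} of $(\prec_1)$ to produce the upper wall. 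This dualization is the load-bearing idea, and it is absent from your proposal.

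The first place this matters is $(b)\Rightarrow(c)$. You assert it ``follows from the same inversion-set calculation,'' but that calculation ($\tN(t_{ab}\cdot\prec) = \tN(\prec)\setminus\{\rind{a,b}\}$) was derived under the hypothesis that $\rind{a,b}$ is a lower wall, which is precisely what we do not yet know in $(b)$. Without the preliminary claim, being told only that $t_{ab}\cdot(\prec)$ is \emph{some} lower cover does not tell you which, or how many, reflection indices are removed. The paper deduces $(b)\Rightarrow(c)$ by applying its preliminary claim to the cover $t_{ab}\cdot(\prec)\lessdot(\prec)$ and then identifying the resulting single reflection index as $\rind{a,b}$.

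The second and larger gap is the cross-block case of the converse. Your strategy of hunting for a lower wall of $(\prec_2)$ inside $D$ works cleanly when $D$ contains an imaginary or same-block element, via \Cref{lem:findcoverfinite} as you say. But when every element of $D$ is a cross-block real reflection index, your proposed resolution --- translating to exhibit $\rind{x,y+kn}$ and arguing a ``collapse'' by biclosure --- is not obviously correct. For example, if both blocks of $(\prec_2)$ meeting $D$ are waxing and they merge into a single waxing block of $(\prec_1)$, then translation-invariance does not hand you an imaginary element of $D$, and I do not see a direct route to a same-block real element without the rank-two restriction argument. The paper's route, via upper walls of $(\prec_1)$, sidesteps this entirely: the cover is then recognized as $\prec_2 = \mathrm{flip}^\uparrow_{c,d}(\prec_1)$ for some upper wall $\rind{c,d}$ of $(\prec_1)$, and one then checks (using the forward direction) that $\rind{c,d}$ is a lower wall of $(\prec_2)$. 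You need either this dual argument or a genuine replacement for it; as written, the ``collapse'' sketch does not close the case.
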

\begin{proof} 
    We first prove that if $(\prec_1) < (\prec_2)$, then there is either a lower wall $\rind{a,b}$ of $(\prec_2)$ so that $\rind{a,b}$ is not an inversion of $(\prec_1)$ or there is an upper wall $\rind{a,b}$ of $(\prec_1)$ which is an inversion of $(\prec_2)$.  To see this, consider $(\prec_1),(\prec_2)$ as elements of $\WO(\Tot)$. There is some inversion $(a,b) \in N(\prec_2)$ which is not an inversion of $(\prec_1)$. If $[b,a]_{\prec_2}$ is finite, then we are done by \Cref{lem:findcoverfinite}. Otherwise, $b$ and $a$ are in different blocks of $(\prec_2)$.
    Now, examining the order that $(\prec_2)$ induces on the set $(a+n\ZZ) \cup (b+n\ZZ)$ (which, after renaming $a$ to $1$ and $\min\{b'\mid b'\equiv b\bmod n,~a<b'\}$ to $2$, must coincide with one of the TITOs in \Cref{fig:DyertS2}), we find that the only way we could have both $b\prec_2 a$ in different blocks and $a\prec_1 b$ is if $a$ and $b$ are in the same block of $(\prec_1)$. But this means that if we apply \Cref{lem:findcoverfinite} to the reverse of $(\prec_1)$, then there is some upper wall $(c,d)$ of $(\prec_1)$ so that $(c,d)\in N(\prec_2)$. Equivalently, $\rind{c,d}$ is an upper wall of $(\prec_1)$ which is also in $\tN(\prec_2)$. 

    (a)$\implies$(b+c): 
    If $\rind{a,b}$ is a lower wall of $(\prec)$, then
    $t_{ab}\cdot(\prec)$ is the result of swapping the adjacent pairs $b\precdot a$, $b+n \precdot a+n$, $b+2n\precdot a+2n$, etc. Hence these are only pairs which have a different orientation in $t_{ab}\cdot(\prec)$. It follows that $\tN(t_{ab}\cdot(\prec)) = \tN(\prec)\setminus \{\rind{a,b}\}$, so $t_{ab}\cdot(\prec)$ is covered by $(\prec)$.

    (b)$\implies$(c): If $t_{ab}\cdot (\prec)$ is covered by $(\prec)$, then 
    by the first paragraph there is some lower wall of $(\prec)$ which is not in $\tN(t_{ab}\cdot (\prec))$ or there is an upper wall of $t_{ab}\cdot (\prec)$ which is in $\tN(\prec)$. Applying (a)$\implies$(c) to either case, we get that there is some reflection index $\rind{a',b'}$ so that $\tN(t_{ab}\cdot (\prec)) = \tN(\prec)\setminus\{\rind{a',b'}\}$. Since the pair $(a,b)$ is reversed by $t_{ab}$, we must have $\rind{a',b'}=\rind{a,b}$.

    (c)$\implies$(a): Let $\prec'$ be a TITO so that $a\prec' b$ and $b\prec a$. If there were some $c$ so that $b\prec c \prec a$, then the fact that $a \prec' b$ would imply that $|\tN(\prec)\setminus \tN(\prec')| \geq 2$, by considering cases on whether $c$ is inverted with $b$ or $a$ in $(\prec)$. 

    (d)$\implies$(e): If $\rind{a,a+n}$ is a lower wall of $(\prec)$, then the block containing $a$ must have window $[\underline{a}]$ by \Cref{lem:fundTITO}. Hence, it is waning of size 1.

    (e)$\implies$(f): If the block containing $a$ in $(\prec)$ is waning of size 1, then its window is $[\underline{a}]$. Let $(\prec')$ be the TITO which coincides with $(\prec)$ except that the block containing $a$ has window $[a]$ (i.e. we have reversed the block containing $a$). Then $\tN(\prec')=\tN(\prec)\setminus \{\rind{a,a+kn} \mid k\geq 1\}$. There is no TITO strictly between $(\prec')$ and $(\prec)$ because by \Cref{lem:fundTITO}, there are only two possibilities for the ordering of $\ldots,a-n,a,a+n,\ldots$ by a TITO.

    (f)$\implies$(d): Let $(\prec')$ be the TITO with inversion set $\tN(\prec)\setminus \{\rind{a,a+kn} \mid k\geq 1\}$. Then the block containing $a$ is waxing in $(\prec)$ and waning in $(\prec')$. If there were some $c$ with $a+n \prec c \prec a$, then $c$ would be in the same block as $a$ in $(\prec)$, hence $c$ is also waxing in $(\prec)$ and therefore also waxing in $(\prec')$. Furthermore, we must have $a\prec' c \prec' a+n$, so $c$ and $a$ are also in the same block of $(\prec')$. But then $c$ and $a$ are a waxing and waning element in the same block of $(\prec')$, contradicting \Cref{lem:fundTITO}. 
\end{proof}

If $\rind{a,b}$ is a lower wall of the TITO $(\prec)$, then we write $\mathrm{flip}_{a,b}(\prec)$ for the TITO described in \Cref{thm:wallscovertSn}(c) or (f) (depending on whether $\rind{a,b}$ is real or imaginary, respectively). Then $\mathrm{flip}_{a,b}(\prec)$ is the unique TITO covered by $(\prec)$ so that $\rind{a,b}$ is not an inversion of $\mathrm{flip}_{a,b}(\prec)$.

\subsection{Cyclic arc diagrams and canonical join representations}\label{subsec:cyclicarcs}

Recall that an \emph{arc} (for $S_\infty$) is a tuple $(a,b,L,R)$ so that $a<b$ and $L\sqcup R = \{a+1,a+2,\ldots, b-2,b-1\}$. 

\begin{definition}
    A \newword{wrapped pre-arc} is an equivalence class of arcs under the relation induced by $(a,b,L,R)\sim (a+n,b+n,L+n,R+n)$. The equivalence class containing $(a,b,L,R)$ is denoted $\rind{a,b,L,R}$. A \newword{wrapped arc} is a wrapped pre-arc $\rind{a,b,L,R}$ such that
    \[ \{(a+kn,b+kn,L+kn,R+kn) \mid k\in \ZZ\} \]
    is a non-crossing collection. A wrapped arc is \newword{imaginary} if $a\equiv b \bmod n$; otherwise, it is \newword{real}.
\end{definition}

We will depict a wrapped (pre-)arc $\taa = \rind{a,b,L,R}$ in one of two ways: either by drawing the infinite arc diagram for the collection $\{(a+kn,b+kn,L+kn,R+kn) \mid k \in \ZZ\}$, or by drawing the single arc $(a,b,L,R)$ wrapped clockwise around a circle labeled by $\ZZ/n\ZZ$ as in \Cref{fig:shardarc}.  In the first case, the wrapped pre-arc is a wrapped arc if the arc diagram is non-crossing. In the second case, the wrapped pre-arc is a wrapped arc if it does not cross itself in its interior.  

The wrapped arcs $\rind{a,b,L,R},\rind{a',b',L',R'}$ \newword{cross} if there exist $j,k\in\ZZ$ so that the arcs $(a+jn,b+jn,L+jn,R+jn)$ and $(a'+kn,b'+kn,L'+kn,R'+kn)$ cross. 
A \newword{cyclic non-crossing collection} is a set of wrapped arcs so that no two wrapped arcs cross. Equivalently, $D$ is a cyclic non-crossing collection if
    \[ \bigcup_{\rind{a,b,L,R}\in D} \{(a+kn,b+kn,L+kn,R+kn) \mid k\in \ZZ\} \]
is a non-crossing collection. 

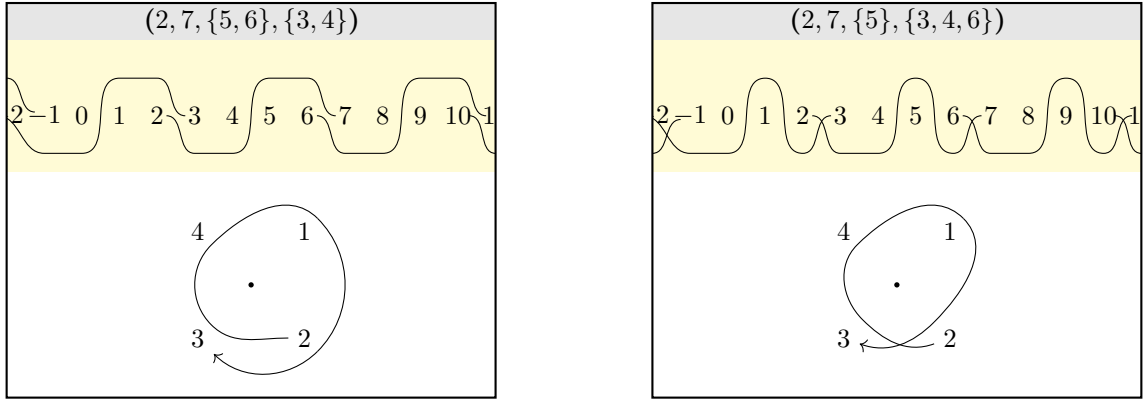
\begin{figure}
    \centering
    \begin{tikzpicture}[scale=.5]
    \fill[gray!20] (-6.5,6) rectangle (6.5,7);
    \fill[yellow!20] (-6.5,2.5) rectangle (6.5,6);
    \draw[thick] (-6.5,-3.5) rectangle (6.5,7);
    \clip (-6.5,-3.5) rectangle (6.5,7);
    \draw node at (0,6.5) {$\rind{2,7,\{5,6\},\{3,4\}}$};
		\begin{scope}[shift={(-3.5,4)}]
			\node (A1) at (0,0) {$1$};
			\node (A2) at (1,0) {$2$};
			\node (A3) at (2,0) {$3$};
			\node (A4) at (3,0) {$4$};
			\node (A5) at (4,0) {$5$};
			\node (A6) at (5,0) {$6$};
			\node (A7) at (6,0) {$7$};
                \node at (7,0) {$8$};
                \node at (8,0) {$9$};
                \node at (9,0) {$10$};
                \node at (10,0) {$11$};
                
                \node at (-1,0) {$0$};
                \node at (-2,0) {$-1$};
                \node at (-3,0) {$-2$};

                \draw (1.25,0) to[out=0,in=180] (2,-1) to[out=0,in=180] (3,-1)
                to[out=0,in=180] (4, 1)
                to[out=0,in=180] (5, 1)
                to[out=0,in=180] (5.75,0);

                \draw (5.25,0) to[out=0,in=180] (6,-1) to[out=0,in=180] (7,-1)
                to[out=0,in=180] (8, 1)
                to[out=0,in=180] (9, 1)
                to[out=0,in=180] (9.75,0);
                \draw (9.3,0) to[out=0,in=180] (10,-1) to[out=0,in=180] (11,-1)
                to[out=0,in=180] (12, 1)
                to[out=0,in=180] (13, 1)
                to[out=0,in=180] (13.75,0);

                \draw (-3.25,0) to[out=0,in=180] (-2,-1) to[out=0,in=180] (-1,-1)
                to[out=0,in=180] (0, 1)
                to[out=0,in=180] (1, 1)
                to[out=0,in=180] (1.75,0);

                \draw (-7.25,0) to[out=0,in=180] (-6,-1) to[out=0,in=180] (-5,-1)
                to[out=0,in=180] (-4, 1)
                to[out=0,in=180] (-3, 1)
                to[out=0,in=180] (-2.25,.1);
		\end{scope}
	
		\begin{scope}[shift={(0,-.5)}]
                \fill (0,0) circle (2pt);
			\node (A1) at (45:2) {$1$};
			\node (A4) at (135:2) {$4$};
			\node (A3) at (225:2) {$3$};
			\node (A2) at (315:2) {$2$};
			\draw (A2) to[out=-180,in=-45] (225:1.5) to[out=135,in=-135] (135:1.5) to[out=45,in=135] (45:2.5);
			\draw (45:2.5) to[out=-45, in=45] (315:2.5);
			\draw[->] (315:2.5) to[out=-135,in=-45] (A3);
		\end{scope}
	\end{tikzpicture}
        \hfill
        \begin{tikzpicture}[scale=.5]
    \fill[gray!20] (-6.5,6) rectangle (6.5,7);
    \fill[yellow!20] (-6.5,2.5) rectangle (6.5,6);
    \draw[thick] (-6.5,-3.5) rectangle (6.5,7);
    \clip (-6.5,-3.5) rectangle (6.5,7);
    \draw node at (0,6.5) {$\rind{2,7,\{5\},\{3,4,6\}}$};
		\begin{scope}[shift={(-3.5,4)}]
			\node (A1) at (0,0) {$1$};
			\node (A2) at (1,0) {$2$};
			\node (A3) at (2,0) {$3$};
			\node (A4) at (3,0) {$4$};
			\node (A5) at (4,0) {$5$};
			\node (A6) at (5,0) {$6$};
			\node (A7) at (6,0) {$7$};
                \node at (7,0) {$8$};
                \node at (8,0) {$9$};
                \node at (9,0) {$10$};
                \node at (10,0) {$11$};
                
                \node at (-1,0) {$0$};
                \node at (-2,0) {$-1$};
                \node at (-3,0) {$-2$};

                \draw (1.25,0) to[out=0,in=180] (2,-1) to[out=0,in=180] (3,-1)
                to[out=0,in=180] (4, 1)
                to[out=0,in=180] (5, -1)
                to[out=0,in=180] (5.75,0);

                \draw (5.25,0) to[out=0,in=180] (6,-1) to[out=0,in=180] (7,-1)
                to[out=0,in=180] (8, 1)
                to[out=0,in=180] (9, -1)
                to[out=0,in=180] (9.75,0);
                \draw (9.3,0) to[out=0,in=180] (10,-1) to[out=0,in=180] (11,-1)
                to[out=0,in=180] (12, 1)
                to[out=0,in=180] (13, -1)
                to[out=0,in=180] (13.75,0);

                \draw (-3.25,0) to[out=0,in=180] (-2,-1) to[out=0,in=180] (-1,-1)
                to[out=0,in=180] (0, 1)
                to[out=0,in=180] (1, -1)
                to[out=0,in=180] (1.75,0);

                \draw (-7.25,0) to[out=0,in=180] (-6,-1) to[out=0,in=180] (-5,-1)
                to[out=0,in=180] (-4, 1)
                to[out=0,in=180] (-3, -1)
                to[out=0,in=180] (-2.25,-.1);
		\end{scope}
	
		\begin{scope}[shift={(0,-.5)}]
                \fill (0,0) circle (2pt);
			\node (A1) at (45:2) {$1$};
			\node (A4) at (135:2) {$4$};
			\node (A3) at (225:2) {$3$};
			\node (A2) at (315:2) {$2$};
                \draw (A2) to[out=200,in=-45] (225:1.3) to[out=135,in=-135] (135:1.5) to[out=45,in=135] (45:2.5);
			\draw (45:2.5) to[out=-45, in=45] (315:1.4);
			\draw[->] (315:1.4) to[out=-135,in=-20] (A3);
		\end{scope}
	\end{tikzpicture}
    \caption{On the left, a wrapped arc drawn in two ways: ``unwrapped'' and ``wrapped''. On the right, a wrapped pre-arc drawn in two ways; note in either way of drawing there is a crossing, so it is not a wrapped arc.}
    \label{fig:enter-label}
\end{figure}

We say the wrapped pre-arc $\taa = \rind{a,b,L,R}$ is a \newword{lower arc} of a TITO $(\prec)$ if the arc $(a,b,L,R)$ is a lower arc of $(\prec)$, viewed as an element of $\Tot$. By \Cref{prop:noncrossingSinfty}, each lower arc is a wrapped arc.  We write $\tLowerArcs(\prec)$ for the set of lower (wrapped) arcs of $(\prec)$. By \Cref{prop:noncrossingSinfty}, $\tLowerArcs(\prec)$ is a non-crossing collection. Recall that $(\prec)$ is \emph{widely generated} if for any $(\prec')<(\prec)$, there is some $(\prec')\leq (\prec^s) \lessdot (\prec)$.

\begin{lemma}\label{lem:titowidetot}
    If $(\prec)$ is a TITO, then $(\prec)$ is widely generated as an element of $\WO(\TTot_n)$ if and only if it is widely generated as an element of $\WO(\Tot)$.
\end{lemma}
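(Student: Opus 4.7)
I would prove the two directions separately, both turning on the correspondence between $\Tot$-lower-walls of a TITO and its $\TTot_n$-lower-walls provided by \Cref{thm:wallscovertSn}.

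For $(\Leftarrow)$: given $(\prec')<(\prec)$ in $\WO(\TTot_n)$, regard the inequality as holding in $\WO(\Tot)$ and apply wide generation there to obtain a $\Tot$-cover $(\prec^s)\lessdot(\prec)$ with $(\prec')\leq(\prec^s)$. This cover removes a $\Tot$-lower-wall $(a,b)$ of $(\prec)$, and by \Cref{thm:wallscovertSn} the class $\rind{a,b}$ is a $\TTot_n$-lower-wall, producing the TITO cover $\mathrm{flip}_{a,b}(\prec)$. To verify $(\prec')\leq \mathrm{flip}_{a,b}(\prec)$: in the real case this is immediate from $(a,b)\notin N(\prec')$; in the imaginary case $b=a+n$, translation-invariance of the TITO $(\prec')$ together with transitivity promotes $a\prec' a+n$ to $a\prec' a+kn$ for all $k\geq 1$, so none of the imaginary classes $\rind{a,a+kn}$ belongs to $\tN(\prec')$.

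For $(\Rightarrow)$: given $(\prec')<(\prec)$ in $\WO(\Tot)$, I want a $\Tot$-cover of $(\prec)$ above $(\prec')$. By \Cref{lem:findcoverfinite}, it suffices to find $(a,b)\in N(\prec)\setminus N(\prec')$ with $[b,a]_\prec$ finite; equivalently, $a$ and $b$ in the same block of $(\prec)$. Suppose, for contradiction, that every element of $N(\prec)\setminus N(\prec')$ is cross-block, so $(\prec)|_B=(\prec')|_B$ for each block $B$ of $(\prec)$. Using \Cref{thm:latticetSn} to know that $\WO(\TTot_n)$ is a complete sublattice of $\WO(\Tot)$, let $(\prec^T)$ be the largest TITO below $(\prec')$. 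Then $(\prec^T)\leq (\prec')<(\prec)$, so $(\prec^T)<(\prec)$ in $\WO(\TTot_n)$. Wide generation in $\TTot_n$ yields a TITO-cover $\mathrm{flip}_{a^*,b^*}(\prec)\geq(\prec^T)$; by \Cref{thm:wallscovertSn} the pair $(a^*,b^*)$ is an adjacent pair in $(\prec)$, hence a within-block pair.

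The plan is to drive a contradiction by showing that $\rind{a^*,b^*}$ must lie in $\tN(\prec^T)$, contradicting $\mathrm{flip}_{a^*,b^*}(\prec)\geq(\prec^T)$. Since $(a^*,b^*)$ is within-block and $(\prec)|_B=(\prec')|_B$, all translates $(a^*+kn,b^*+kn)$ lie in $N(\prec')$, so $\rind{a^*,b^*}$ belongs to the translation-invariant part of $N(\prec')$; maximality of $\tN(\prec^T)$ among biclosed subsets with this property should force $\rind{a^*,b^*}\in \tN(\prec^T)$. The main obstacle is precisely this last step: one needs that the collection of within-block inversions of $(\prec)$ (together with what biclosure forces) is contained in $\tN(\prec^T)$, which is essentially the statement that the ``same-block'' structure of $(\prec)$ is preserved by passing from $(\prec')$ to its TITO hull $\mu(\prec')=(\prec^T)$. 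I expect this to be the real work of the proof, carried out by checking that augmenting $\tN(\prec^T)$ by the translates of $\rind{a^*,b^*}$ keeps the set biclosed and inside $\tN^-(\prec')$, so maximality of $\tN(\prec^T)$ delivers $\rind{a^*,b^*}\in \tN(\prec^T)$ and hence the desired contradiction.
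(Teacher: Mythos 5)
Your $(\Leftarrow)$ direction (wide generation in $\WO(\Tot)$ implies wide generation in $\WO(\TTot_n)$) is correct and essentially matches the paper's argument, with the welcome addition that you spell out the imaginary-wall case (promoting $a\prec' a+n$ to $a\prec' a+kn$ via translation-invariance and transitivity), which the paper's proof elides.

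Your $(\Rightarrow)$ direction takes a genuinely different route and, as you flag yourself, has a gap. The paper instead sets $(\prec'')=\bigvee_{\aa\in\LowerArcs(\prec)}j_\aa$, observes this is a TITO $\leq(\prec)$, uses wide generation in $\WO(\TTot_n)$ together with \Cref{lem:joinofarcsSinfty} to force $(\prec'')=(\prec)$, and then reads off wide generation in $\WO(\Tot)$ from \Cref{prop:noncrossingSinfty}; this is short and constructive. Your approach---reduce via \Cref{lem:findcoverfinite} to exhibiting a within-block pair in $N(\prec)\setminus N(\prec')$, then argue by contradiction using the maximal TITO $(\prec^T)\leq(\prec')$---is sound in outline, but the step you identify as ``the real work,'' namely $\rind{a^*,b^*}\in\tN(\prec^T)$, is not obtained by the biclosure/maximality argument you sketch: inversion sets of TITOs are not characterized simply as biclosed subsets of $\tT$, so maximality among biclosed sets contained in the translation-invariant part of $N(\prec')$ does not directly deliver the containment. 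The gap does close, but by a different device: let $\aa=(a^*,b^*,L,R)$ be the lower arc of $(\prec)$ at $(a^*,b^*)$. Since $(a^*+kn,b^*+kn)\in N(\prec')$ for all $k$ and $(\prec')\leq(\prec)$, \Cref{lem:arcconeSinfty} gives $j_{\aa+kn}\leq(\prec')$ for every $k$, hence $\sigma=\bigvee_{k\in\ZZ}j_{\aa+kn}\leq(\prec')$. The inversion set $N(\sigma)$ is translation-invariant (being the closure of a translation-invariant union), so $\sigma$ is a TITO and therefore $\sigma\leq(\prec^T)$; since $(a^*,b^*)\in N(\sigma)$, this forces $\rind{a^*,b^*}\in\tN(\prec^T)$, the contradiction you need. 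As written, though, the proof of $(\Rightarrow)$ stops short of that.
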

\begin{proof}
    First, assume $(\prec)$ is widely generated in $\WO(\TTot_n)$. Consider the total order $(\prec')$ which is the join $\bigvee_{\aa \in \LowerArcs(\prec)}$ in $\WO(\Tot)$. Then $(\prec')$ is translation invariant, since it has an inversion set which is the closure of the translation invariant set $\bigcup_{\aa\in \LowerArcs(\prec)} N(j_\aa)$. Hence $(\prec'),(\prec)$ are both TITOs and $(\prec')\leq (\prec)$. If it were the case that $(\prec')<(\prec)$, then by the fact that $(\prec)$ is widely generated and \Cref{thm:wallscovertSn}, there would be an $\rind{a,b} \in \widetilde{\LowerWalls}(\prec)$ so that $(\prec')\leq \mathrm{flip}_{a,b}(\prec)$. In particular, $(a,b)\not\in N(\prec')$, contradicting the fact given by \Cref{lem:joinofarcsSinfty} that $(a,b)$ is a lower wall of $(\prec')$. Hence $(\prec)=(\prec')$, which is widely generated in $\WO(\Tot)$ by \Cref{lem:joinofarcsSinfty}. 

    Now assume $(\prec)$ is a TITO which is widely generated in $\WO(\Tot)$. If $(\prec')$ is a TITO and $(\prec')<(\prec)$, then there is some $(a,b)\in \LowerWalls(\prec)$ so that $(\prec')\leq (a,b)\cdot (\prec)$. In particular, $\rind{a,b}\not\in \tN(\prec')$. By \Cref{thm:wallscovertSn}, $(\prec')\leq \mathrm{flip}_{a,b}(\prec)$, and $\mathrm{flip}_{a,b}(\prec)$ is covered by $(\prec)$ in $\WO(\TTot_n)$. Hence, $(\prec)$ is widely generated in $\WO(\TTot_n)$. 
\end{proof}

In \Cref{thm:TITOcompact} we will give another characterization of the widely generated TITOs as those TITOs that do not have two consecutive waxing blocks.

\begin{theorem}\label{thm:noncrossingarcsTITO}
If $(\prec)$ is a TITO, then $\tLowerArcs(\prec)$ is a cyclic non-crossing collection. The map $(\prec)\mapsto \tLowerArcs(\prec)$ restricts to a bijection between widely generated elements of $\WO(\TTot_n)$ and cyclic non-crossing collections.
\end{theorem}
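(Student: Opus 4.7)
The plan is to bootstrap from the analogous result \Cref{prop:noncrossingSinfty} for $\WO(\Tot)$, using the correspondence between cyclic non-crossing collections (of wrapped arcs) and translation-invariant non-crossing collections (of arcs for $S_\infty$), together with \Cref{lem:titowidetot} which identifies widely generated TITOs across the two lattices.

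First I would observe that, directly from the definitions, the \emph{unwrapping} of $\tLowerArcs(\prec)$ --- meaning the union of the equivalence classes representing its elements --- coincides with $\LowerArcs(\prec)$ when $(\prec)$ is regarded as an element of $\Tot$. Since $(\prec)$ is translation-invariant, this set of arcs is closed under the shift $(a,b,L,R)\mapsto (a+n,b+n,L+n,R+n)$, and \Cref{prop:noncrossingSinfty} tells us it is non-crossing. By the definition of a cyclic non-crossing collection, this yields that $\tLowerArcs(\prec)$ is cyclic non-crossing, establishing the first claim.

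For the bijection, injectivity on widely generated TITOs is immediate: if two such TITOs have the same wrapped lower arcs, then they have the same unwrapped lower arcs, are both widely generated in $\WO(\Tot)$ by \Cref{lem:titowidetot}, and hence coincide by \Cref{prop:noncrossingSinfty}. For surjectivity, given a cyclic non-crossing collection $D$ with unwrapping $\tilde D$, I would set $(\prec) := \bigvee_{\aa \in \tilde D} j_\aa$ computed in $\WO(\Tot)$. Then \Cref{prop:noncrossingSinfty} makes $(\prec)$ a widely generated element of $\WO(\Tot)$ with $\LowerArcs(\prec) = \tilde D$, and \Cref{lem:titowidetot} upgrades this wide-generation to $\WO(\TTot_n)$ --- provided that $(\prec)$ is a TITO in the first place.

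The main obstacle is checking precisely this last point: that the infinite join $(\prec)$ is translation-invariant. The plan is to observe that $\tilde D$ is translation-invariant as a set of arcs, so the union $\bigcup_{\aa \in \tilde D} N(j_\aa)$ of inversion sets is also translation-invariant; since the closure operator appearing in \Cref{prop:latticeSinfty} is built from concatenating triples $(a,b),(b,c) \rightsquigarrow (a,c)$, it commutes with the shift-by-$n$ action on $\ZZ$, so $N(\prec) = \overline{\bigcup_{\aa\in \tilde D} N(j_\aa)}$ is translation-invariant. Hence $(\prec)$ is a TITO, and by construction $\tLowerArcs(\prec) = D$, which completes the bijection.
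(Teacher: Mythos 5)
Your proposal is correct and follows essentially the same route as the paper: the paper's (very terse) proof cites exactly \Cref{lem:titowidetot} and \Cref{prop:noncrossingSinfty}, together with the observation that $\bigvee_{\aa\in D}j_\aa$ is translation-invariant when $D$ is a shift-invariant non-crossing collection — which is precisely the step you verify in your final paragraph via the compatibility of the closure operator with the shift-by-$n$ action.
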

\begin{proof}
    This follows from \Cref{lem:titowidetot,prop:noncrossingSinfty}, using the fact that $\bigvee_{\aa\in D}j_\aa$ is translation invariant if $D$ is a non-crossing collection so that $(a,b,L,R)\in D$ implies $(a\pm n,b\pm n, L\pm n, R\pm n)\in D$.
\end{proof}

The \JIs in $\WO(\TTot_n)$ are exactly the widely generated TITOs with exactly one lower wall. Hence \Cref{thm:noncrossingarcsTITO} gives a bijection between $\JIrrc(\WO(\TTot_n))$ and wrapped arcs. Write $j_{\taa}$ for the unique \JI with $\taa$ as a lower arc.

\begin{theorem}\label{thm:canonicaljointSn}
Let $(\prec)\in \WO(\TTot_n)$. Then $(\prec)$ has a canonical join representation if and only if $(\prec)$ is widely generated. In this case, the canonical join representation of $(\prec)$ is 
\[ \bigvee_{\taa\in \tLowerArcs(\prec)} j_{\taa}. \]
\end{theorem}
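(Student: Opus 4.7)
The plan is to mirror the proof of Theorem \ref{thm:canonicaljoinSinfty} almost verbatim, with the wrapped/cyclic analogs of arcs and Lemma \ref{lem:arcconeSinfty} in place of their $\Tot$-counterparts. The only direction requiring work is the ``if'' direction; the ``only if'' direction is immediate from Lemma \ref{lem:canonicaljoiniswide}, which shows that any element of a complete lattice with a canonical join representation must be widely generated.

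So assume $(\prec)$ is widely generated in $\WO(\TTot_n)$. By Theorem \ref{thm:noncrossingarcsTITO} (and its proof, which also produces the inverse), we have the equality $(\prec) = \bigvee_{\taa \in \tLowerArcs(\prec)} j_{\taa}$; this join may be computed either in $\WO(\TTot_n)$ or in $\WO(\Tot)$ by Theorem \ref{thm:latticetSn}. To promote this to a canonical join representation, I must verify irredundance and the $\ll$ property. For irredundance, any proper cyclic non-crossing subcollection $D \subsetneq \tLowerArcs(\prec)$ yields, by Theorem \ref{thm:noncrossingarcsTITO}, a widely generated TITO $\bigvee_{\taa\in D}j_\taa$ whose lower arc set is exactly $D$, and so cannot equal $(\prec)$.

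For the $\ll$ condition, fix $V \subseteq \WO(\TTot_n)$ with $\bigvee V = (\prec)$ and fix $\taa = \rind{a,b,L,R} \in \tLowerArcs(\prec)$. I distinguish two cases. If some $v \in V$ has $\rind{a,b} \in \tN(v)$, then by translation-invariance of $v$ each pair $(a+kn, b+kn)$ is an inversion of $v$ as an element of $\WO(\Tot)$; applying Lemma \ref{lem:arcconeSinfty} to each translate $(a+kn,b+kn,L+kn,R+kn)$ gives $j_{(a+kn,b+kn,L+kn,R+kn)} \leq v$ for every $k \in \ZZ$, and joining over $k$ yields $j_{\taa} \leq v$ as desired. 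Otherwise, no $v\in V$ has $\rind{a,b}$ as an inversion; since $\rind{a,b}\in \tN(\prec) = \overline{\bigcup_{v\in V}\tN(v)}$, there is a chain $a = b_0 < b_1 < \cdots < b_{k+1} = b$ with $k\geq 1$ and some $v_i\in V$ with $\rind{b_i,b_{i+1}}\in \tN(v_i)$. Then $b_1 \prec_{v_0} a$ and $b \prec_{v_k} b_1$, and monotonicity $v_i \leq (\prec)$ gives $b \prec b_1 \prec a$, contradicting that $\rind{a,b}$ is a lower wall of $(\prec)$.

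The main obstacle I anticipate is case (a) of the $\ll$ verification: one needs to convert the $\WO(\Tot)$-statement from Lemma \ref{lem:arcconeSinfty} (which handles a single arc) into a statement about the wrapped arc $\taa$, whose corresponding \JI $j_{\taa}$ is the infinite join of the translates $j_{(a+kn,b+kn,L+kn,R+kn)}$. The translation-invariance of $v$ makes this routine, but it is the one step where the cyclic geometry of $\WO(\TTot_n)$ genuinely differs from the $\Tot$ setting and must be handled carefully. Everything else is a faithful transcription of the $S_\infty$ argument.
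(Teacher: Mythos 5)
Your proposal is correct and takes essentially the same approach as the paper: the paper derives the theorem directly from \Cref{lem:canonicaljoiniswide}, \Cref{lem:titowidetot}, and \Cref{thm:canonicaljoinSinfty}, together with the relation $j_{\rind{a,b,L,R}} = \bigvee_{k\in\ZZ} j_{(a+kn,b+kn,L+kn,R+kn)}$, whereas you unwind those references and re-run the $\ll$-verification of \Cref{thm:canonicaljoinSinfty} explicitly in the cyclic setting. The translation-invariance observation you highlight in your closing paragraph is precisely the content of the relation the paper invokes, so the two arguments are the same argument expressed at different levels of detail.
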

\begin{proof}
    This follows from \Cref{lem:canonicaljoiniswide,lem:titowidetot,thm:canonicaljoinSinfty}, using that \[j_{\rind{a,b,L,R}} = \bigvee_{k\in \ZZ} j_{(a+kn,b+kn,L+kn,R+kn)}\] (as can be read off from the arc diagram). 
\end{proof}

It is useful to have an explicit description of $j_{\taa}$. We can quickly check that the following elements have $\taa$ as their unique lower arc; furthermore, by comparing with the criterion that will be given in \Cref{thm:TITOcompact}(f), we can also see that they are widely generated. Hence by \Cref{lem:JIwide}, each of these elements is a \JI.



Let $\taa = \rind{a, b, L,R}$ 
be a wrapped arc. 

\textbf{Case 1:} If $b-a < n$, then with $L=\{\ell_1<\cdots < \ell_i\}$ and $R=\{r_1 < \cdots < r_j\}$, $j_{\taa}$ has window notation
\[ j_{\taa} = [\ell_1,\ldots, \ell_i, b,a,r_1,\ldots,r_j,b+1,b+2,\ldots, n+a-1]. \]

\textbf{Case 2:} If $b-a = n$, then with $L=\{\ell_1<\cdots < \ell_i\}$ and $R=\{r_1 < \cdots < r_j\}$,  $j_{\taa}$ has window notation
\[ j_{\taa} =  [\ell_1,\ldots,\ell_i][\underline{a}][r_1,\ldots, r_j]. \]

\textbf{Case 3:} If $b-a > n$ and $a+n \in R$, 
then for an integer $x$ let $\mathcal{C}(x)$ denote the maximum element of $L\cup \ZZ_{<a}$ which is congruent to $x$ mod $n$. 
Let $c_1,\ldots, c_{n-2}$ be the elements of $\{ \mathcal{C}(x) \mid x\not\equiv a,b \mod n  \}$ in sorted order. Then
$j_{\taa}$ has window notation 
\[ j_{\taa} = [c_1,\ldots, c_{n-2}, b, a]. \]

\textbf{Case 4:} If $b-a >n$ and $a+n \in L$, then for an integer $x$ let $L_x$ be the elements of $L$ congruent to $x$ mod $n$ and let $R_x$ be the elements of $R$ congruent to $x$ mod $n$. Let $\cL$ be the set of integers $x$ so that $L_x\neq\varnothing$ and $R_x=\varnothing$, let $\cC$ be the set of integers $x$ so that $L_x,R_x\neq \varnothing$, and let $\cR$ be the set of integers $x$ so that $R_x\neq \varnothing$ and $L_x=\varnothing$. 
Let $\ell_{i_1}< \cdots< \ell_{i_j}$ be the elements of $\{ \max L_x  \mid  x \in \cL \}$, and let $r_{i_1}<\cdots < r_{i_k}$ be the elements of $\{\min R_x \mid x \in \cR\}$.  Let $c_1<\cdots < c_r$ be the elements of $\{ \max L_x \mid x \in \cC\}$. Then $j_{\taa}$ has window notation
\[ j_{\taa} = [\ell_{i_1}, \ldots, \ell_{i_j}][\underline{c_1,\ldots, c_r, b,a}][r_{i_1},\ldots, r_{i_k} ]. \]


\section{Profinite lattices}\label{sec:profinite}
As we will see, an important feature of $\WO(\TTot_n)$ and $\WO(\Tot)$ is that they are both \emph{profinite lattices}, meaning an inverse limit of a directed system of finite lattices. This property will allow us to reduce many questions about $\WO(\TTot_n)$ and $\WO(\Tot)$ to questions about their finite lattice quotients. 

For convenience, we will say that an equivalence relation $\equiv$ on $L$ is a \newword{cofinite congruence} if it is a complete lattice congruence and the quotient lattice $L/{\equiv}$ is finite.

\begin{lemma}\label{lem:refinementcof}
    If $L$ is a complete lattice and $\equiv_1,\ldots, \equiv_k$ is a finite list of cofinite congruences, then there is a cofinite congruence simultaneously refining $\equiv_1,\ldots,\equiv_k$.
\end{lemma}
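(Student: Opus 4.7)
The plan is to take $\equiv$ to be the common refinement of $\equiv_1,\ldots,\equiv_k$, defined by declaring $x\equiv y$ if and only if $x\equiv_i y$ for every $i=1,\ldots,k$. Then I will verify that $\equiv$ is a cofinite congruence using the equivalence (a)$\Leftrightarrow$(b) of \Cref{prop:congruence}.

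Applying \Cref{prop:congruence}(b) to each $\equiv_i$ gives a complete lattice homomorphism $\eta_i\colon L\to L/{\equiv_i}$ whose kernel is $\equiv_i$. I would then assemble these into a single map
\[ \eta\colon L\longrightarrow \prod_{i=1}^k L/{\equiv_i}, \qquad \eta(x)=(\eta_1(x),\ldots,\eta_k(x)). \]
The codomain is a finite (hence complete) lattice under componentwise joins and meets; since joins and meets in a product of lattices are computed componentwise, $\eta$ is a complete lattice homomorphism, being so in each coordinate.

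Two observations then finish the argument. First, $\eta(x)=\eta(y)$ if and only if $\eta_i(x)=\eta_i(y)$ for every $i$, which is exactly $x\equiv y$; by \Cref{prop:congruence}(b), this shows $\equiv$ is a complete lattice congruence. Second, the quotient $L/{\equiv}$ is in bijection with the image $\eta(L)$, which sits inside the finite lattice $\prod_i L/{\equiv_i}$ and is therefore finite. Finally, $x\equiv y$ trivially implies $x\equiv_i y$, so $\equiv$ refines each $\equiv_i$.

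Since the ingredients are standard, no step presents a real obstacle; the only point to keep in mind is that a map into a product of complete lattices is a complete lattice homomorphism precisely when each of its coordinate maps is, which follows directly from the componentwise description of joins and meets in the product.
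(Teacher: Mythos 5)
Your proof is correct and follows essentially the same route as the paper: both assemble the quotient maps into a complete lattice homomorphism $\eta\colon L\to\prod_{i}L/{\equiv_i}$, invoke \Cref{prop:congruence} to see that the kernel $\equiv$ is a complete lattice congruence, and bound $|L/{\equiv}|$ by the (finite) size of the product. The only difference is cosmetic — you spell out why $\eta$ is a complete lattice homomorphism, which the paper leaves implicit.
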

\begin{proof}
    The quotient homomorphisms $L\to L/{\equiv}_i$ induce a complete lattice homomorphism $\eta:L \to L/{\equiv_1} \times \cdots \times L/{\equiv}_k$. Let $L'$ be the image of $\eta$. By \Cref{prop:congruence}, there is a complete lattice congruence $\equiv$ on $L$ so that $x\equiv y$ if and only if $\eta(x)=\eta(y)$. Then $\equiv$ is a common refinement of $\equiv_1,\ldots,\equiv_k$. Furthermore, $\equiv$ is cofinite since $|L/{\equiv}| \leq |L/{\equiv}_1|\cdot \cdots \cdot |L/{\equiv}_k| < \infty$. 
\end{proof}

\begin{definition}\label{def:profinite}
    A complete lattice $L$ is called \newword{profinite} if for any $x,y\in L$, if $x\neq y$ then there is a cofinite lattice congruence $\equiv$ on $L$ so that $x\not\equiv y$.
\end{definition}

We get the following corollary of \Cref{lem:refinementcof}.
\begin{corollary}\label{lem:cofinitelist}
    If $L$ is a profinite lattice, then for any finite list $x_1\neq y_1, \ldots, x_k\neq y_k$ there is some cofinite congruence $\equiv$ on $L$ so that $x_1 \not\equiv y_1, \ldots,$ and $x_k\not\equiv y_k$.
\end{corollary}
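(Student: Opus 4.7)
The plan is to combine the definition of profinite directly with \Cref{lem:refinementcof}. For each index $i \in \{1,\ldots,k\}$, since $x_i \neq y_i$ and $L$ is profinite, I would invoke \Cref{def:profinite} to extract a cofinite congruence $\equiv_i$ with $x_i \not\equiv_i y_i$. This gives me a finite list $\equiv_1,\ldots,\equiv_k$ of cofinite congruences, one tailored to separate each pair.

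Next I would apply \Cref{lem:refinementcof} to produce a single cofinite congruence $\equiv$ which simultaneously refines all of $\equiv_1,\ldots,\equiv_k$. The key observation is that refinement is exactly the right property to preserve the separations: if $\equiv$ refines $\equiv_i$, then by definition $x \equiv y$ implies $x \equiv_i y$, so taking the contrapositive, $x_i \not\equiv_i y_i$ forces $x_i \not\equiv y_i$. Applying this for each $i$ simultaneously yields the desired $\equiv$.

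There is essentially no obstacle here; the statement is a routine packaging of the two preceding results. The only subtlety worth mentioning is to make sure one's convention on ``refinement'' matches the direction of implication used in \Cref{lem:refinementcof} (which was built from the product homomorphism $\eta:L \to \prod_i L/{\equiv_i}$, so that $x \equiv y$ exactly when $\eta(x)=\eta(y)$, which clearly implies $x \equiv_i y$ for every $i$). With that remark in place, the corollary follows in three lines.
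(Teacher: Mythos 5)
Your proof is correct and is exactly the argument the paper has in mind: the paper states this as a corollary of \Cref{lem:refinementcof} with no further explanation, and your two-step packaging (one separating congruence per pair via \Cref{def:profinite}, then a common cofinite refinement via \Cref{lem:refinementcof}) is the intended route. The remark about the direction of refinement is sound and matches the construction in \Cref{lem:refinementcof}.
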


\begin{lemma}\label{lem:sublatticeprofinite}
If $L'$ is a complete sublattice of a profinite lattice $L$, then $L'$ is profinite.
\end{lemma}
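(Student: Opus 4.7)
The plan is to exploit \Cref{prop:sublatticecongruence}, which says that any complete lattice congruence on $L$ restricts to one on the complete sublattice $L'$, together with the fact that the quotient by the restricted congruence embeds into the quotient by the original congruence.

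More concretely, let $x \neq y$ be two elements of $L'$. Viewing them as distinct elements of $L$ and invoking the hypothesis that $L$ is profinite, I would obtain a cofinite complete lattice congruence $\equiv$ on $L$ such that $x \not\equiv y$. By \Cref{prop:sublatticecongruence}, the restriction $\equiv'$ of $\equiv$ to $L'$ is a complete lattice congruence on $L'$, and evidently $x \not\equiv' y$ since restriction does not merge any additional classes. To conclude that $\equiv'$ is cofinite, I would note that the map $L'/{\equiv'} \to L/{\equiv}$ sending $[z]_{\equiv'} \mapsto [z]_\equiv$ is well-defined and injective (two elements of $L'$ are $\equiv'$-equivalent precisely when they are $\equiv$-equivalent in $L$), so $|L'/{\equiv'}| \leq |L/{\equiv}| < \infty$. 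Since $x, y$ were arbitrary distinct elements of $L'$, this verifies the definition of profinite for $L'$.

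There is no real obstacle here: the work has already been done in \Cref{prop:sublatticecongruence}, and all that remains is the observation about quotient cardinalities, which is immediate. The only small point to be careful about is ensuring that ``complete sublattice'' is used in the sense required by \Cref{prop:sublatticecongruence} (i.e., closed under arbitrary meets and joins taken in $L$), so that the restriction of $\equiv$ genuinely lands in $L'$ and respects arbitrary meets and joins there.
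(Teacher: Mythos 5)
Your proof is correct and follows essentially the same route as the paper: invoke \Cref{prop:sublatticecongruence} to restrict the cofinite congruence separating $x$ and $y$, and observe that the induced map $L'/{\equiv'} \to L/{\equiv}$ is injective so the restricted congruence is also cofinite. The paper phrases the cardinality bound via the image of the composite $L' \hookrightarrow L \to L/{\equiv}$, which is the same observation.
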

\begin{proof}
Pick $x,y \in L'$ with $x\neq y$. Since $L$ is profinite, there is some cofinite congruence $\equiv$ on $L$ so that $x\not\equiv y$. The restriction of $\equiv$ to $L'$ is a complete lattice congruence on $L'$ by \Cref{prop:sublatticecongruence}. Furthermore, the restriction of $\equiv$ to $L'$ is cofinite, for instance because $L'/{\equiv}$ can be identified with the image of the composite map $L'\hookrightarrow L \to L/{\equiv}$ so that $|L'/{\equiv}|\leq |L/{\equiv}|$. Hence $L'$ is profinite.
\end{proof}

\begin{aside}
\begin{remark}
    It is possible that a complete lattice quotient of a profinite lattice is no longer profinite. See \cite[Example 2.13]{Iyama2018} for an example.
\end{remark}
\end{aside}

We will focus on a particular family of congruences with an explicit description. We say the family $\{\equiv_i\}_{i\in I}$ \newword{detects equality} if, whenever $x\equiv_i 
 y$ for all $i\in I$, then $x=y$.
Then a complete lattice $L$ is profinite if it has a collection of cofinite lattice congruences which detects equality.

Write $T_{A,B} \coloneqq \{(a,b) \mid A\leq a < b \leq B\}$. For $A,B\in \ZZ$, define a congruence $\equiv_{A,B}$ on $\Tot$ by declaring $(\prec_1) \equiv_{A,B} (\prec_2)$ if and only if $N(\prec_1) \cap T_{A,B} = N(\prec_2) \cap T_{A,B}$. Equivalently, $(\prec_1)\equiv_{A,B} (\prec_2)$ if and only if $(\prec_1)$ and $(\prec_2)$ agree on the ordering of $\{A,\ldots, B\}$. Observe that $\equiv_{A,B}$ is cofinite, since $|L/{\equiv}_{A,B}| = (B-A+1)!$.

\begin{theorem}
$\WO(\Tot)$ and $\WO(\TTot_n)$ are profinite lattices.
\end{theorem}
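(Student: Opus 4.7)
The plan is to show that the family $\{\equiv_{A,B}\}_{A \leq B}$ of congruences already defined in the excerpt detects equality on $\WO(\Tot)$, and then deduce the result for $\WO(\TTot_n)$ by invoking \Cref{lem:sublatticeprofinite}.

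First I would verify that each $\equiv_{A,B}$ really is a complete lattice congruence on $\WO(\Tot)$. The cleanest route is to use \Cref{prop:congruence}(b): exhibit a complete lattice homomorphism $\rho_{A,B} : \WO(\Tot) \to \WO(S_{B-A+1})$ whose induced equivalence relation is $\equiv_{A,B}$. The map $\rho_{A,B}$ is ``restriction to $\{A,A+1,\ldots,B\}$'': send $(\prec)$ to the unique permutation of this interval whose inversion set is $N(\prec)\cap T_{A,B}$. That $\rho_{A,B}$ preserves all joins follows from the closure description in \Cref{prop:latticeSinfty} combined with \cite[Theorem 10-3.25]{Reading2016b}: the join in $\WO(\Tot)$ has inversion set $\overline{\bigcup_i N(\prec_i)}$, and for any pair $(a,b)\in T_{A,B}$ a witnessing chain $a=b_0<b_1<\cdots <b_k=b$ automatically has all $b_j$ in $[A,B]$, so the intersection with $T_{A,B}$ coincides with the closure computed inside $T_{A,B}$. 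Meets are handled dually (using the interior operator).

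Second, $\equiv_{A,B}$ is cofinite, since the quotient embeds into $S_{B-A+1}$ and thus has size at most $(B-A+1)!$.

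Third, the family detects equality: if $(\prec_1)\neq (\prec_2)$, then $N(\prec_1)\neq N(\prec_2)$, so there is a pair $(a,b)$ with $a<b$ lying in the symmetric difference. Taking $A=a$ and $B=b$ gives $(a,b)\in N(\prec_1)\cap T_{A,B}$ but $(a,b)\notin N(\prec_2)\cap T_{A,B}$ (or vice versa), so $(\prec_1)\not\equiv_{A,B}(\prec_2)$. This establishes that $\WO(\Tot)$ is profinite.

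Finally, by \Cref{thm:latticetSn}, $\WO(\TTot_n)$ is a complete sublattice of $\WO(\Tot)$, so \Cref{lem:sublatticeprofinite} immediately yields that $\WO(\TTot_n)$ is also profinite. The main obstacle in this argument is the verification that $\rho_{A,B}$ preserves arbitrary joins (and meets); once the chain-in-$[A,B]$ observation is in hand, everything else is bookkeeping. No new ideas seem needed beyond the closure formula for joins that is already in \Cref{prop:latticeSinfty}.
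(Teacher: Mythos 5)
Your proposal is correct and follows the same approach as the paper: the paper also uses the family $\{\equiv_{A,B}\}$, shows it detects equality by the same argument (finding a pair $(a,b)$ on which the two orders disagree and setting $A,B$ to be its endpoints), and deduces profiniteness of $\WO(\TTot_n)$ via \Cref{lem:sublatticeprofinite} and \Cref{thm:latticetSn}. The one place you go beyond the paper is in explicitly verifying that $\equiv_{A,B}$ is a \emph{complete} lattice congruence by exhibiting the restriction map $\rho_{A,B}$ as a complete lattice homomorphism; the paper asserts this implicitly when it introduces $\equiv_{A,B}$ as ``a congruence,'' and your chain-in-$[A,B]$ observation (that a witnessing chain for $(a,b)\in T_{A,B}$ in the transitive closure stays inside $[A,B]$, so restriction commutes with closure and dually with interior) is exactly the right justification.
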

\begin{proof}
We will show $\WO(\Tot)$ is a profinite lattice; by \Cref{lem:sublatticeprofinite,thm:latticetSn}, it will follow that $\WO(\TTot_n)$ is a profinite lattice as well. We claim that $\{\equiv_{A,B}\}_{A<B}$ detects equality. Let $(\prec_1),(\prec_2)$ be distinct total orders of the integers. Then there must be integers $a,b\in \ZZ$ so that $a \prec_1 b$ and $b \prec_2 a$. Write $A=\min\{a,b\}$ and $B=\max\{a,b\}$. Then one of the sets $N(\prec_1)\cap T_{A,B},~ N(\prec_2)\cap T_{A,B}$ contains the pair $(A,B)$ and the other does not. Hence $(\prec_1)\not\equiv_{A,B} (\prec_2)$.
\end{proof}

\subsection{Compact elements in profinite lattices}
In this subsection, we will study the notion of \emph{compactness} in a profinite lattice.
\begin{definition}
Let $L$ be a complete lattice. An element $x\in L$ is \newword{compact} if, for any subset $X\subseteq L$ so that $x\leq \bigvee X$, there is some finite subset $X'\subseteq X$ so that $x\leq \bigvee X'$. If every element of $L$ is the join of a set of compact elements, then $L$ is an \newword{algebraic lattice}.
\end{definition}

Recall from \Cref{prop:congruence} that each complete lattice congruence $\equiv$ on a complete lattice $L$ comes with order-preserving maps $\pi^\downarrow_\equiv, \pi^\uparrow_\equiv : L\to L$ sending $x$ to the minimal (respectively, maximal) element of the equivalence class $[x]_\equiv$.

\begin{lemma}\label{lem:cofinitecovers}
    Let $L$ be a complete lattice and $\equiv$ a cofinite congruence on $L$. If $x$ is any element of the quotient lattice $L/{\equiv}$, then $\pi^{\downarrow}_\equiv(x)$ is widely generated. Furthermore, the lower covers of $\pi^{\downarrow}_\equiv(x)$ are exactly the elements $\{ \pi^{\uparrow}_\equiv(y) \mid y \lessdot x \}$.
\end{lemma}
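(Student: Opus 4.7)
The plan is to prove the cover-set characterization first; wide generation will follow as a corollary. I identify the quotient $L/{\equiv}$ with the set $\pi^{\downarrow}(L)$ of class minima, so the quotient order agrees with the ambient order in $L$, and for $y \in L/{\equiv}$ one has $\pi^{\downarrow}(y) = y$ (and elements of the quotient can be compared to arbitrary elements of $L$ via their class minima).

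For the inclusion $\{\pi^{\uparrow}(y) : y \lessdot x\} \subseteq \{\text{lower covers of } \pi^{\downarrow}(x)\}$, fix a cover $y \lessdot x$ in $L/{\equiv}$. The key preliminary step is that $\pi^{\uparrow}(y) \leq \pi^{\downarrow}(x)$ in $L$. By the congruence property applied to $\pi^{\uparrow}(y) \equiv y$ and $\pi^{\downarrow}(x) \equiv x$, the join $\pi^{\uparrow}(y) \vee \pi^{\downarrow}(x)$ lies in the class $[y \vee x]_\equiv = [x]_\equiv$; combining this with $\pi^{\downarrow}(x)$ being the minimum of $[x]_\equiv$ and exploiting the cover $y \lessdot x$ should pin down this join as $\pi^{\downarrow}(x)$ itself. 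Granting $\pi^{\uparrow}(y) \leq \pi^{\downarrow}(x)$, suppose $\pi^{\uparrow}(y) \leq w < \pi^{\downarrow}(x)$. Applying the order-preserving map $\pi^{\downarrow}$ gives $y = \pi^{\downarrow}(\pi^{\uparrow}(y)) \leq \pi^{\downarrow}(w) \leq w < \pi^{\downarrow}(x) = x$, so $y \leq [w] < x$ in the quotient; the cover $y \lessdot x$ forces $[w] = y$, whence $w \leq \pi^{\uparrow}(y)$ and thus $w = \pi^{\uparrow}(y)$.

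For the reverse inclusion, any lower cover $w \lessdot \pi^{\downarrow}(x)$ satisfies $\pi^{\downarrow}(w) \leq w < \pi^{\downarrow}(x)$, so $[w] < x$ in the finite quotient $L/{\equiv}$. Finiteness lets me pick $y \lessdot x$ in the quotient with $[w] \leq y$; then $w \leq \pi^{\uparrow}([w]) \leq \pi^{\uparrow}(y) \leq \pi^{\downarrow}(x)$, and since $w$ is a lower cover, $w = \pi^{\uparrow}(y)$. Wide generation then follows immediately: given $z < \pi^{\downarrow}(x)$, we have $[z] < x$ in $L/{\equiv}$ (again because $\pi^{\downarrow}(z) \leq z < \pi^{\downarrow}(x)$); finiteness of $L/{\equiv}$ produces a cover $y \lessdot x$ with $[z] \leq y$, and then $z \leq \pi^{\uparrow}([z]) \leq \pi^{\uparrow}(y)$, which by the already-established characterization is a lower cover of $\pi^{\downarrow}(x)$.

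The delicate step, and the one I expect to be the main obstacle, is the inequality $\pi^{\uparrow}(y) \leq \pi^{\downarrow}(x)$ whenever $y \lessdot x$ in $L/{\equiv}$: this is the only substantive lattice-theoretic input, essentially forcing the equivalence-class intervals $[\pi^{\downarrow}(y), \pi^{\uparrow}(y)]$ and $[\pi^{\downarrow}(x), \pi^{\uparrow}(x)]$ to sit ``in order'' in $L$ under a cover relation. Everything else in the argument is formal manipulation of the maps $\pi^{\uparrow}$, $\pi^{\downarrow}$, the congruence property, and finiteness of $L/{\equiv}$.
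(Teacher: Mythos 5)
The step you flag as ``delicate'' --- that $\pi^{\uparrow}_\equiv(y) \leq \pi^{\downarrow}_\equiv(x)$ whenever $y \lessdot x$ in $L/{\equiv}$ --- is in fact false, and because every part of your argument routes through it (both containments of the cover-set description and the deduction of wide generation), the proposal does not go through. For a counterexample take $L$ to be the four-element Boolean lattice $\{0,a,b,1\}$ with $a,b$ incomparable, and let $\equiv$ be the complete lattice congruence with classes $\{0,a\}$ and $\{b,1\}$; writing $\bar 0, \bar 1$ for the two quotient classes, one has $\bar 0 \lessdot \bar 1$, but $\pi^{\uparrow}_\equiv(\bar 0) = a$ and $\pi^{\downarrow}_\equiv(\bar 1) = b$ are incomparable. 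Your suggested route to the inequality also fails here: $\pi^{\uparrow}_\equiv(\bar 0)\vee\pi^{\downarrow}_\equiv(\bar 1) = a\vee b = 1$, which does land in $[\bar 1]_\equiv$ as you anticipated, but is the maximum of that class rather than the minimum $b$, so nothing is pinned down. Worse, the same example shows the ``Furthermore'' clause of the lemma itself overclaims: the unique lower cover of $b = \pi^{\downarrow}_\equiv(\bar 1)$ in $L$ is $0$, not $a = \pi^{\uparrow}_\equiv(\bar 0)$.

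The paper's own proof is structured quite differently and avoids this trap. It proves wide generation first and directly, via Zorn's lemma applied to $\{y' \in L : y \leq y' < \pi^{\downarrow}_\equiv(x)\}$: the join of any chain in that set maps, under the quotient map $\eta$, to a join of a finite chain strictly below $x$, hence stays strictly below $\pi^{\downarrow}_\equiv(x)$. For the cover statement, the paper only establishes (and only ever uses, e.g.\ in \Cref{prop:compactprofinite}) the weaker correct fact that $\eta$ restricts to a bijection from lower covers of $\pi^{\downarrow}_\equiv(x)$ onto lower covers of $x$ in $L/{\equiv}$ --- not the explicit $\pi^{\uparrow}_\equiv$ formula. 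Your plan of deriving wide generation as a corollary of the cover-set characterization cannot be salvaged even once the characterization is corrected, since you need the false inequality both to locate a lower cover above a given $z < \pi^{\downarrow}_\equiv(x)$ and to certify that $\pi^{\uparrow}_\equiv(y)$ is below $\pi^{\downarrow}_\equiv(x)$ at all; the paper's ordering of the two claims is the right one.
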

\begin{proof}
    Assume $\equiv$ is a cofinite congruence on $L$ and $x\in L/{\equiv}$. Take some $y<\pi^{\downarrow}_\equiv(x)$ and consider the set $Y=\{y' \in L \mid y\leq y' < \pi^{\downarrow}_\equiv(x)\}$. We will show that $Y$ satisfies the hypothesis of Zorn's lemma, so that a maximal element of $Y$ gives a lower cover $y'\lessdot \pi^{\downarrow}_\equiv(x)$ with $y\leq y'$. Indeed, consider a totally ordered subset $\cC\subseteq Y$. Then we claim $\bigvee \cC < \pi^{\downarrow}_\equiv(x)$. To see this, let $\eta: L\to L/{\equiv}$ be the quotient map. Then $\eta(\bigvee \cC) = \bigvee \eta(\cC) $. Note that $\eta(\cC)$ is a chain in the finite lattice $L/{\equiv}$, so $\bigvee \eta(\cC)$ is the maximal element of $\eta(\cC)$. In particular, $\bigvee \eta(\cC) < x$, which implies that $\bigvee \cC < \pi^\downarrow_\equiv(x)$. Hence $\pi^{\downarrow}_\equiv(x)$ is widely generated. Furthermore, the lower covers of $\pi^{\downarrow}_\equiv(x)$ are in bijection with the lower covers of $x$ in $L/{\equiv}$, because if $y\lessdot \pi^{\downarrow}_\equiv(x)$ then $\eta(y)\lessdot x$, and if $y_1,y_2\lessdot \pi^{\downarrow}_\equiv(x)$ and $\eta(y_1)=\eta(y_2)$, then $y_1\vee y_2 < \pi^{\downarrow}_\equiv(x)$, so $y_1=y_2$.
\end{proof}

We say that an element $x$ of a complete lattice $L$ is \newword{finitely generated} if there is some cofinite congruence $\equiv$ on $L$ so that $x = \pi^\downarrow_\equiv(x)$. 

\begin{proposition}\label{prop:compactprofinite}
    The following are equivalent, for an element $x$ of a profinite lattice $L$:
    \begin{itemize}
        \item[(a)] $x$ is compact;
        \item[(b)] $x$ is finitely generated;
        \item[(c)] $x$ is widely generated and $x$ covers finitely many elements;
        \item[(d)] $x$ is the join of finitely many complete join-irreducibles.
    \end{itemize}
\end{proposition}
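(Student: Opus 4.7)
My plan is to establish the cycle (a)\,$\Rightarrow$\,(b)\,$\Rightarrow$\,(c)\,$\Rightarrow$\,(d)\,$\Rightarrow$\,(b) together with the short (b)\,$\Rightarrow$\,(a).

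For (a)\,$\Rightarrow$\,(b) I would exploit profiniteness to approximate $x$ from below by elements of the form $\pi^{\downarrow}_\equiv(x)$. For each $y<x$, choose a cofinite congruence $\equiv_y$ with $x\not\equiv_y y$; since $[x]_{\equiv_y}$ is the interval $[\pi^{\downarrow}_{\equiv_y}(x),\pi^{\uparrow}_{\equiv_y}(x)]$ and $y\leq x\leq \pi^{\uparrow}_{\equiv_y}(x)$, we cannot have $y\geq \pi^{\downarrow}_{\equiv_y}(x)$. This forces $\bigvee_{y<x}\pi^{\downarrow}_{\equiv_y}(x)=x$: if the join were some $y^*<x$, then $\pi^{\downarrow}_{\equiv_{y^*}}(x)\leq y^*$ would contradict $y^*\not\geq \pi^{\downarrow}_{\equiv_{y^*}}(x)$. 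Compactness of $x$ then yields a finite subfamily $y_1,\dots,y_k$ with $\bigvee_i \pi^{\downarrow}_{\equiv_{y_i}}(x)=x$, and any common refinement $\equiv$ (\Cref{lem:refinementcof}) satisfies $\pi^{\downarrow}_\equiv(x)\geq \pi^{\downarrow}_{\equiv_{y_i}}(x)$ for each $i$, so $\pi^{\downarrow}_\equiv(x)\geq x$ and hence equals $x$.

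The implication (b)\,$\Rightarrow$\,(c) is immediate from \Cref{lem:cofinitecovers}. For (c)\,$\Rightarrow$\,(d), given lower covers $y_1,\dots,y_k$ of $x$, I would construct each required $j_i\in \JIrrc(L)$ as follows. Pick a cofinite $\equiv$ (depending on $i$) with $x\not\equiv y_i$; then $[y_i]<[x]$ in $L/{\equiv}$, so there is a join-irreducible $j'$ of the finite lattice $L/{\equiv}$ with $j'\leq [x]$ and $j'\not\leq [y_i]$. Setting $j_i:=\pi^{\downarrow}_\equiv(j')$, \Cref{lem:cofinitecovers} shows $j_i$ is widely generated with a unique lower cover (namely $\pi^{\uparrow}_\equiv$ of the unique lower cover of $j'$), so $j_i\in \JIrrc(L)$ by \Cref{lem:JIwide}. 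The inequalities $j_i\leq x$ and $j_i\not\leq y_i$ follow from $[j_i]=j'\leq [x]$ and $j'\not\leq [y_i]$. Wide generation of $x$ then forces $\bigvee_i j_i=x$: otherwise this join would lie below some $y_i$, contradicting $j_i\not\leq y_i$.

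For (d)\,$\Rightarrow$\,(b), I would first observe that each $j\in \JIrrc(L)$ is finitely generated. Picking $\equiv$ with $j\not\equiv j_*$, we must have $\pi^{\downarrow}_\equiv(j)=j$, since otherwise wide generation pushes $\pi^{\downarrow}_\equiv(j)$ below $j_*$, collapsing $[j]$ with $[j_*]$. A finite join of finitely generated elements is then finitely generated by taking a common refinement and noting that refinements preserve each equation $\pi^{\downarrow}_{\equiv_i}(j_i)=j_i$. Finally, (b)\,$\Rightarrow$\,(a) is the quick observation that in the finite quotient $L/{\equiv}$ the join $\bigvee_{x'\in X}[x']$ collapses to finitely many classes represented by some $x_1,\dots,x_m\in X$, whence $x=\pi^{\downarrow}_\equiv(x)\leq \pi^{\downarrow}_\equiv(x_1\vee\cdots\vee x_m)\leq x_1\vee\cdots\vee x_m$. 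The main obstacle is (c)\,$\Rightarrow$\,(d), where one must produce honest cJIs of $L$ rather than merely of its quotients; the key mechanism is that \Cref{lem:cofinitecovers} and \Cref{lem:JIwide} together let us transport any JI from a cofinite quotient back into $\JIrrc(L)$.
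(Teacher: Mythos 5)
Your proof is correct and rests on the same core machinery as the paper's (the transport of join-irreducibles across cofinite quotients via \Cref{lem:cofinitecovers} and \Cref{lem:JIwide}), with two tactical variations worth noting. In (c)$\Rightarrow$(d), you pick one congruence $\equiv$ per lower cover $y_i$ and extract a single join-irreducible $j'\le[x]$, $j'\not\le[y_i]$ from each quotient, whereas the paper uses \Cref{lem:cofinitelist} to find one congruence separating $x$ from all its covers simultaneously and then decomposes $\eta(x)$ into JIs in that one quotient; your version has the small advantage of producing exactly one cJI per lower cover, which matches the canonical join representation, while the paper's is slightly shorter to state. You also close the cycle through (d)$\Rightarrow$(b)$\Rightarrow$(a) rather than (d)$\Rightarrow$(a) directly; both arguments hinge on the same observation that a cJI $j$ satisfies $\pi^\downarrow_\equiv(j)=j$ once $j\not\equiv j_*$, so this is a cosmetic rearrangement. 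One pedantic remark on your (a)$\Rightarrow$(b): if $x=\hat0$ the index set $\{y:y<x\}$ is empty, so the displayed join is vacuously $\hat0$ and you should note that $\hat0$ is finitely generated by any cofinite congruence; the paper sidesteps this by joining over a fixed detecting family of congruences rather than a family indexed by $y<x$.
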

\begin{proof}
    Let $\{\equiv_i\}_{i\in I}$ be a collection of cofinite congruences on $L$ which detects equality. 
    
    (a)$\implies$(b): Observe that $x = \bigvee_{i\in I} \pi^{\downarrow}_{\equiv_i}(x)$, since otherwise there is an $i\in I$ so that $x \not\equiv_i \bigvee_{i\in I} \pi^{\downarrow}_{\equiv_i}(x)$, which contradicts the fact that $x\geq \bigvee_{i\in I} \pi^{\downarrow}_{\equiv_i}(x)$ and $x\equiv_i \pi^{\downarrow}_{\equiv_i}(x)$. Since $x$ is compact, this means there is a finite subset $I'\subseteq I$ so that $x = \bigvee_{i\in I'} \pi^{\downarrow}_{\equiv_i}(x)$.  Let $\equiv$ be a cofinite common refinement of $\{\equiv_i\}_{i\in I'}$. Then $x=\pi^{\downarrow}_\equiv(x)$.

    (b)$\implies$(c): This is a direct consequence of \Cref{lem:cofinitecovers}, since the lower covers of $x$ are in bijection with the lower covers of $[x]_\equiv$ in $L/{\equiv}$, which is finite.

    (c)$\implies$(d): Assume $x$ is widely generated and $y_1,\ldots,y_k\lessdot x$ are the lower covers of $x$. By \Cref{lem:cofinitelist}, there is a cofinite congruence $\equiv$ so that $y_1,\ldots,y_k\not\equiv x$. Write $\eta : L\to L/{\equiv}$ for the quotient map; our hypothesis on $\equiv$ is equivalent to $\eta(x)> \eta(y_1),\ldots,\eta(y_k)$. Then $\eta(x)$ is a join of join-irreducibles $j_1,\ldots, j_r \in L/{\equiv}$. Each $\pi^{\downarrow}_\equiv(j_i)$ is a complete join-irreducible, since by \Cref{lem:cofinitecovers} it is widely generated and has a unique lower cover, which implies it is a \JI by \Cref{lem:JIwide}. We claim $x = \pi^{\downarrow}_\equiv(j_1) \vee \cdots \vee \pi^{\downarrow}_\equiv(j_r)$. 
    Indeed, if not then $\pi^{\downarrow}_\equiv(j_1) \vee \cdots \vee \pi^{\downarrow}_\equiv(j_r) < x$, so by using the fact that $x$ is widely generated, we would have that $\pi^{\downarrow}_\equiv(j_1) \vee \cdots \vee \pi^{\downarrow}_\equiv(j_r) \leq y_i$ for some $i$. But then applying $\eta$ we get $j_1\vee \cdots \vee j_r \leq \eta(y_i)$, which contradicts the fact that $\eta(x)> \eta(y_i)$. The claim follows.

    (d)$\implies$(a): Compactness is preserved under taking finitely many joins, so it is enough to check that \JIs are compact. Assume $j$ is a \JI in $L$ covering the element $j_*$. Let $\equiv$ be a cofinite congruence with $j_*\not\equiv j$, and write $\eta:L\to L/{\equiv}$ for the lattice quotient. Given a collection $X\subseteq L$ so that $j\leq \bigvee X$, there is some finite subset $X'\subseteq X$ so that $\eta(j) \leq \bigvee \eta(X')$. Then also $\pi^{\downarrow}_\equiv(j) \leq  \pi^{\downarrow}_\equiv(\bigvee \eta(X')) = \bigvee \pi^{\downarrow}_\equiv(\eta(X')) = \bigvee X'$\footnote{Note that $\pi^{\downarrow}_\equiv$ does preserve joins, but does not generally preserve meets.}. Because $j$ is a \JI, by \Cref{lem:JIwide} if $\pi^{\downarrow}_\equiv(j)$ were strictly less than $j$ then we would have $\pi^{\downarrow}_\equiv(j)\leq j_*$, contradicting the fact that $j_*\not\equiv j$.  So we must have $j=\pi^{\downarrow}_\equiv(j)$ and we are done. 
    
\end{proof}

\begin{corollary}\label{cor:algebraic}
    Every profinite lattice is algebraic. In particular, $\WO(\Tot)$ and $\WO(\TTot_n)$ are algebraic lattices.
\end{corollary}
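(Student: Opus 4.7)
The plan is to prove the first statement directly, since the ``in particular'' clause then follows from the already-established fact that $\WO(\Tot)$ and $\WO(\TTot_n)$ are profinite lattices. So let $L$ be a profinite lattice and let $x \in L$. I want to exhibit $x$ as a join of compact elements.

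The natural candidates are the finitely generated elements beneath $x$. Specifically, for each cofinite congruence $\equiv$ on $L$, set $x_\equiv \coloneqq \pi^{\downarrow}_{\equiv}(x)$. By definition $x_\equiv$ is the minimum of its own $\equiv$-class, so $\pi^{\downarrow}_\equiv(x_\equiv) = x_\equiv$, meaning $x_\equiv$ is finitely generated. By \Cref{prop:compactprofinite}, $x_\equiv$ is therefore compact. Let $S \coloneqq \{x_\equiv \mid \equiv \text{ is a cofinite congruence on } L\}$.

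I would then show $\bigvee S = x$. The inequality $\bigvee S \leq x$ is clear since $x_\equiv \leq x$ for every $\equiv$. For the reverse inequality, suppose for contradiction that $\bigvee S < x$. Since $L$ is profinite, by \Cref{def:profinite} there exists a cofinite congruence $\equiv$ with $\bigvee S \not\equiv x$. But $x_\equiv \in S$ gives $x_\equiv \leq \bigvee S \leq x$, and since equivalence classes of $\equiv$ are intervals (\Cref{prop:congruence}(c)) with $x_\equiv \equiv x$, it would follow that $\bigvee S \equiv x$, a contradiction. Hence $x$ is a join of compact elements, and $L$ is algebraic.

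The main obstacle is essentially conceptual rather than technical: one must be comfortable assembling the pieces that were set up in \Cref{prop:compactprofinite} and \Cref{prop:congruence}(c), noting in particular that each $\pi^{\downarrow}_\equiv(x)$ is automatically fixed by $\pi^{\downarrow}_\equiv$ and hence qualifies as finitely generated. The ``in particular'' assertion is then immediate: we already verified profiniteness of $\WO(\Tot)$ and $\WO(\TTot_n)$ via the family $\{\equiv_{A,B}\}_{A<B}$, so both inherit the algebraic property.
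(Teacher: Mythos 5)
Your argument is correct and follows essentially the same route as the paper: both write $x$ as the join of the finitely generated elements $\{\pi^{\downarrow}_\equiv(x) \mid \equiv \text{ cofinite}\}$ and invoke \Cref{prop:compactprofinite} to conclude compactness. You spell out the justification that $\bigvee S = x$ (which the paper leaves implicit, having made the same observation inside the proof of \Cref{prop:compactprofinite}), but the idea and decomposition are identical.
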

\begin{proof}
    In a profinite lattice, any element $x$ is the join of the finitely generated elements $\{ \pi^{\downarrow}_\equiv(x) \mid {\equiv} \text{ is a cofinite congruence on } L\}$. By \Cref{prop:compactprofinite}, finitely generated elements are compact, so $x$ is the join of compact elements.
\end{proof}

\begin{theorem}\label{thm:totcompact}
    The following are equivalent, for a total order $(\prec)\in \WO(\Tot)$:
    \begin{itemize}
        \item[(a)] $(\prec)$ is compact;
        \item[(b)] $(\prec)$ is finitely generated;
        \item[(c)] $(\prec)$ is widely generated and covers finitely many total orders;
        \item[(d)] $(\prec)$ is the join of finitely many complete join-irreducibles;
        \item[(e)] The inversion set $N(\prec)$ is finite;
        \item[(f)] There is some $\pi \in S_\infty$ so that $(\prec) = (\prec_\pi)$.
    \end{itemize}
\end{theorem}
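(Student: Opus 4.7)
The plan is to leverage \Cref{prop:compactprofinite} applied to the profinite lattice $\WO(\Tot)$, which immediately yields the equivalences (a)$\iff$(b)$\iff$(c)$\iff$(d). Having these for free, it remains only to fold (e) and (f) into the equivalence class, which I will do by proving the cycle (d)$\implies$(e)$\iff$(f)$\implies$(d).

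The first two arrows are essentially routine. The implication (f)$\implies$(e) is definitional: $N(\prec_\pi)=N(\pi)$ is automatically finite when $\pi\in S_\infty$. For (d)$\implies$(e), I will observe that the complete join-irreducible $j_\aa$ associated to an arc $\aa=(a,b,L,R)$ has inversion set contained in the finite ``rectangle'' $\{(x,y):a\le x<y\le b\}$; since the transitive closure operator $\overline{(\cdot)}$ used to compute joins in \Cref{prop:latticeSinfty} never introduces integers beyond those already present, a finite union of such sets, followed by closure, stays finite. The slightly more computational arrow is (e)$\implies$(f). I will construct $\pi$ explicitly: let $[A,B]$ be a finite interval containing every integer that appears in some pair of $N(\prec)$, list the elements $A,A+1,\ldots,B$ in the order induced by $\prec$ as $a_A\prec a_{A+1}\prec\cdots\prec a_B$, and define $\pi(i)=a_i$ on $[A,B]$ and $\pi(i)=i$ outside. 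The verification that $(\prec_\pi)=(\prec)$ is a case-split on whether $a,b\in[A,B]$; the key input is that any pair involving an integer outside $[A,B]$ cannot be an inversion of $\prec$, because such an integer is outside the support of $N(\prec)$.

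For the closing arrow (f)$\implies$(d), I will reduce to the canonical join representation in a finite symmetric group. Given $\pi\in S_\infty$ fixing everything outside $[A,B]$, relabeling lets me view $\pi$ as an element of $S_{B-A+1}$, and \Cref{prop:canonicaljoinSn} expresses it as a finite join of its join-irreducibles, each of the form $j_\aa$ for an arc $\aa$. The embedding sending a finite permutation $\sigma$ to $(\prec_\sigma)$ preserves these joins because, in both lattices, joins are computed by the same transitive closure operator on inversion sets and the closure introduces no new integers. Hence the same expression witnesses $(\prec)$ as a finite join of complete join-irreducibles in $\WO(\Tot)$, establishing (d). The main obstacle in the overall argument is making (e)$\implies$(f) and the sublattice compatibility in (f)$\implies$(d) precise; once the closure description of joins is in hand, both reduce to bookkeeping.
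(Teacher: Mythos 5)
Your proposal is correct and takes essentially the same route as the paper. The paper also gets (a)--(d) directly from \Cref{prop:compactprofinite}, calls (e)$\iff$(f) straightforward, and links (d) to (e) via the observation that every complete join-irreducible $j_\aa$ has finite inversion set; your cycle (d)$\Rightarrow$(e)$\Rightarrow$(f)$\Rightarrow$(d) merely spells out that same observation (including the point that transitive closure introduces no new integers) and, for (f)$\Rightarrow$(d), substitutes \Cref{prop:canonicaljoinSn} plus the sublattice embedding where the paper would implicitly invoke \Cref{thm:canonicaljoinSinfty} with a finite lower-arc set.
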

\begin{proof}
    The equivalence of (a-d) is \Cref{prop:compactprofinite}. The equivalence of (e) and (f) is straightforward. The equivalence of (d) and (e) follows since the description of \JIs in \Cref{subsec:canjoinSinfty} implies that every completely join irreducible total order has a finite inversion set.
\end{proof}

\begin{theorem}\label{thm:TITOcompact}
    The following are equivalent, for a TITO $(\prec) \in \WO(\TTot_n)$:
    \begin{itemize}
        \item[(a)] $(\prec)$ is compact;
        \item[(b)] $(\prec)$ is finitely generated;
        \item[(c)] $(\prec)$ is widely generated;
        \item[(d)] $(\prec)$ is the join of finitely many complete join-irreducibles;
        \item[(e)] There is a finite subset $R \subseteq \tT$ so that $\tN(\prec) = \overline{R}$;
        \item[(f)] $(\prec)$ does not have two consecutive waxing blocks.
    \end{itemize}
\end{theorem}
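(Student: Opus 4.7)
The plan is to prove the equivalences (a)--(e) by lattice-theoretic arguments using that $\WO(\TTot_n)$ is profinite, and to establish (e) $\Leftrightarrow$ (f) by a combinatorial analysis of the block structure.

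The equivalences (a) $\Leftrightarrow$ (b) $\Leftrightarrow$ (d) are immediate from \Cref{prop:compactprofinite}. To incorporate (c) without the auxiliary ``finitely many lower covers'' hypothesis present there, I will verify that a widely generated TITO in $\WO(\TTot_n)$ automatically covers finitely many elements: by \Cref{thm:wallscovertSn}, lower covers correspond to lower walls; the real lower walls of a widely generated TITO correspond to wrapped arcs in $\tLowerArcs(\prec)$ via \Cref{thm:noncrossingarcsTITO}, and any cyclic non-crossing collection contains at most $n$ arcs since two wrapped arcs sharing an initial residue class mod $n$ would cross; imaginary lower walls correspond to size-$1$ waning blocks, of which there are at most $n$ by \Cref{lem:fundTITO}.

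For (d) $\Rightarrow$ (e), I inspect the four explicit descriptions of $j_{\taa}$ following \Cref{thm:canonicaljointSn} and verify that each $\tN(j_{\taa})$ is the closure of a finite set: the key observation is that a single imaginary reflection index $\rind{x, x+n}$ generates every $\rind{x, x+kn}$ under closure, since $\rind{x, x+n} = \rind{x+jn, x+(j+1)n}$ as reflection indices. A finite join then has inversion set equal to the closure of a finite union of such finitely generated sets, hence itself the closure of a finite set. Conversely, for (e) $\Rightarrow$ (a) I check compactness directly: given $\tN(\prec) = \overline{R}$ with $R$ finite and $(\prec) \leq \bigvee X$, each $\rind{a,b} \in R$ lies in $\overline{\bigcup_{x \in X} \tN(x)}$ and hence is witnessed by a finite chain $a = c_0 < \cdots < c_m = b$ of reflection indices in the inversion sets of finitely many $x \in X$, yielding a finite subfamily $X' \subseteq X$ with $(\prec) \leq \bigvee X'$.

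For (e) $\Leftrightarrow$ (f) I argue both directions. For (f) $\Rightarrow$ (e), I will exhibit a finite generating set for $\tN(\prec)$ under closure, combining reflection indices of bounded span (the bound depending on the block sizes of $(\prec)$) with imaginary generators arising from size-$1$ waning blocks; the hypothesis of no two consecutive waxing blocks ensures that every long-span inversion of $(\prec)$ can be decomposed into shorter ones by chaining through the intervening waning blocks. For the contrapositive of (e) $\Rightarrow$ (f), suppose $(\prec)$ has two consecutive waxing blocks $I_1 \prec I_2$ with residue classes $\rho_1$ of $I_1$ and $\rho_2$ of $I_2$, represented by $x_0, y_0$ with $y_0 < x_0$. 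The inversions $\rind{y_0, x_0 + mn}$ for $m \geq 0$ form infinitely many distinct reflection indices in $\tN(\prec)$, and no finite $R \subseteq \tT$ can generate them all under closure: the waxing condition on both $I_1$ and $I_2$ prevents any intra-$I_1$ or intra-$I_2$ adjacent pair from being an inversion, so no chain can pass through $I_1$ or $I_2$ in a way that assembles $\rind{y_0, x_0 + mn}$ from generators of smaller span, forcing each $\rind{y_0, x_0 + mn}$ to appear in $R$ itself. Making this chain-obstruction argument rigorous across all possible chain structures is the main obstacle in the proof.
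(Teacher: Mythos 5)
Your treatment of the equivalences (a)--(e) essentially reproduces the paper's argument. The reduction of (a), (b), (d) and the bridging of (c) via ``finitely many lower covers'' matches the paper, though your bound (splitting into real and imaginary lower walls) is a slightly roundabout version of the paper's simpler observation that a block of size $k$ carries at most $k$ lower walls, so a TITO has at most $n$ lower walls in total. Your (d)~$\Rightarrow$~(e) via inspecting the explicit $j_{\taa}$ and your (e)~$\Rightarrow$~(a) via a direct compactness check also match the paper.

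The genuine gap is in (e)~$\Leftrightarrow$~(f), and it is the one you yourself flag. Your obstruction argument for (e)~$\Rightarrow$~(f) rests on the claim that ``the waxing condition on both $I_1$ and $I_2$ prevents any intra-$I_1$ or intra-$I_2$ adjacent pair from being an inversion,'' but this is false whenever a waxing block has size $\geq 2$: the waxing block with window $[2,1]$ has the adjacent pair $2 \precdot 1$ as a genuine inversion $\rind{1,2}$. Consequently chains \emph{can} pass through a waxing block, and the claim that each $\rind{y_0, x_0+mn}$ must itself lie in $R$ does not follow; one must rule out longer chains that traverse waxing blocks and possibly other blocks lying numerically between $y_0$ and $x_0 + mn$. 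That analysis can be salvaged but requires tracking the internal structure of a waxing block (via \Cref{lem:fundTITO}(d)) and arguing that any such chain eventually produces an uncrossable inversion. Your (f)~$\Rightarrow$~(e) direction is likewise only sketched. The paper sidesteps both difficulties by routing through (c) instead: for $\neg$(f)~$\Rightarrow$~$\neg$(c) it merges the two consecutive waxing blocks $[x_1,\ldots,x_j][y_1,\ldots,y_k]$ into the single window $[x_1,\ldots,x_j,y_1-Kn,\ldots,y_k-Kn]$ for $K \gg 0$, producing a strictly smaller TITO that still contains every lower wall of $(\prec)$, so $(\prec)$ is not widely generated; and for (f)~$\Rightarrow$~(b) it shows directly that $(\prec)$ is the bottom of its $\equiv_{A,B+n}$-class for a suitable window $[A,B]$, hence finitely generated. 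Both of those arguments avoid closure-chain bookkeeping entirely, which is why the paper chooses that route.
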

\begin{proof}
    The equivalence of (a-d) is \Cref{prop:compactprofinite}, except that we need to check that every widely generated TITO has finitely many lower covers. Indeed, by \Cref{thm:wallscovertSn}, the lower covers correspond to lower walls. A block of size $k$ contains at most $k$ lower walls, so any TITO has at most $n$ lower walls.

    (d)$\implies$(e): It is enough to show, for each wrapped arc $\taa = (a,b,L,R)$, that the \JI $j_\taa$ has the property that $\tN(j_\taa) = \overline{R}$ for a finite set $R\subseteq \tT$. Indeed, we claim that $\tN(j_\taa)$ is the closure of the finite set $\tA_0 = \tN(j_\taa)\cap \{\rind{x,y} \mid y-x \leq b-a \}$. One can check this directly using the list in \Cref{subsec:cyclicarcs}. Alternatively, note that $\rind{x,y}\in \overline{\tA}_0$ if and only if $(x,y)\in \overline{A}_0$, where $A_0\coloneqq \{ (x,y) \mid \rind{x,y}\in \tA_0 \}$. Furthermore, $A_0$ is exactly $\{(x,y)\mid x<y,~y-x\leq b-a \} \cap \bigcup_{k\in\ZZ} N(j_{a+kn,b+kn,L+kn,R+kn})$. It is quick to check that $N(j_{(a,b,L,R)}) = \overline{N(j_{(a,b,L,R)})\cap \{(x,y)\mid x<y,~y-x\leq b-a \}}$. Since $j_\taa$ is the join of $\{ j_{(a+kn,b+kn,L+kn,R+kn)} \mid k\in\ZZ\}$ in $\WO(\Tot)$, this finishes the claim. 

    (e)$\implies$(a): Let $(\prec)$ be a TITO and $R\subseteq \tT$ a finite set so that $\tN(\prec)=\overline{R}$. If $X$ is a subset of $\WO(\TTot_n)$ so that $(\prec) \leq \bigvee X$, then in particular $R \subseteq \overline{\bigcup_{x\in X}\tN(x) }$. Since $R$ is finite, there is a finite subset $X'\subseteq X$ so that $R\subseteq \overline{\bigcup_{x\in X}\tN(x)}$. But then also $\tN(\prec) = \overline{R} \subseteq \overline{\bigcup_{x\in X}\tN(x)} $, so we are done.

    (c)$\implies$(f): Assume $(\prec)$ has two consecutive waxing blocks. Say the windows for the two blocks are $[x_1,\ldots,x_j][y_1,\ldots, y_k]$. Pick a $K\gg 0$ large enough so that $\max\{x_1,\ldots,x_j\} > \min\{ y_1-Kn, \ldots, y_k-Kn\}$. Then replacing the two windows $[x_1,\ldots,x_j][y_1,\ldots, y_k]$ with the single window $[x_1,\ldots,x_j,y_1-Kn,\ldots,y_k-Kn]$ gives a new TITO $(\prec')$ so that $(\prec')<(\prec)$. Furthermore, every lower wall of $(\prec)$ is still an inversion of $(\prec')$, so by \Cref{thm:wallscovertSn} there is no lower cover of $(\prec)$ bigger than $(\prec')$. Hence $(\prec)$ is not widely generated.

    (f)$\implies$(b): Let $(\prec)$ be a TITO and fix a window notation for $(\prec)$. Pick $A,B\in \ZZ$ so that the interval $[A,B]_{<}$ includes all $n$ integers appearing in the windows of $(\prec)$. Then for each block (say, of size $k$), the interval $[A,B+n]_{<}$ includes $k+1$ consecutive integers from that block. Hence if we restrict $(\prec)$ to the integers in $[A,B+n]_{<}$, then using \Cref{lem:fundTITO} we can recover the divisions between waxing and waning blocks. The only ambiguity is when adjacent blocks are waxing or adjacent blocks are waning, in which case multiple TITOs give the same ordering of $[A,B+n]_<$. The unique minimal TITO inducing that order is $\pi^{\downarrow}_{\equiv_{A,B+n}}(\prec)$ (in particular, it exists). The unique TITO inducing that order which also minimizes the number of waxing blocks and maximizes the number of waning blocks is $(\prec)$. But then $(\prec)$ must be $\pi^{\downarrow}_{\equiv_{A,B+n}}(\prec)$, since if we fix a choice of waxing or waning for each integer $x$ and only consider TITOs respecting that choice, then going down in $\WO(\TTot_n)$ can only decrease the number of waxing blocks and increase the number of waning blocks.
\end{proof}
The equivalence between (e) and (f) (or rather, its analog for extended weak order), was first shown by Weijia Wang in \cite{Wang2019}.

\begin{aside}
\begin{remark}
There is a different approach to show that (b) and (e) are equivalent. Imagine we have an exhaustive filtration $\tT_1\subseteq \tT_2 \subseteq \tT_3 \subseteq \cdots \subseteq \tT$ so that each $\tT_i$ is finite and the map $(\prec) \mapsto \tN(\prec) \cap \tT_i$ is a complete lattice homomorphism onto its image (ordered by containment). Let $\equiv_i$ be the induced cofinite congruences. If it is also the case that $\overline{\tN(\prec)\cap \tT_i}$ is the inversion set of a TITO for all TITOs $(\prec)$, then it must be that $\tN(\pi^{\downarrow}_{\equiv_i}(\prec))$ is the closure of the finite set $\tN(\prec) \cap \tT_i$. Hence we deduce the equivalence of (b) and (e). Indeed, there is such a filtration if we take $\tT_i = \{ \rind{a,b} \mid b-a \leq i\}$. A related filtration is used in \cite{Barkley2023} to study extended weak order, and has useful analogs in other affine Coxeter groups.
\end{remark}
\end{aside}

\subsection{Semidistributivity in profinite lattices}

In this section we show that semidistributivity in profinite lattices behaves similarly to finite lattices. First, a lemma.

\begin{lemma}\label{lem:joinrepcharacter}
    Let $L$ be a complete lattice. Then $x\in L$ has a canonical join representation if and only if $x$ is widely generated and, for each $y\lessdot x$, the set $\{z \in L \mid z \vee y = x\}$ has a minimum\footnotemark element $j_y$. In that case, the canonical join representation is $x = \bigvee_{y\lessdot x} j_y$.
\end{lemma}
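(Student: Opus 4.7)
The plan is to prove the two directions separately, pinning down how the set $U$ of a canonical join representation is related to the ``labels'' $j_y$ attached to the lower covers of $x$.

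For the forward direction, suppose $x = \bigvee U$ is the canonical join representation. By \Cref{lem:canonicaljoiniswide}, $x$ is widely generated. Fix a lower cover $y\lessdot x$; the plan is to isolate a unique $j^* \in U$ with $j^*\not\leq y$ and then show it is the minimum of $\{z : z\vee y = x\}$. At least one such $j^*$ exists, or else $\bigvee U\leq y$. If two distinct candidates $j_1, j_2 \in U$ both failed to lie below $y$, then $V := (U\setminus\{j_1\})\cup\{y\}$ would satisfy $\bigvee V = x$ via $j_2\vee y = x$, and $U\ll V$ would then force $j_1$ below some other element of $U$, contradicting irredundance. For minimality, given any $z$ with $z\vee y = x$, apply $U\ll\{z,y\}$: since $j^*\not\leq y$, necessarily $j^*\leq z$.

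For the backward direction, set $U = \{j_y : y\lessdot x\}$. First I would show $\bigvee U = x$: if not, wide generation would produce a lower cover $y$ of $x$ with $\bigvee U\leq y$, forcing $j_y\leq y$, contrary to $j_y\vee y = x$. Next, for any $V$ with $\bigvee V = x$ and any $y\lessdot x$, some $v\in V$ fails to lie below $y$ (else $\bigvee V\leq y$), hence $v\vee y = x$, and minimality of $j_y$ gives $j_y\leq v$, proving $U\ll V$.

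The hard part will be irredundance of $U$. The key observation is that any two distinct lower covers $y_0, y_1\lessdot x$ are incomparable (else the larger one would sit strictly between the smaller one and $x$), so $y_0\vee y_1 = x$. This puts $y_0$ itself into $\{z : z\vee y_1 = x\}$, and minimality of $j_{y_1}$ then forces $j_{y_1}\leq y_0$. Consequently $\bigvee(U\setminus\{j_{y_0}\})\leq y_0 < x$ for every $y_0\lessdot x$, so every proper subset of $U$ joins strictly below $x$. This is the step I expect to be subtle: at first glance there is no reason why the label of one lower cover should sit below another lower cover, and the insight is to use $y_0$ itself as a witness for the minimality condition defining $j_{y_1}$.
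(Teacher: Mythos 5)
Your proposal is correct and follows essentially the same route as the paper: both directions rely on the bijection between elements of the canonical join representation and lower covers of $x$, with the backward direction using $j_{y_1}\leq y_2$ for distinct lower covers (established exactly as you do, by observing that distinct lower covers join to $x$) to get irredundance, and the $\ll$-minimality from the fact that any join representation of $x$ must contain an element not below $y$. The only cosmetic difference is that in the forward direction the paper establishes the bijection starting from $j\in U$ and finding the unique $y$ with $j\not\leq y$, while you fix $y$ first; these are the two sides of the same correspondence and both are airtight.
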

\footnotetext{A set $X$ has a \emph{minimum} if the greatest lower bound of $X$ is in $X$. For infinite lattices, this is stronger than $X$ having a unique minimal element.}
\begin{proof}
    First, if $x$ has a canonical join representation then $x$ is widely generated by \Cref{lem:canonicaljoiniswide}. If $x = \bigvee X$ is the canonical join representation and $j\in X$, then there is a unique $y\lessdot x$ so that $j\not\leq y$. This sets up a bijection between the elements of $X$ and the lower covers of $x$. Furthermore, if $z\in L$ is any element so that $z\vee y= x$, then $x=z \vee \bigvee_{\substack{j'\in X\\ j'\neq j}}j'$ so $j\leq z$ by minimality of the canonical join representation. 

    Conversely, assume $x$ is widely generated and for each lower cover $y\lessdot x$, there is a minimum element $j_y$ of $\{z\in L \mid z \vee y =x\}$. Then $\bigvee_{y\lessdot x} j_y =x$, or else we would contradict the fact that $x$ is widely generated. Note that for two distinct lower covers $y_1,y_2\lessdot x$, we have $j_{y_1}\leq y_2$. Hence the join representation $x=\bigvee_{y\lessdot x}j_y$ is irredundant. Now let $x= \bigvee V$ be any join representation and let $y\lessdot x$. There must be some element $v\in V$ so that $v\not\leq y$. But then $j_y \leq v$ by its definition. Hence $x=\bigvee_{y\lessdot x}j_y$ is the canonical join representation.
\end{proof}

Recall the definitions of join semidistributive and completely join semidistributive from \Cref{def:semidistributive}.

\begin{proposition}\label{prop:profinitesemidist}
    Let $L$ be a profinite lattice. Then the following are equivalent:
    \begin{itemize}
        \item[(a)] $L$ is join semidistributive;
        \item[(b)] $L$ is completely join semidistributive;
        \item[(c)] For every cofinite congruence $\equiv$ on $L$, the quotient lattice $L/{\equiv}$ is join semidistributive;
        \item[(d)] Every widely generated element of $L$ has a canonical join representation;
        \item[(e)] Every finitely generated element of $L$ has a canonical join representation.
    \end{itemize}
\end{proposition}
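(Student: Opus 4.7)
The plan is to establish the cycle (b)$\Rightarrow$(a)$\Rightarrow$(d)$\Rightarrow$(e)$\Rightarrow$(c)$\Rightarrow$(b), leveraging profiniteness to shuttle information between $L$ and its finite quotients. The two easy steps are (b)$\Rightarrow$(a) (restrict the family $X$ to a finite subset) and (d)$\Rightarrow$(e) (finitely generated elements are widely generated by \Cref{prop:compactprofinite}).

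For (c)$\Rightarrow$(b), I would suppose $x_i \vee y = z$ for all $i \in I$. The inequality $(\bigwedge_i x_i) \vee y \leq z$ is trivial, so profiniteness reduces equality to checking $\eta((\bigwedge_i x_i) \vee y) = \eta(z)$ for every cofinite congruence $\equiv$. In the finite quotient $Q = L/{\equiv}$ the set $\{\eta(x_i)\}$ is finite and each element satisfies $\eta(x_i) \vee \eta(y) = \eta(z)$, so the join-semidistributivity of $Q$ hypothesized in (c) yields $(\bigwedge_i \eta(x_i)) \vee \eta(y) = \eta(z)$; since $\eta$ preserves meets, this is the desired identity.

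For (a)$\Rightarrow$(d), let $x$ be widely generated and, for each lower cover $y \lessdot x$, put $J_y = \{z \in L : z \vee y = x\}$. By \Cref{lem:joinrepcharacter} it suffices to produce a minimum of $J_y$, and I will show the stronger claim that $J_y$ is closed under arbitrary meets, so that $\bigwedge J_y$ is the required minimum. Given a family $\{z_\alpha\} \subseteq J_y$, profiniteness reduces $(\bigwedge_\alpha z_\alpha) \vee y = x$ to the family of identities $(\bigwedge_\alpha \eta(z_\alpha)) \vee \eta(y) = \eta(x)$, one per cofinite $\equiv$. In each such $Q$ the set $\{\eta(z_\alpha)\}$ is finite, so choose finitely many $z_{\alpha_1}, \ldots, z_{\alpha_k}$ representing every distinct value; finite semidistributivity from (a) applied in $L$ gives $(\bigwedge_i z_{\alpha_i}) \vee y = x$, and applying $\eta$ gives the required quotient identity.

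For (e)$\Rightarrow$(c), fix a cofinite $\equiv$ with quotient $Q$; by \Cref{prop:finitecanonicaljoin} it suffices to exhibit a canonical join representation for each $\bar x \in Q$. Write $x_0 = \pi^{\downarrow}_\equiv(\bar x)$, which is finitely generated and hence has a canonical join representation by (e). Using \Cref{lem:cofinitecovers}, the lower covers of $x_0$ in $L$ are exactly $\pi^{\uparrow}_\equiv(\bar y)$ for $\bar y \lessdot \bar x$, and by \Cref{lem:joinrepcharacter} the canonical join representation of $x_0$ provides, for each $\bar y \lessdot \bar x$, a minimum element $j_{\bar y}$ of $\{z : z \vee \pi^{\uparrow}_\equiv(\bar y) = x_0\}$. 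I claim that $\eta(j_{\bar y})$ is the minimum of $\{\bar z : \bar z \vee \bar y = \bar x\}$ in $Q$: membership is immediate from $\eta(j_{\bar y} \vee \pi^{\uparrow}_\equiv(\bar y)) = \eta(x_0) = \bar x$, and for minimality, given $\bar z$ in this set the lift $z := \pi^{\downarrow}_\equiv(\bar x) \wedge \pi^{\uparrow}_\equiv(\bar z)$ satisfies $\eta(z) = \bar x \wedge \bar z = \bar z$ and $z \vee \pi^{\uparrow}_\equiv(\bar y) = x_0$ (both sides are $\leq x_0$ and have $\eta$-image $\bar x$, so they lie in the class $[\bar x]_\equiv$ and are therefore $\geq x_0$). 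Minimality of $j_{\bar y}$ then gives $j_{\bar y} \leq z$, hence $\eta(j_{\bar y}) \leq \bar z$, and \Cref{lem:joinrepcharacter} assembles these into the canonical join representation of $\bar x$ in $Q$. The main obstacle throughout is the mismatch that $\pi^{\downarrow}_\equiv$ preserves only joins while $\pi^{\uparrow}_\equiv$ preserves only meets, so preimages must be chosen with care to control both operations simultaneously, as in the lift $z := \pi^{\downarrow}_\equiv(\bar x) \wedge \pi^{\uparrow}_\equiv(\bar z)$ above.
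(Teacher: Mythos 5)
Your proof is correct, and it closes a genuinely different implication cycle than the paper does. The paper argues
\((a)\Rightarrow(c)\Rightarrow(b)\Rightarrow(a)\), then \((b)\Rightarrow(d)\Rightarrow(e)\Rightarrow(c)\); the step \((a)\Rightarrow(c)\) is discharged by citing the external fact that complete lattice quotients of semidistributive lattices are semidistributive, and \((b)\Rightarrow(d)\) simply applies \emph{complete} join semidistributivity to the (possibly infinite) set \(J_y=\{z: z\vee y = x\}\). You avoid both of these: your cycle is \((b)\Rightarrow(a)\Rightarrow(d)\Rightarrow(e)\Rightarrow(c)\Rightarrow(b)\), and the new content is the implication \((a)\Rightarrow(d)\), which you establish by showing \(J_y\) is closed under \emph{arbitrary} meets starting from only \emph{finite} join semidistributivity. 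The idea — push the desired identity into each cofinite quotient, where the image of \(J_y\) becomes finite, invoke finite join semidistributivity upstairs in \(L\) on a finite set of representatives, and then recombine using profiniteness — is exactly the kind of bootstrap that the rest of the section uses, so it makes the whole proof more self-contained (no external citation needed) at the cost of a somewhat more intricate argument for that step. Your \((c)\Rightarrow(b)\), \((d)\Rightarrow(e)\), and \((e)\Rightarrow(c)\) match the paper's in substance; in \((e)\Rightarrow(c)\) your explicit lift \(z := \pi^{\downarrow}_{\equiv}(\bar x)\wedge\pi^{\uparrow}_{\equiv}(\bar z)\) is actually a bit more careful than the paper's terse phrasing, making the ``preimages must be chosen with care'' point transparent.
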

\begin{proof}
    (a)$\implies$(c): By \cite[Exercise 9.49]{Reading2016a}, any complete lattice quotient of a semidistributive lattice is semidistributive. 

    (c)$\implies$(b): Let $X\subseteq L$ and $y,z\in L$. If $x\vee y= z$ for all $x\in X$, then for any congruence $\equiv$ we have $x\vee y \equiv z$ for all $x\in X$. If $\equiv$ is a cofinite congruence, then semidistributivity of the finite lattice $L/{\equiv}$ is equivalent to its complete semidistributivity, so we deduce that $(\bigwedge X)\vee y \equiv z$. Since this is true for all cofinite congruences and $L$ is profinite, we conclude that $(\bigwedge X)\vee y = z$. 

    (b)$\implies$(a): Obvious.

    (b)$\implies$(d): Let $x\in L$ be widely generated. By \Cref{lem:joinrepcharacter}, it is enough to show that, for each $y\lessdot x$, the set $Z=\{z\in L \mid z\vee y = x\}$ has a minimum element. Applying complete join semidistributivity, we find that $(\bigwedge Z) \vee y = x$, so the claim follows.
    

    (d)$\implies$(e): Follows from \Cref{prop:compactprofinite}.

    (e)$\implies$(c): Let $\equiv$ be a cofinite congruence and $\eta:L\to L/{\equiv}$ the quotient map. Then for any $x\in L$ so that $\pi^{\downarrow}_\equiv(x)=x$, the finitely generated element $x$ has a canonical join representation. Hence for each $y\lessdot x$, the set $\{z\in L \mid z\vee y = x\}$ has a minimum element $j_y$ by \Cref{lem:joinrepcharacter}. Note that for any $z\in L$ so that $z\leq x$, we have $z\vee y = x$ if and only if $\eta(z) \vee \eta(y) = \eta(x)$. As a result, $\eta(j_y)$ is the minimum element of $\{\eta(z) \mid z\in L,~\eta(z) \vee \eta(y) = \eta(x)\}$. 
    By \Cref{lem:joinrepcharacter}, $L/{\equiv}$ has canonical join representations, so by \Cref{prop:finitecanonicaljoin} it is join semidistributive.

\end{proof}

\begin{aside}
    \begin{remark}
        Without the assumption of profiniteness, it is still the case that every completely semidistributive lattice satisfies (a-e).
    \end{remark}
\end{aside}

We can now prove two-thirds of the theorem announced in the introduction.
\begin{theorem}\label{thm:semidistr}
    The lattices $\WO(\Tot)$ and $\WO(\TTot_n)$ are completely semidistributive.
\end{theorem}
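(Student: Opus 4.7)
My plan is to deduce the theorem as a consequence of the three pillars already in place: profiniteness of both lattices, the canonical join description for widely generated elements, and the equivalences in \Cref{prop:profinitesemidist}. Complete join semidistributivity falls out immediately; for the meet side I will use a self-anti-isomorphism given by reversing the total order.

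For the join half, I would first invoke \Cref{thm:canonicaljoinSinfty} (for $\WO(\Tot)$) and \Cref{thm:canonicaljointSn} (for $\WO(\TTot_n)$), which together say that every widely generated element of either lattice admits a canonical join representation. Since both lattices are profinite, \Cref{prop:profinitesemidist}(d)$\Rightarrow$(b) upgrades this to complete join semidistributivity.

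For the meet half, I would exhibit an order-reversing involution. Define $\iota: \Tot \to \Tot$ by $\iota(\prec) = (\prec^{\mathrm{op}})$, where $a \prec^{\mathrm{op}} b$ iff $b \prec a$. This is clearly an involution on $\Tot$; the key point is that $N(\prec^{\mathrm{op}}) = T \setminus N(\prec)$, so $(\prec_1)\leq (\prec_2)$ iff $N(\prec_1)\subseteq N(\prec_2)$ iff $T\setminus N(\prec_2)\subseteq T\setminus N(\prec_1)$ iff $(\prec_2^{\mathrm{op}})\leq (\prec_1^{\mathrm{op}})$. Thus $\iota$ is an anti-automorphism of the poset $\WO(\Tot)$. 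Moreover, if $(\prec)$ is translation-invariant, then $a\prec^{\mathrm{op}} b \iff b\prec a \iff b+n\prec a+n \iff a+n\prec^{\mathrm{op}} b+n$, so $\iota$ restricts to an anti-automorphism of $\WO(\TTot_n)$ as well. Any anti-automorphism exchanges joins with meets, and therefore turns complete join semidistributivity into complete meet semidistributivity (verbatim after swapping $\vee\leftrightarrow\wedge$ in \Cref{def:semidistributive}). Applying $\iota$ to the conclusion of the join-semidistributive step gives the meet-semidistributive conclusion.

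Combining both halves yields complete semidistributivity of $\WO(\Tot)$ and $\WO(\TTot_n)$, finishing the proof. I do not anticipate a real obstacle: the genuine content has been packaged into \Cref{thm:canonicaljoinSinfty}, \Cref{thm:canonicaljointSn}, and \Cref{prop:profinitesemidist}, and the order-reversal trick for the meet side is elementary. The only point worth checking carefully is that order-reversal behaves correctly on inversion sets and preserves translation invariance, both of which are one-line verifications.
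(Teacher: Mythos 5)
Your proof is correct, and the first half is identical to the paper: invoke \Cref{thm:canonicaljoinSinfty} and \Cref{thm:canonicaljointSn} to get canonical join representations for widely generated elements, then apply \Cref{prop:profinitesemidist}(d)$\Rightarrow$(b). The second half differs slightly in presentation: the paper simply invokes ``the dual versions'' of those results (relying on the footnoted meta-principle that every lower-arc statement has an upper-arc analogue), whereas you make that duality concrete by exhibiting the order-reversing involution $\iota(\prec)=(\prec^{\mathrm{op}})$ with $N(\prec^{\mathrm{op}})=T\setminus N(\prec)$, which restricts to $\WO(\TTot_n)$ and therefore shows each lattice is isomorphic to its own opposite. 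Since complete meet semidistributivity of $L$ is precisely complete join semidistributivity of $L^{\mathrm{op}}$, this self-anti-isomorphism delivers the meet half directly without having to restate and reprove the upper-arc theorems. Substantively the two arguments rest on the same symmetry; your version is marginally more self-contained because it pins down what ``dual version'' means.
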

\begin{proof}
    By \Cref{thm:canonicaljoinSinfty,thm:canonicaljointSn}, every widely generated element of the two lattices has a canonical join representation. Thus \Cref{prop:profinitesemidist} implies the lattices are completely join semidistributive. The dual versions of \Cref{thm:canonicaljoinSinfty,thm:canonicaljointSn,prop:profinitesemidist} imply that the lattices are also completely meet semidistributive.  
\end{proof}



\subsection{Extended weak order}
The following was shown in \cite{Barkley2022}, and is the original motivation for studying $\WO(\TTot_n)$. We use the notation for the root system of $\tS_n$ from \Cref{tSnroots}.

\begin{theorem}\label{thm:TITOtoDyer}
   The map 
   \[ (\prec) \mapsto \{\talpha_{ab} \mid \rind{a,b} \in \tN(\prec), ~ a\not\equiv b \bmod n\} \]
   is a surjective complete lattice homomorphism from $\WO(\TTot_n)\to \Dyer(\tS_n)$. 
\end{theorem}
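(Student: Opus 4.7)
The proof should establish three things about the map $\Phi$: well-definedness (that the image is a biclosed set), surjectivity, and preservation of all joins and meets. For well-definedness, the key observation is that in the root system $\Phi^+_{\tS_n}$, a sum of two positive roots gives another positive root only when the two roots ``chain'' in the reflection-index sense, i.e., $\talpha_{ab}+\talpha_{a'b'}=\talpha_{a'',b''}$ with $c_1=c_2=1$ and $b\equiv a'\pmod n$ (so that after translating one summand by a multiple of $n$, the indices chain as $\talpha_{ab}+\talpha_{bc}=\talpha_{ac}$). Closure of $\Phi(\prec)$ thus reduces to: if $\rind{a,b},\rind{b,c}\in \tN(\prec)$ with $a<b<c$ and $a\not\equiv c\pmod n$, then $\rind{a,c}\in\tN(\prec)$, which is transitivity $c\prec b\prec a \Rightarrow c\prec a$. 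Coclosure is the dual statement.

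For surjectivity, given a biclosed $X\subseteq \Phi^+_{\tS_n}$, I would produce a TITO $(\prec_X)$ with $\Phi(\prec_X)=X$ by first setting $b\prec_X a$ for real pairs $a<b$, $a\not\equiv b\pmod n$ exactly when $\talpha_{ab}\in X$ (biclosedness gives transitivity on real pairs), then extending to imaginary pairs using \Cref{lem:fundTITO}: a block spanning two or more residue classes has its waxing/waning direction forced by the real comparisons between its elements and those in other classes, while a block consisting of a single residue class has its orientation picked by a canonical default such as ``waning''. Translation-invariance is automatic because reflection indices are already translation-equivalence classes of pairs, and $\Phi(\prec_X)=X$ holds by construction.

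For preservation of joins, order-preservation gives the easy inclusion $\bigvee_i \Phi(\prec_i)\subseteq \Phi(\bigvee_i(\prec_i))$. The reverse amounts to the combinatorial claim: every real reflection index in $\overline{\bigcup_i\tN(\prec_i)}=\tN(\bigvee_i(\prec_i))$ lies in the root-system closure of $\bigcup_i \Phi(\prec_i)$. Unrolling the closure on $\tT$, such an index $\rind{a,c}$ is witnessed by a chain $a=b_0<\cdots<b_k=c$ with each edge $\rind{b_{j-1},b_j}\in\bigcup_i\tN(\prec_i)$. The main technical lemma is to refine this chain so every edge is real. When an edge $\rind{b_{j-1},b_{j-1}+mn}$ is imaginary and comes from some $\tN(\prec_{i'})$, translation-invariance of each $\tN(\prec_i)$ allows a neighboring real edge $\rind{b_{j-2},b_{j-1}}\in\tN(\prec_{i''})$ to be replaced by its translate $\rind{b_{j-2}+mn,b_j}\in\tN(\prec_{i''})$, rerouting the chain past the imaginary gap. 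Once an all-real chain is found, iterated closure in $\Phi^+_{\tS_n}$ produces $\talpha_{ac}\in\bigvee_i\Phi(\prec_i)$. Preservation of meets follows via the order-reversing involutions $X\mapsto \Phi^+_{\tS_n}\setminus X$ on $\Dyer(\tS_n)$ and $(\prec)\mapsto(\succ)$ on $\WO(\TTot_n)$, which are intertwined by $\Phi$ and exchange joins with meets.

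The main obstacle is the chain-refinement lemma, specifically when long runs of imaginary edges appear within a single residue class, forcing iterated translations and careful combinatorial bookkeeping. An alternative approach avoiding the direct refinement would exploit profiniteness of both lattices: verify the lattice-homomorphism property on cofinite quotients (where one truncates the set of reflection indices to a finite window and the imaginary/real distinction becomes manageable) and pass to the inverse limit. Either way, everything aside from this lemma is a routine consequence of transitivity of $\prec$, biclosedness of $X$, and the definitions.
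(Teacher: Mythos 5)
This theorem is stated in the paper with only a citation to \cite{Barkley2022} and no in-paper proof, so there is nothing in the paper to compare against; I therefore assess your outline on its own terms. It has a genuine gap in the well-definedness step, and the surjectivity sketch is also problematic in a related way.

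For well-definedness, reducing closedness of the image to transitivity of $\prec$ is incorrect. The paper's notion of \emph{closed} requires stability under all roots $\gamma\in\RR_{\geq0}\alpha+\RR_{\geq0}\beta$, not only $\gamma=\alpha+\beta$. When $\talpha_{a,b}$ and $\talpha_{b,c}$ with $a\equiv c\bmod n$ both lie in the image, they span a rank-two subsystem of type $\widetilde{A}_1$ whose positive cone contains roots that are \emph{not} simple sums: with $a<b<a+n$ one has $\talpha_{a,b}+\talpha_{b,a+n}=\delta\notin\Phi^+_{\tS_n}$, yet $\talpha_{a,b+n}=2\talpha_{a,b}+\talpha_{b,a+n}\in\Phi^+_{\tS_n}$ is in the cone, so closedness demands $\talpha_{a,b+n}$ also be in the image. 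Verifying this uses translation-invariance in an essential way (from $b\prec a$ deduce $b+n\prec a+n$, then combine with $a+n\prec b$ to get $b+n\prec a$); transitivity alone never produces it, and the naive chaining closure of $\{\talpha_{a,b},\talpha_{b,a+n}\}$ is just that set. The same $\widetilde{A}_1$-cone phenomenon is precisely what rescues your chain-refinement lemma for joins (a chain whose residues alternate between two classes has every simple adjacent merge imaginary), and your sketch does not invoke it. On surjectivity, defaulting an isolated residue class to ``waning'' is inconsistent: already for $X=\varnothing$ the real comparisons you impose give $b\prec b+1\prec\cdots\prec b+n$, which contradicts $b+n\prec b$, and in fact the only preimage of $\varnothing$ is the identity TITO, a single waxing block spanning all residue classes. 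Both the partition of $\ZZ/n\ZZ$ into blocks and each block's orientation are determined by $X$, not by a uniform default, and your outline assumes rather than proves that a consistent choice exists --- that is, it assumes the classification of biclosed sets in $\Phi^+_{\tS_n}$, which is the actual substance of the theorem.
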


The property of being completely semidistributive is preserved by complete lattice quotients \cite[Exercise 9.49]{Reading2016a}. Using \Cref{thm:semidistr}, we deduce another piece of our main theorem.

\begin{corollary}
    $\Dyer(\tS_n)$ is completely semidistributive.
\end{corollary}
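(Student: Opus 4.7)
The plan is essentially to appeal to the two preceding theorems together with the general fact, cited immediately before this corollary, that complete semidistributivity is preserved under complete lattice quotients. Specifically, \Cref{thm:TITOtoDyer} exhibits $\Dyer(\tS_n)$ as the image of a surjective complete lattice homomorphism $\eta\colon \WO(\TTot_n)\to \Dyer(\tS_n)$, and \Cref{thm:semidistr} asserts that $\WO(\TTot_n)$ is completely semidistributive; the corollary then follows immediately.

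For completeness I would sketch the passage of complete join semidistributivity through $\eta$ directly. Let $\equiv$ be the kernel congruence of $\eta$. The key observation is that the section $\pi^{\downarrow}_\equiv$ embeds $\Dyer(\tS_n)$ into $\WO(\TTot_n)$ as a subset closed under arbitrary joins (immediate from $\eta$ preserving joins and being surjective). Given $y',z'\in \Dyer(\tS_n)$ and $X'\subseteq \Dyer(\tS_n)$ with $x'\vee y'=z'$ for every $x'\in X'$, I would lift via $\pi^{\downarrow}_\equiv$ to $\tilde y,\tilde z,\tilde X$. Join-closure of the image forces $\tilde x\vee \tilde y=\tilde z$ for each $\tilde x\in \tilde X$. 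Complete join semidistributivity of $\WO(\TTot_n)$ then yields $(\bigwedge \tilde X)\vee \tilde y=\tilde z$ in $\WO(\TTot_n)$; applying $\eta$, which preserves both joins and meets, gives the desired identity $(\bigwedge X')\vee y'=z'$ in $\Dyer(\tS_n)$.

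The meet-semidistributivity case is strictly dual, using $\pi^{\uparrow}_\equiv$ in place of $\pi^{\downarrow}_\equiv$ and the dual of \Cref{thm:semidistr}. I do not anticipate any real obstacle: the only content beyond invoking the previous theorems is the elementary observation that $\pi^{\downarrow}_\equiv(\WO(\TTot_n))$ is closed under joins (and dually for $\pi^{\uparrow}_\equiv$ and meets), which is a formal consequence of $\eta$ being a complete lattice homomorphism.
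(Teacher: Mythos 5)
Your proposal is correct and follows exactly the paper's route: $\Dyer(\tS_n)$ is a complete lattice quotient of $\WO(\TTot_n)$ (\Cref{thm:TITOtoDyer}), $\WO(\TTot_n)$ is completely semidistributive (\Cref{thm:semidistr}), and complete semidistributivity passes to complete lattice quotients, a fact the paper cites to Reading's book rather than proving. Your additional sketch of that last ingredient (lifting via $\pi^{\downarrow}_\equiv$, using that its image is join-closed, applying complete join semidistributivity upstairs, then pushing back down with $\eta$) is sound, though note that join-closure of the image relies on $\pi^{\downarrow}_\equiv$ being order-preserving and not just on $\eta$ preserving joins.
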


We conclude with the last piece of \Cref{thm:intro}.

\begin{theorem}
    $\Dyer(\tS_n)$ is a profinite lattice.
\end{theorem}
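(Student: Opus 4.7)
The plan is to construct a family of cofinite complete lattice congruences on $\Dyer(\tS_n)$ that detects equality, by pushing cofinite congruences on $\WO(\TTot_n)$ forward through the surjective complete lattice homomorphism $\eta: \WO(\TTot_n) \to \Dyer(\tS_n)$ of \Cref{thm:TITOtoDyer}. We cannot merely cite profiniteness of $\WO(\TTot_n)$ and pass to the quotient, since the aside after \Cref{lem:sublatticeprofinite} warns that profiniteness is not generally inherited by complete lattice quotients.

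For each $k \geq 1$, set $\tT_k \coloneqq \{\rind{a,b} \mid b - a \leq k\}$; this is a finite set of reflection indices. By the aside after \Cref{thm:TITOcompact}, the assignment $(\prec) \mapsto \tN(\prec) \cap \tT_k$ is a complete lattice homomorphism from $\WO(\TTot_n)$ onto its (finite) image, so by \Cref{prop:congruence} its kernel $\approx_k$ is a cofinite complete lattice congruence on $\WO(\TTot_n)$. Let $\equiv^\eta$ denote the kernel congruence of $\eta$ (also a complete lattice congruence by \Cref{prop:congruence}), and let $\equiv_k$ be the smallest complete lattice congruence on $\WO(\TTot_n)$ coarsening both $\approx_k$ and $\equiv^\eta$. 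Since $\equiv_k$ coarsens $\approx_k$, its quotient is a further quotient of the finite lattice $\WO(\TTot_n)/{\approx_k}$, so $\equiv_k$ is cofinite. Since $\equiv_k$ also coarsens $\equiv^\eta$, it descends along $\eta$ to a cofinite complete lattice congruence on $\Dyer(\tS_n)$, which I also denote $\equiv_k$.

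To verify that $\{\equiv_k\}_{k \geq 1}$ detects equality, let $B_1 \neq B_2$ be biclosed subsets of $\Phi^+_{\tS_n}$. Some root $\talpha_{a,b}$ with $a < b$ and $a \not\equiv b \bmod n$ lies in their symmetric difference; without loss of generality $\talpha_{a,b} \in B_1 \setminus B_2$. Choose $k \geq b - a$, so that $\rind{a,b} \in \tT_k$. By \Cref{thm:TITOtoDyer}, every TITO preimage of $B_1$ has $\rind{a,b}$ as an inversion while every preimage of $B_2$ does not. The membership of $\rind{a,b}$ in $\tN(\cdot)$ is preserved by $\approx_k$ (since $\rind{a,b} \in \tT_k$) and by $\equiv^\eta$ (since $\rind{a,b}$ is real), hence is preserved along any alternating chain realizing their join. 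Consequently no preimage of $B_1$ is $\equiv_k$-related to any preimage of $B_2$ in $\WO(\TTot_n)$, so $B_1 \not\equiv_k B_2$ in the quotient.

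The main delicate point is the descent step: showing that the coarsening $\approx_k \vee \equiv^\eta$ (realized concretely as the transitive closure of $\approx_k \cup \equiv^\eta$) stays cofinite and induces a well-defined complete lattice congruence on $\Dyer(\tS_n)$. This is essentially formal via \Cref{prop:congruence}, but it is the point where one must use simultaneously the lattice-homomorphism property of $\eta$ and the fact that the filtration pieces $\tT_k$ contain enough real reflection indices to separate any two distinct biclosed sets. The equality-detection conclusion is then a routine consequence of \Cref{thm:TITOtoDyer}.
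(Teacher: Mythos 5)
Your proof is correct and follows the same high-level strategy as the paper: realize $\Dyer(\tS_n)$ as the quotient of $\WO(\TTot_n)$ by $\equiv^\eta$, take a cofinite congruence on $\WO(\TTot_n)$, pass to the finest common coarsening of it with $\equiv^\eta$, note that this is still cofinite and descends to $\Dyer(\tS_n)$, and then verify that the resulting family detects equality of biclosed sets. The two proofs differ in their choice of starting congruences and the flavor of the detection argument. The paper takes the congruences $\equiv_{A,B}$ (TITOs inducing the same order on $[A,B]_<$), observes that the finest coarsening $\equiv'_{A,B}$ of $\equiv_{A,B}$ and $\equiv^\eta$ has a concrete combinatorial description — TITOs agreeing on the order of $[A,B]_<$ up to reversing consecutive subsequences lying in a single residue class — and reads off detection from the fact that such reversals cannot alter the relative order of two integers in \emph{different} residue classes. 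You instead take the congruences $\approx_k$ attached to the filtration pieces $\tT_k = \{\rind{a,b} \mid b-a \le k\}$ from the aside after \Cref{thm:TITOcompact}, and argue detection abstractly: membership of a real reflection index $\rind{a,b} \in \tT_k$ in $\tN(\cdot)$ is invariant under both $\approx_k$ and $\equiv^\eta$, hence under the finest common coarsening. Your abstract argument avoids the explicit description of equivalence classes, which is a nice simplification. One point worth making explicit (in both proofs): the finest \emph{complete} lattice congruence coarsening $\approx_k$ and $\equiv^\eta$ really does coincide with the transitive closure of $\approx_k \cup \equiv^\eta$. This follows because the transitive closure is a (finitary) lattice congruence by the standard universal-algebra fact, it coarsens the cofinite $\approx_k$ and hence induces a congruence on the \emph{finite} quotient $\WO(\TTot_n)/{\approx_k}$, and therefore it is the kernel of a composition of complete lattice homomorphisms — making it a complete lattice congruence by \Cref{prop:congruence}(b). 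Your parenthetical ``realized concretely as the transitive closure'' is therefore justified, but deserves this one-line explanation.
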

\begin{proof}
We view $\Dyer(\tS_n)$ as the quotient lattice of $\WO(\TTot_n)$ by the congruence which puts $(\prec_1)\equiv(\prec_2)$ if the real reflection indices in $\tN(\prec_1)$ and $\tN(\prec_2)$ are the same. Two TITOs are equivalent if and only if they differ by reversing blocks of size 1. Each congruence on $\WO(\TTot_n)$ coarsening $\equiv$ corresponds to a congruence of $\Dyer(\tS_n)$. Let $\equiv'_{A,B}$ be the finest lattice congruence coarsening both $\equiv$ and the congruence $\equiv_{A,B}$ introduced above. Then $\equiv'_{A,B}$ is cofinite. We wish to check that the congruences $\equiv'_{A,B}$ detect equality in $\Dyer(\tS_n)$. Since equivalence classes for $\equiv_{A,B}$ correspond to TITOs which order $[A,B]_{<}$ the same way, equivalence classes for $\equiv_{A,B}'$ correspond to TITOs which order $[A,B]_{<}$ the same way up to reversing consecutive subsequences in the same residue class modulo $n$. Given two TITOs $(\prec_1),(\prec_2)$ with $(\prec_1)\not\equiv (\prec_2)$, there exists integers $a,b$ with $a\not\equiv b\bmod n$ so that $a\prec_1 b$ and $b\prec_2 a$. Setting $A=\min\{a,b\}$ and $B=\max\{a,b\}$, we find that $(\prec_1)\not\equiv_{A,B}' (\prec_2)$, since reversing consecutive subsequences in a single residue class can never change the order of two elements in different residue classes. Hence, the family $(\equiv_{A,B}')_{A<B}$ detects equality on $\Dyer(\tS_n)$.
\end{proof}

\section{Acknowledgments}
I was partially supported by NSF Grant DMS-1854512. I would like to thank Colin Defant, Arnau Padrol, Vincent Pilaud, Nathan Reading, David Speyer, Hugh Thomas, and Lauren Williams for helpful conversations related to this work.


\bibliographystyle{plain}
\bibliography{main}

\end{document}